\numberwithin{equation}{section}
\newcommand{\N}{\mathbb{N}}
\newcommand{\R}{\mathbb{R}}
\renewcommand{\H}{{\rm H}}
\renewcommand{\d}{{\mathrm d}}
\newcommand{\e}{{\rm e}}
\newcommand{\restr}[1]{\lower3pt\hbox{\(|_{#1}\)}}
\newcommand{\nchi}{{\raise.3ex\hbox{\(\chi\)}}}
\newcommand{\Der}{{\rm Der}}
\newcommand{\1}{\mathbbm 1}
\newcommand{\fr}{\penalty-20\null\hfill\(\blacksquare\)}
\newcommand{\mm}{\mathfrak{m}}
\newcommand{\X}{{\rm X}}
\newcommand{\dist}{{\rm dist}}
\newcommand{\ppi}{\boldsymbol{\pi}}
\newcommand{\LIP}{{\rm LIP}}
\newcommand{\Lip}{{\rm Lip}}
\renewcommand{\div}{{\rm div}}
\newcommand{\md}{\operatorname{md}}
\newcommand{\ud}{\operatorname{d}}
\newcommand{\inv}{^{-1}}
\newtheorem{theorem}{Theorem}[section]
\newtheorem{corollary}[theorem]{Corollary}
\newtheorem{lemma}[theorem]{Lemma}
\newtheorem{proposition}[theorem]{Proposition}
\newtheorem{definition}[theorem]{Definition}
\newtheorem{remark}[theorem]{Remark}
\title[Spaces with Riemannian curvature bounds are UIH]{Spaces with Riemannian curvature bounds \\ are universally infinitesimally Hilbertian}
\author{Jes\'{u}s N\'{u}\~{n}ez-Zimbr\'{o}n}
\address{Facultad de Ciencias, UNAM, Mexico}
\email{nunez-zimbron@ciencias.unam.mx}
\author{Enrico Pasqualetto}
\address{Department of Mathematics and Statistics,
P.O.\ Box 35 (MaD), FI-40014 University of Jyvaskyla}
\email{enrico.e.pasqualetto@jyu.fi}
\author{Elefterios Soultanis}
\address{Department of Mathematics and Statistics,
P.O.\ Box 35 (MaD), FI-40014 University of Jyvaskyla}
\email{elefterios.e.soultanis@jyu.fi}
\begin{document}
\date{\today} 
\keywords{Sobolev spaces, infinitesimal Hilbertianity, Hilbert spaces, metric geometry, Alexandrov spaces, RCD-spaces, Gromov--Hausdorff tangents, splitting theorems}
\subjclass[2020]{46E36, 51F99, 49J52, 53C23}

\begin{abstract}
We show that a metric space $X$ that, at every point, has a Gromov--Hausdorff tangent with the splitting property (i.e.\ every geodesic line splits off a factor $\R$), is universally infinitesimally Hilbertian (i.e.\ $W^{1,2}(X,\mu)$ is a Hilbert space for every measure $\mu$). This connects the infinitesimal geometry of $X$ to its analytic properties and is, to our knowledge, the first general criterion guaranteeing universal infinitesimal Hilbertianity. Using it we establish universal infinitesimal Hilbertianity of finite dimensional RCD-spaces. We moreover show that (possibly infinite dimensional) Alexandrov spaces are universally infinitesimally Hilbertian and construct an isometric embedding of tangent modules.
\end{abstract}
\maketitle
\section{Introduction}
A metric measure space $(X,\mu)$ is said to be \emph{infinitesimally Hilbertian} if the Sobolev space $W^{1,2}(X,\mu)$
is a Hilbert space (see Section \ref{sec:sobolev} for the precise definitions). Infinitesimal Hilbertianity was introduced by Gigli in \cite{Gig:15} and is a crucial part of the definition of spaces with Riemannian curvature bounds (RCD-spaces), see e.g.\ \cite{Amb:Gig:Sav:14-2,Amb:Gig:Sav:14}. It has since become of independent interest. One reason for this is that it connects Sobolev spaces over metric spaces to the theory of Dirichlet forms, which has an extensive theory with a myriad of applications e.g.\ in analysis, probability and the theory of fractals. This connection was used, for example, in \cite{EB-raj-sou24} to make progress on the tensorization problem for Sobolev spaces. 

Infinitesimal Hilbertianity also arises in the analysis of \(2\)-Wasserstein spaces \cite{Fo:Sa:So:23} and of spaces of non-negative measures endowed with the Hellinger--Kantorovich distance \cite{DS:So:25}. For example, in topological data-analysis, the space of persistence diagrams equipped with the Wasserstein distance is an Alexandrov space \cite{Tur:Mil:Muk:Har:14} (and thus falls within the scope of Corollary \ref{cor:rcd-alex-inf-hilb} below). We refer the reader e.g.\ to \cite{Fo:He:So:2025,DS:So:25} for a detailed account of some of the many more (analytic, geometric and computational) motivations and applications of analysis on Wasserstein spaces. Another concept related to infinitesimal Hilbertianity, property (ET) (Euclidean tangents) \cite{Lyt:Wen:17}, arises in connection with the Plateau-problem in metric spaces; solutions to the Plateau-problem in spaces with property (ET) are weakly conformal.

In these and other contexts, there is (arguably) no natural choice of measure on the metric space for which infinitesimal Hilbertianity is clear. Thus the question arises: for which metric spaces $X$ is $(X,\mu)$ infinitesimally Hilbertian \emph{for any measure $\mu$}? Such metric spaces are called \emph{universally infinitesimally Hilbertian}.

\begin{definition}\label{def:uih}
A complete metric space $X$ is said to be \emph{universally infinitesimally Hilbertian (UIH)} if $W^{1,2}(X,\mu)$ is a Hilbert space for any Radon measure $\mu$ on $X$
that is \emph{boundedly finite} (i.e.\ finite on bounded sets).
\end{definition}
So far, all known examples arise from very precise knowledge of the metric geometry of the space: Euclidean spaces \cite{Gig:Pas:19}, Riemannian manifolds \cite{Lu:Pa:20}
and Wasserstein spaces over them \cite{Fo:Sa:So:23}, CAT($\kappa$)-spaces \cite{DiMar:Gig:Pas:Sou:20}, and sub-Riemannian manifolds \cite{LeDo:Lu:Pa:23}; see Remark \ref{rmk:examples_UIH}
for more details.
Notably, universal infinitesimal Hilbertianity is not known in the literature for nonsmooth spaces with lower curvature bounds. While for sectional (Alexandrov) curvature lower bounds UIH is expected, for RCD-spaces UIH is, at first glance, perhaps surprising since the RCD condition involves a specific reference measure and there is no a priori reason why $W^{1,2}(X,\mu)$ should be a Hilbert space for other measures $\mu$. Nevertheless, in this paper we establish UIH in both cases through a more general principle. Namely, we show that if blow-ups of Lipschitz curves split off an $\R$-factor, then the space is universally infinitesimally Hilbertian.

\begin{theorem}\label{thm:inf-split-implies-inf-hilb}
Suppose a complete metric space $X$ is weakly infinitesimally curve-splitting. Then $X$ is universally infinitesimally Hilbertian.
\end{theorem}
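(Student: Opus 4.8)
The plan is to reduce the Hilbertianity of $W^{1,2}(X,\mu)$, for an arbitrary boundedly finite Radon measure $\mu$, to a pointwise parallelogram inequality for minimal weak upper gradients of Lipschitz functions, and then to verify that inequality $\mu$-a.e.\ by a blow-up at the point, exploiting the Hilbertian structure that the splitting property forces on the tangent. For the first step: fixing $\mu$ and replacing $X$ by $\spt\mu$ we may assume $X$ separable; $W^{1,2}(X,\mu)$ is a Hilbert space exactly when the Cheeger energy $\Ch_\mu$ satisfies the parallelogram identity, and by a standard reduction --- using the density of Lipschitz functions in energy and the $L^2$-lower semicontinuity and locality of minimal weak upper gradients --- it suffices to prove
\[
|\nabla(f+g)|_\mu^2+|\nabla(f-g)|_\mu^2\le 2|\nabla f|_\mu^2+2|\nabla g|_\mu^2 \qquad \mu\text{-a.e.}
\]
for all Lipschitz $f,g$ with bounded support, where $|\nabla\,\cdot\,|_\mu$ denotes the minimal $2$-weak upper gradient on $(X,\mu)$; indeed, integrating this inequality and then applying it with $(f,g)$ replaced by $(f+g,f-g)$ forces equality, hence the full parallelogram identity for $\Ch_\mu$.

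For the pointwise inequality I would argue at $\mu$-a.e.\ $x\in X$. Fix such an $x$ and a Gromov--Hausdorff tangent $Y$ of $X$ at $x$ with the splitting property (provided by the hypothesis). Iterating the splitting along a maximal system of independent geodesic lines through the base point yields an isometric decomposition $Y\cong H\times Z$, with the $\ell^2$-product metric, where $H$ is a separable Hilbert space and $Z$ contains no geodesic line. The aim is to show that the minimal weak upper gradient at $x$ is ``carried by the $H$-directions'': concretely, that there are a closed subspace $H_\mu(x)\subseteq H$ and an assignment $h\mapsto\d_x h\in H_\mu(x)$, linear in $h$, with $|\d_x h|_H=|\nabla h|_\mu(x)$ for every Lipschitz $h$; the parallelogram identity in $H$ then gives the displayed inequality (indeed equality) at $x$. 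Equivalently, one constructs an $L^\infty$-linear isometric embedding of the cotangent module $L^2(T^*X,\mu)$ into the Hilbert module of $L^2$-sections $x\mapsto v_x\in H_\mu(x)$. One of the two bounds is essentially automatic: the asymptotic Lipschitz constant $\Lip_a h(x):=\lim_{r\downarrow 0}\Lip(h;B_r(x))$ is a weak upper gradient, so $|\nabla h|_\mu(x)\le\Lip_a h(x)$, and $\Lip_a h(x)$ is the Lipschitz constant of any blow-up of $h$ on $Y$. The content is the reverse bound: at $\mu$-a.e.\ $x$ one must produce enough test plans on $(X,\mu)$ whose curves, rescaled by the blow-up sequence, converge to geodesics of $Y$ running in the $H$-directions and along which $h$ varies at the full rate $|\nabla h|_\mu(x)$, so that $|\nabla h|_\mu(x)$ is actually realised as a directional-derivative supremum over the Hilbert factor.

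The main obstacle is exactly this last construction: transferring the Hilbertian structure of the $H$-factor of the tangent down to the minimal weak upper gradient at the point, for a measure $\mu$ that may be neither doubling nor asymptotically Euclidean. Two points require care. First, one cannot in general blow up $\mu$ itself to a measure on $Y$, so the passage from test plans on $(X,\mu)$ to curves in $Y$ must be carried out without reference to any limit measure --- for instance through a compactness and selection argument for families of curves on which $h$ has controlled $2$-energy, together with the cone structure of $Y$. Second, a generic Lipschitz $h$ has no reason to blow up to a function that is linear on the $H$-factor, so the linearity of $h\mapsto\d_x h$ cannot be read off from blow-ups of $h$ directly; it must be extracted from the relaxation built into $|\nabla h|_\mu$, presumably by first replacing $h$ near $x$ with first-order model functions --- difference quotients of distance functions, or Busemann functions of the lines spanning $H$ --- and showing that their differentials already generate the cotangent module fibrewise. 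This is the step that uses the splitting hypothesis in full strength, and it is the reason the statement is not a formal consequence of the one-dimensional parallelogram identity trivially satisfied by the derivatives of $f$ and $g$ along any single curve.
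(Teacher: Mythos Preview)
Your overall plan---reduce to a pointwise parallelogram inequality for Lipschitz $f,g$, then verify it $\mu$-a.e.\ by blowing up at $x$ and using the Hilbert factor of the tangent---is exactly the paper's strategy, and your identification of the two bounds (the easy one via $\Lip_a$ and the hard one via test plans) is accurate. The lower bound is indeed handled, as you anticipate, by a $2$-plan representing the minimal gradient (the paper cites the curvewise characterisation of Eriksson-Bique--Soultanis), and the curves in that plan blow up to lines in $Y$, which are forced into the Hilbert factor $H$ because the other factor $Z$ contains no lines.

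Where you diverge from the paper is in the target you set yourself. You aim for an equality $|\nabla h|_\mu(x)=|\d_x h|_H$ for \emph{every} Lipschitz $h$, and hence for an isometric embedding of the full cotangent module into sections of $H$. This is stronger than what is needed and than what the paper proves; the paper does not attempt it. Instead, the paper works with the \emph{pair} $(f,g)$ simultaneously: set $\varphi=(f,g)\colon X\to\R^2$ and blow it up to $\psi\colon Y\to\R^2$. The key lemma is that $\psi$ restricted to $H$ is \emph{linear}---this is proved directly from the splitting property, by observing that the blow-ups of curves through $x$ are lines in $H$ along which $\psi$ is linear, and then translating by any $v\in H$ (an isometry of $Y$) to see that $\psi(z_0,v+tc)=\psi(z_0,v)+t\psi(z_0,c)$. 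No model functions or Busemann functions are needed; the linearity comes straight from translation along lines. One then obtains, for every linear $\xi\colon\R^2\to\R$, the sandwich
\[
|D(\xi\circ\varphi)|(x)\le\|\xi\circ\psi_H\|_{\mathrm{op}}\le\Lip_a(\xi\circ\varphi)(x),
\]
and since $\|\cdot\|_{\mathrm{op}}$ on $H^*$ satisfies the parallelogram law, this gives $|D(f+g)|^2+|D(f-g)|^2\le 2(\Lip_a f)^2+2(\Lip_a g)^2$ pointwise. Note the right-hand side involves $\Lip_a$, not $|\nabla\cdot|_\mu$; the passage to $|\nabla\cdot|_\mu$ on the right happens only \emph{after} integrating, via energy density of Lipschitz functions. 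So your proposed detour through model functions, and your worry about linearity of $h\mapsto\d_x h$, are both bypassed by bundling $f,g$ together and settling for the weaker sandwich with $\Lip_a$ on the right.
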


A metric space $X$ is said to be \emph{weakly infinitesimally curve-splitting} if, for every $x\in X$, there exists $(Y_x,o_x)\in\operatorname{Tan}(X,x)$ satisfying the splitting property for blow-ups of curves: if $\gamma$ is a Lipschitz curve then for a.e. $t$, the blow-up $\gamma_\infty:\R\to Y_{\gamma(t)}$ of $\gamma$ at $t$ is geodesic splitting line, i.e. there exists a metric space $Y'$ and an isometric homeomorphism $\iota:Y\to \R\times Y'$ such that $P_{Y'}(\iota(\gamma_\infty(\R)))$ is a singleton.
Here, $\operatorname{Tan}(X,x)$ denotes the collection of pointed Gromov--Hausdorff tangents and ultratangents to $X$ at $x$ (see Section \ref{sec:GH}), whereas $P_{Y'}\colon\R\times Y'\to Y'$ is the projection to $Y'$. See Section \ref{sec:inf-split}.

\begin{remark}\label{rmk:ET}
{\rm
A closely related condition, namely that any $2$-plane contained in an ultratangent of $X$ is Euclidean, is known to imply property (ET), see \cite[Definition 11.1 and Proposition 11.2]{Lyt:Wen:17}. In fact, (ET) turns out to be implied by universal infinitesimal Hilbertianity, and thus also by weak infinitesimal curve-splitting as formulated in this paper, see Theorem \ref{thm:UIH-implies-ET}.
\fr}\end{remark}

Theorem \ref{thm:inf-split-implies-inf-hilb} yields universal infinitesimal Hilbertianity for a large class of metric spaces including, but not limited to, spaces with Riemannian lower curvature bounds (RCD-spaces). However, since Gigli's splitting theorem fails for RCD($0,\infty$)-spaces, the infinite dimensional case is not covered by the theorem. The situation with Alexandrov spaces (that is, length metric spaces of sectional curvature bounded below in the sense of geodesic triangle comparison) is slightly more subtle: while the splitting theorem is known for Alexandrov spaces with curvature $\ge 0$, the tangent cone of an infinite dimensional Alexandrov space, while satisfying the triangle comparison condition, is not necessarily a length space (see the work of Halbeisen \cite{Halb00} for an example of this type). 

However, any ultratangent of an Alexandrov space is an Alexandrov space with curvature $\ge 0$, so that Theorem \ref{thm:inf-split-implies-inf-hilb} covers infinite dimensional Alexandrov spaces as well. This subtlety highlights the different uses of Gromov--Hausdorff and ultratangents.

\begin{corollary}\label{cor:rcd-alex-inf-hilb}
If \((X,d,\mathfrak m)\) is an ${\rm RCD}(K,N)$-space for some $N\in [1,\infty)$ and $K\in\R$, or if \((X,d)\) is an Alexandrov space, then $(X,d)$ is universally infinitesimally Hilbertian.
%In particular, every finite-dimensional Alexandrov space is universally infinitesimally Hilbertian.
\end{corollary}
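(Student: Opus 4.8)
The plan is to check that every RCD$(K,N)$-space with $N<\infty$ has the \emph{weak infinitesimal splitting property} and then invoke Theorem \ref{thm:inf-split-implies-inf-hilb}. \textbf{Step 1 (tangents are RCD$(0,N)$).} Fix $x\in X$ and any $(Y,o)\in\operatorname{Tan}_{GH}(X,x)$, realized by a sequence $r_i\downarrow 0$ with $(X,d/r_i,x)\to(Y,o)$ in the pointed Gromov--Hausdorff sense. Each rescaled space $(X,d/r_i,\mathfrak m/\mathfrak m(B_{r_i}(x)))$ is an RCD$(Kr_i^2,N)$-space, and the Bishop--Gromov inequality makes the normalized measures uniformly locally doubling near $x$; hence Gromov precompactness yields, along a subsequence, pointed \emph{measured} Gromov--Hausdorff convergence $(X,d/r_i,\mathfrak m/\mathfrak m(B_{r_i}(x)),x)\to(Z,\nu,o')$ with $\nu$ a non-degenerate boundedly finite Radon measure. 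Since pointed GH limits of proper spaces are unique up to pointed isometry, $(Z,o')$ is isometric to $(Y,o)$, so we may regard $\nu$ as a measure on $Y$; and since $Kr_i^2\to 0$, stability of the RCD condition under pointed measured Gromov--Hausdorff convergence shows that $(Y,\nu)$ is an RCD$(0,N)$-space.

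\textbf{Step 2 (apply the splitting theorem).} Let $\ell\subset Y$ be any geodesic line (note $Y$ is geodesic, being a proper length space). By Gigli's splitting theorem for RCD$(0,N)$-spaces with $N\in[1,\infty)$, there are an RCD$(0,N-1)$-space $(Y',\nu')$ — reduced to a point when $N<2$ — and a measure-preserving isometry $\iota\colon(Y,\nu)\to(\R\times Y',\mathcal L^1\times\nu')$ carrying $\ell$ onto $\R\times\{y_0'\}$ for some $y_0'\in Y'$. In particular $\iota$ is an isometric homeomorphism and $P_{Y'}(\iota(\ell))=\{y_0'\}$ is a singleton, so $(Y,o)$ satisfies the splitting property. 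As $x\in X$ was arbitrary, $X$ has the weak infinitesimal splitting property, and Theorem \ref{thm:inf-split-implies-inf-hilb} gives that $(X,d)$ is universally infinitesimally Hilbertian.

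\textbf{Step 3 (the Alexandrov case).} A finite-dimensional Alexandrov space of curvature bounded below, with Hausdorff dimension $n\in\N$ and equipped with its $n$-dimensional Hausdorff measure, is an RCD$(K,n)$-space for a suitable $K\in\R$ (Petrunin; Zhang--Zhu), so the first part applies directly. Alternatively one can bypass RCD theory: the tangent cone at any point of a finite-dimensional Alexandrov space is the Euclidean cone over the compact space of directions, which is again Alexandrov of curvature $\ge 0$, and the Toponogov splitting theorem shows that this GH-tangent has the splitting property, whence Theorem \ref{thm:inf-split-implies-inf-hilb} applies.

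The delicate point is Step 1: one must verify that the prescribed GH-tangent is genuinely the underlying metric space of a \emph{non-degenerate} pmGH-tangent, so that the stability/compactness machinery for RCD spaces can be brought to bear. This is exactly where finite dimensionality is indispensable — through Bishop--Gromov it provides the uniform doubling bound that simultaneously keeps the limit measure from vanishing and forces properness (hence uniqueness of GH limits and the geodesic property of $Y$). For RCD$(K,\infty)$, and for infinite-dimensional Alexandrov spaces whose tangent cones need not even be geodesic, both this argument and the splitting theorem itself fail, which is why Corollary \ref{cor:rcd-alex-inf-hilb} is confined to the finite-dimensional regime.
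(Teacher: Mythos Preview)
Your proof is correct and follows essentially the same route as the paper: rescale and use stability of the RCD condition to see that every GH-tangent of an RCD$(K,N)$-space carries an RCD$(0,N)$ structure, apply Gigli's splitting theorem, and conclude via Theorem~\ref{thm:inf-split-implies-inf-hilb}; the Alexandrov case then follows from Petrunin and Zhang--Zhu. The only minor differences are cosmetic: you spell out the upgrade from GH- to pmGH-tangent (Bishop--Gromov precompactness and uniqueness of proper GH-limits) where the paper simply cites \cite{Brue:Pas:Sem:21}, and your argument in fact establishes the \emph{strong} infinitesimal splitting property (as the paper notes), not merely the weak one you claim---though of course either suffices.
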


We do not know whether RCD($K,\infty$)-spaces are universally infinitesimally Hilbertian. However, in the case of Alexandrov spaces, we can say slightly more about the structure of Gigli's $L^2$-tangent module by constructing an isometric embedding of $L^2(TX)$ into a \emph{geometric} tangent bundle $\Gamma_\mu(HX)$ -- for any measure $\mu$ -- in the spirit of \cite{DiMar:Gig:Pas:Sou:20}. This different technique is interesting in its own right and also implies Corollary \ref{cor:rcd-alex-inf-hilb} for Alexandrov spaces. We refer to Section \ref{sec:alex} for the precise definitions.

\begin{theorem}\label{thm:alex-geom-bundle}
Let $(X,d)$ be an Alexandrov space and $\mu$ a boundedly-finite Radon measure on $X$.
Then there exists an \(L^\infty(\mu)\)-linear embedding map 
\[
\iota\colon L^2(TX)\hookrightarrow\Gamma_\mu(HX)
\]
satisfying \(|\iota(v)|=|v|\) for every \(v\in L^2(TX)\).
\end{theorem}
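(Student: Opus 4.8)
The plan is to construct $\iota$ by duality, leveraging that $\Gamma_\mu(HX)$ is (by construction) an $L^\infty(\mu)$-Hilbert module equipped with a geometric differential, and to reduce the whole statement to a single pointwise identity matching the geometric norm of $\d_H f$ with the minimal weak upper gradient of $f$.

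I would first fix a countable class $\mathcal D$ of functions that is rich enough on both sides — bounded Lipschitz functions, or finite combinations of distance functions if that is what the geometric calculus requires — chosen so that $\{\d f:f\in\mathcal D\}$ generates $L^2(T^*X)$ as an $L^\infty(\mu)$-module and so that the geometric differential $\d_H f\in\Gamma_\mu(HX)$ (built fibrewise from Petrunin's differential of semiconcave functions on the Alexandrov tangent cones) is defined for $f\in\mathcal D$; let $\mathcal G\subseteq\Gamma_\mu(HX)$ be the closed submodule generated by $\{\d_H f:f\in\mathcal D\}$. Given $v\in L^2(TX)=L^2(T^*X)^*$, I would then \emph{define} $\iota(v)\in\mathcal G\subseteq\Gamma_\mu(HX)$ by prescribing its inner products with the generators,
\[
\langle\iota(v),\d_H f\rangle:=\langle\d f,v\rangle\qquad(f\in\mathcal D),
\]
the point being that this is consistent and $L^0(\mu)$-bounded exactly when the pointwise identity $|\d_H f|=|\d f|$ holds $\mu$-a.e.; as $\mathcal G$ is a closed submodule of the Hilbert module $\Gamma_\mu(HX)$, it is orthogonally complemented and self-dual, so $\iota(v)$ is well defined, and a Riesz-type computation of its pointwise norm — using the module structure and that the $\d f$ generate $L^2(T^*X)$ — gives $|\iota(v)|=|v|$ $\mu$-a.e. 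The map $v\mapsto\iota(v)$ is then $L^\infty(\mu)$-linear, and injective because it preserves pointwise norms; moreover, by the module structure of $L^2(T^*X)$ and $\Gamma_\mu(HX)$, the whole theorem is reduced to the identity
\[
|\d_H f|(x)=|\d f|(x)=|{\rm D}f|(x)\qquad\text{for }\mu\text{-a.e.\ }x\text{ and every }f\in\mathcal D,
\]
where $|{\rm D}f|$ denotes the minimal $2$-weak upper gradient. (As a byproduct $\iota$ identifies $L^2(TX)$ with $L^2(T^*X)$, i.e.\ $W^{1,2}(X,\mu)$ is a Hilbert space, which is the reason for the theorem.)

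To establish the identity I would use that both $\Gamma_\mu(HX)$ and $|{\rm D}f|$ are controlled through curves. On the one hand, $\d_H f$ obeys the chain rule $\tfrac{\d}{\d t}f(\gamma_t)=\langle\d_H f,\gamma'_t\rangle$ for $\ppi$-a.e.\ curve $\gamma$ of every test plan $\ppi$, where $\gamma'_t$ is the metric velocity of $\gamma$ realised as a vector of the geometric tangent bundle (so $|\gamma'_t|=|\dot\gamma_t|$); integrating $\bigl|\tfrac{\d}{\d t}f(\gamma_t)\bigr|\le|\d_H f|(\gamma_t)\,|\dot\gamma_t|$ shows that $|\d_H f|$ is a $2$-weak upper gradient, whence $|\d_H f|\ge|{\rm D}f|$ $\mu$-a.e. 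On the other hand, by the very construction of $\Gamma_\mu(HX)$ its pointwise norm is a $\mu$-essential supremum over the velocity fields of test plans (this $\mu$-dependence being exactly why $\Gamma_\mu(HX)$, rather than a single fixed tangent bundle, appears), so $|\d_H f|(x)$ is the supremum of the ratios $\tfrac{\d}{\d t}f(\gamma_t)/|\dot\gamma_t|$ over such velocities at $x$; since $|{\rm D}f|$ is a weak upper gradient, each such ratio is $\le|{\rm D}f|(\gamma_t)$ for $\ppi$-a.e.\ $\gamma$ and a.e.\ $t$, and passing to the $\mu$-essential supremum over countably many test plans yields $|\d_H f|\le|{\rm D}f|$ $\mu$-a.e. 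This closes the identity, and hence the proof.

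The main obstacle is the chain rule underpinning the last step: one must show that for every test plan $\ppi$ and $\ppi$-a.e.\ curve $\gamma$ the map $t\mapsto f(\gamma_t)$ is a.e.\ differentiable with derivative $\langle\d_H f,\gamma'_t\rangle$ for a genuine velocity $\gamma'_t$ of the geometric tangent bundle, and that such velocities generate $\Gamma_\mu(HX)$ in the pointwise sense. This is where the Alexandrov geometry of $X$ is indispensable, and where the generality of the hypotheses bites: it rests on Petrunin's theory of gradients and of differentials of semiconcave (or DC) functions on Alexandrov spaces, on the fact that the absolutely continuous curves appearing in test plans admit velocity vectors in the tangent cones for a.e.\ time, and on the compatibility of these notions with the $L^2$-structure defining $\Gamma_\mu(HX)$ — and, crucially, none of this may use any regularity of $\mu$ (in particular not an a.e.\ Euclidean, or even length, tangent cone), unlike the finite-dimensional setting behind Corollary~\ref{cor:rcd-alex-inf-hilb}.
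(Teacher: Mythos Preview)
Your reduction to the pointwise identity \(|\d_H f|=|Df|\) cannot work: the left-hand side, if \(\d_H f\) means the section \(x\mapsto\nabla_x^H f\) built from Petrunin's differential, is a purely geometric quantity determined by the metric of \(X\) alone, whereas the minimal weak upper gradient \(|Df|\) depends on \(\mu\). For a concrete failure, take \(\mu\) supported on a single geodesic (or on a point): then \(|Df|\) sees only the directional derivative along that geodesic (or vanishes), while \(|\nabla_x^H f|\) can be strictly larger. Your argument for the inequality \(|\d_H f|\le|Df|\) rests on the claim that ``by the very construction of \(\Gamma_\mu(HX)\) its pointwise norm is a \(\mu\)-essential supremum over the velocity fields of test plans'', but this is not how \(\Gamma_\mu(HX)\) is defined in the paper: its fibres are the full linear subcones \(H_xX\) and the pointwise norm is the Hilbert norm on \(H_xX\), with \(\mu\) entering only through the \(L^2(\mu)\)-integrability of sections. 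So only the inequality \(|\d_H f|\ge|Df|\) (via the chain rule along curves, which is Lemma~\ref{lem:nabla_f_dot_gamma}) survives. A secondary issue is that \(f\mapsto\nabla^H f\) is not known to be linear (the paper in fact remarks that even \(\d_p f|_{H_pX}\) need not be linear), so well-definedness of your \(\iota(v)\) via \(\langle\iota(v),\d_H f\rangle=\langle\d f,v\rangle\) is not clear even before the norm identity.

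The paper circumvents both problems by working on the tangent side from the start: given a derivation \(b\in\Der^2_2(X,\mu)\) with \(|b|\) of bounded support, it invokes the Paolini--Stepanov superposition principle to write \(b\) as an integral over Lipschitz curves, and then \emph{defines} \(v_b(x)\in H_xX\) as the (disintegrated) average of the curve velocities \(\gamma^+_t\), which automatically lie in \(H_{\gamma_t}X\) by Lemma~\ref{lem:nabla_f_dot_gamma}. The chain rule gives \(b(\dist_y)=\langle\nabla^H\dist_y,v_b\rangle\) for each \(y\), and the bound \(|v_b|\le|b|\) is immediate from averaging; the reverse inequality comes from the fact (equation~\eqref{eq:dist_y_norming}) that distance functions are norming for derivations. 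This already ``sees'' \(\mu\) through \(b\), so no spurious geometric excess appears. The map \(b\mapsto v_b\) then yields the parallelogram law for \(\Der^2_2(X,\mu)\), hence infinitesimal Hilbertianity, after which \(L^2_{\Lip}(TX)\cong L^2(TX)\) and the extension of \(b\mapsto v_b\) is the desired \(\iota\).
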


If \((X,d)\) is a compact Alexandrov space with non-negative sectional curvature, then the \(2\)-Wasserstein space \((\mathscr P(X),W_2)\) over \(X\)
is also a non-negatively curved Alexandrov space (and the converse implications holds as well, see \cite{Stu:06I}); consequently, it follows directly from Corollary \ref{cor:rcd-alex-inf-hilb}
that \((\mathscr P(X),W_2)\) is universally infinitesimally Hilbertian. If instead \((X,d)\) is a compact Alexandrov space with \({\rm Sect}\geq\kappa\) for some \(\kappa<0\),
but it does not hold that \({\rm Sect}\geq 0\), then \((\mathscr P(X),W_2)\) is \emph{not} an Alexandrov space (again, by \cite{Stu:06I}). However, even in this case one has
that \((\mathscr P(X),W_2)\) has `Euclidean tangent cones' in a suitable sense (see e.g.\ \cite{Oh:09,Gig:11,Gig:Oh:12}). For this reason we expect that \((\mathscr P(X),W_2)\)
is universally infinitesimally Hilbertian for \emph{any} compact Alexandrov space $X$, but in this paper we do not investigate further in this direction.
\subsection*{Acknowledgements}
We would like to thank Alexander Lytchak and Sylvester Eriksson-Bique for invaluable discussions regarding infinitesimal splitting. We are also grateful to Nicola Gigli for sharing an example of the failure of the splitting theorem for RCD($0,\infty$)-spaces, and to Nicola Cavallucci for useful comments on an earlier draft. 

J.N.-Z.\ was financially supported by PAPIIT-UNAM  IA103925. E.P.\ was financially supported by the Research Council of Finland grant no.\ 362898, and E.S. by the Research Council of Finland
grant no.\ 355122.  E.S.\ also gratefully acknowledges the hospitality of the special trimester on Metric Analysis funded by DFG under Germany's Excellence Strategy -EXC-2047/1- 390685813.

\section{Preliminaries}
\subsection{Sobolev spaces and derivations}\label{sec:sobolev}
In this paper, by a \emph{metric measure space} \((X,\mu)\) we mean a complete metric space \(X=(X,d)\) together with a
Radon measure \(\mu\) on \(X\) that is boundedly finite (i.e.\ finite on all bounded sets). Note that we are not requiring
separability of the metric space, but the measure is assumed to be Radon; this axiomatisation differs from that of most papers
on metric Sobolev spaces. The \emph{Sobolev space} $W^{1,2}(X,\mu)$ we consider is the one introduced in \cite{Amb:Gig:Sav:14}
as a variant of \cite{Ch:99}: it is defined via the $L^2$-relaxation of the functional.
\begin{align*}
L^2(\mu)\ni f\mapsto\inf\bigg\{\int_X(\Lip_a\bar f)^2\,\d\mu\;\bigg|\;\bar f\in\LIP_{bs}(X),\,f=\bar f\text{ }\mu\text{-a.e.}\bigg\}\in[0,+\infty].
\end{align*}
Here, \(\LIP_{bs}(\X)\) denotes the space of all real-valued Lipschitz functions on \(X\) having bounded support,
while \(\Lip_a f\colon X\to[0,+\infty)\) is the \emph{asymptotic slope} function of \(f\), which is defined as
\[
\Lip_a f(x)\coloneqq\inf\big\{\Lip(f|_{B_r(x)})\;\big|\;r>0\big\}\quad\text{ for every }x\in X.
\]
As proved in \cite{Amb:Gig:Sav:13}, on arbitrary metric measure spaces the above notion of Sobolev space is equivalent
to the \emph{Newtonian--Sobolev space} \(N^{1,2}(X)\) \cite{Sha:00} (see also \cite{HKST:15,Bj:Bj:11}) and to the
Sobolev space defined in terms of \emph{test plans}. For the purposes of this paper, however, we mostly use another
equivalent approach via integration by parts introduced in \cite{DiMar:14,DiMaPhD:14} (see also \cite{Amb:Iko:Luc:Pas:24}).
\begin{remark}\label{rmk:non-sep_mms}{\rm
In fact, the equivalence of the various definitions of Sobolev spaces was proved for complete separable metric spaces
equipped with a boundedly-finite Borel (thus, Radon) measure. However, the equivalence in the larger class of metric
measure spaces under consideration in this paper can be deduced as follows. Given a metric measure space \((X,\mu)\) in
our sense, we have that the support \({\rm spt}(\mu)\) is a closed (thus, complete) and separable subset of \(X\), and that
\(\mu\) is concentrated on \({\rm spt}(\mu)\); here, we are using the assumption that \(\mu\) is a Radon measure. Moreover,
the Sobolev spaces \(W^{1,2}(X,\mu)\) and \(W^{1,2}({\rm spt}(\mu),\mu)\) (with \(W^{1,2}\) defined according to any of the
approaches we mentioned above) can be canonically identified: for the notion via relaxation this is shown in
\cite{DiMa:Gig:Pra:20}; for the notion via test plans it is an easy consequence of the definition,
whence it follows also for the other two notions thanks to the equivalence results of \cite{Amb:Gig:Sav:13}.
See \cite[Section 7]{DiMar:Gig:Pas:Sou:20} for a related discussion on metric Sobolev spaces over non-separable
metric measure spaces.
\fr}\end{remark}

According to \cite{DiMaPhD:14}, a \emph{derivation} on $(X,\mu)$ is a linear operator $b:\LIP_{bs}(X)\to L^0(\mu)$ such that
(i) $b(fg)=fb(g)+gb(f)$ for all $f,g\in \LIP_{bs}(X)$, and (ii) there exists $g\in L^2(\mu)^+$ so that $|b(f)|\le g\,\Lip_a f$
in the \(\mu\)-a.e.\ sense for all $f\in \LIP_{bs}(X)$. The \(\mu\)-a.e.\ smallest such $g\in L^2(\mu)^+$ is denoted by $|b|$.
We denote by \(\Der^2(\X,\mu)\) the space of all derivations on \((X,\mu)\), which is a Banach space with respect to the norm
\(\|b\|_{\Der^2(X,\mu)}\coloneqq\||b|\|_{L^2(\mu)}\). Moreover, we say that a given derivation \(b\in\Der^2(X,\mu)\)
has \emph{divergence} if there exists a function \(\div(b)\in L^2(\mu)\) such that
\[
\int b(f)\,\d\mu=-\int f\,\div(b)\,\d\mu\quad\text{ for every }f\in\LIP_{bs}(\X).
\]
We denote by \(\Der^2_2(X,\mu)\) the space of all derivations on \((X,\mu)\) having divergence. Observe that,
if $h\in \LIP_{bs}(X)$ and $b\in \Der^2_2(X,\mu)$, then $hb\in \Der^2_2(X,\mu)$ and \(\div(hb)=h\,\div(b)+b(h)\).
\begin{remark}{\rm
Derivations with divergence enjoy nicer properties than arbitrary derivations. For example, they are `strongly local',
meaning that the identity \(b(f)=b(g)\) holds \(\mu\)-a.e.\ on \(\{f=g\}\) for every \(b\in\Der^2_2(X,\mu)\) and
\(f,g\in\LIP_{bs}(X)\). Hence, given any function \(h\in\LIP(X)\), there exists a unique \(b(h)\in L^2(\mu)\) such that
\[
\1_{\{\eta=1\}}b(h)=\1_{\{\eta=1\}}b(\eta h)\quad\text{ for every }\eta\in\LIP_{bs}(X).
\]
In particular, it makes sense to consider \(b(\dist_y)\) for any \(y\in X\), where \(\dist_y\coloneqq d(\cdot,y)\).
\fr}\end{remark}
\begin{definition}[Sobolev space via integration by parts]
Let $f\in L^2(\mu)$ be given. Then we declare that $f\in W^{1,2}(X)$ if and only if there exists a linear operator
\(L_f\colon\Der^2_2(X,\mu)\to L^1(\mu)\) such that the following properties are satisfied:
\begin{itemize}
\item[\(\rm i)\)] There exists a function \(H\in L^2(\mu)^+\) such that
\[
|L_f(b)|\leq H|b|\quad\text{ for every }b\in\Der^2_2(X,\mu).
\]
\item[\(\rm ii)\)] It holds that \(L_f(hb)=h\,L_f(b)\) for every \(h\in\LIP_{bs}(X)\) and \(b\in\Der^2_2(X,\mu)\).
\item[\(\rm iii)\)] \textsc{Integration-by-parts formula.} It holds that
\[
\int L_f(b)\,\d\mu=-\int f\,\div(b)\,\d\mu\quad\text{ for every }b\in\Der^2_2(X,\mu).
\]
\end{itemize}
\end{definition}
The minimal $L^2$-function $H$ satisfying i) above is called the \emph{minimal relaxed slope}
(or the \emph{minimal weak upper gradient}) of \(f\), and denoted by $|Df|$. It can be readily checked that
\(L_f\) is uniquely determined, and that
\begin{equation}\label{eq:formula_|Df|}
|Df|=\bigvee_{b\in\Der^2_2(X,\mu)}\1_{\{|b|>0\}}\frac{L_f(b)}{|b|},
\end{equation}
where $\bigvee$ denotes the measure-theoretic essential supremum, cf.\ \cite{Fre:12}. The norm in $W^{1,2}(X,\mu)$ is
\[
\|f\|_{1,2}=\big(\|f\|^2_{L^2(\mu)}+\||Df|\|_{L^2(\mu)}^2\big)^{1/2}\quad\text{ for every }f\in W^{1,2}(X,\mu),
\]
and $(W^{1,2}(X,\mu),\|\cdot\|_{1,2})$ is a Banach space. However, \(W^{1,2}(X,\mu)\) is not always a Hilbert space;
when it does, the metric measure space \((X,\mu)\) is said to be \emph{infinitesimally Hilbertian}, according to
\cite[Definition 4.19]{Gig:15}. Whereas infinitesimal Hilbertianity is a property of metric measure spaces, universal infinitesimal Hilbertianity (see Definition \ref{def:uih}) is a property of a metric space. To our knowledge, this terminology was introduced in \cite{DiMar:Gig:Pas:Sou:20}.

\begin{remark}[Known examples of UIH spaces]\label{rmk:examples_UIH}{\rm
We provide a list of metric spaces that are known to be universally infinitesimally Hilbertian:
\begin{itemize}
\item[\(\rm i)\)] Euclidean spaces \cite{Gig:Pas:21} (see also \cite{DiMa:Lu:Pa:20} for an alternative proof).
\item[\(\rm ii)\)] Riemannian manifolds and Carnot groups \cite{Lu:Pa:20}.
\item[\(\rm iii)\)] Hilbert spaces \cite{Sav:22}.
\item[\(\rm iv)\)] Locally \({\rm CAT}(\kappa)\) spaces \cite{DiMar:Gig:Pas:Sou:20}.
\item[\(\rm v)\)] Sub-Riemannian manifolds \cite{LeDo:Lu:Pa:23}.
\item[\(\rm vi)\)] The \(2\)-Wasserstein space over a Riemannian manifold or a Hilbert space \cite{Fo:Sa:So:23}.
\item[\(\rm vii)\)] The space of finite non-negative Borel measures over a Riemannian manifold equipped with the
Hellinger--Kantorovich distance \cite{DS:So:25}.
\end{itemize}
As discussed in the introduction, the main goal of this paper is to show that also
Alexandrov spaces and finite-dimensional \({\rm RCD}\) spaces are universally infinitesimally Hilbertian.
\fr}\end{remark}

We point out that
\begin{equation}\label{eq:Hilb_UIH_Banach}
\emph{a Banach space is universally infinitesimally Hilbertian if and only if it is Hilbert.}
\end{equation}
For the `if' implication in \eqref{eq:Hilb_UIH_Banach}, recall iii) of Remark \ref{rmk:examples_UIH}. 
For the `only if', one can argue as follows: if \((X,\|\cdot\|)\) is a non-Hilbertian
Banach space, then some \(2\)-dimensional vector subspace \(V\) of \(X\) is not Hilbert. Hence, letting \(\mu\) be the
restriction of the \(2\)-dimensional Hausdorff measure to \(V\), it can be readily checked that
\((X,\|\cdot\|,\mu)\) is not infinitesimally Hilbertian, so that \(X\) is not UIH.
\subsection{Normed modules}\label{sec:normed_mod}
Let \((X,\mu)\) be a metric measure space. Let \(\mathscr M\) be an \emph{\(L^2(\mm)\)-normed \(L^2(\mm)\)-module} in the sense of
\cite[Definition 1.2.10]{Gig:18} (see also \cite[Definition 2.3]{Gig:17}). Recall that \(\mathscr M\) is equipped with a multiplication
operation \(L^\infty(\mu)\times\mathscr M\ni(f,v)\mapsto f\cdot v\in\mathscr M\) and a pointwise norm \(|\cdot|\colon\mathscr M\to L^2(\mu)^+\).
In particular, \(\mathscr M\) is a Banach space with respect to \(\|v\|_{\mathscr M}\coloneqq\||v|\|_{L^2(\mu)}\).
\medskip

When \(\mathscr M\) is a Hilbert space (which, by \cite[Proposition 1.2.21]{Gig:18}, is equivalent to requiring that \(|v+w|^2+|v-w|^2=2|v|^2+2|w|^2\)
for every \(v,w\in\mathscr M\)), we denote by \(\mathscr M\times\mathscr M\ni(v,w)\mapsto\langle v,w\rangle\in\R\) the scalar product of \(\mathscr M\)
(not the pointwise scalar product defined in \cite{Gig:18} right after Remark 1.2.23). It follows from \cite[Proposition 1.2.13 and Theorem 1.2.24]{Gig:18}
that for any \(w\in\mathscr M\) there exists a unique \(L^\infty(\mm)\)-linear and continuous operator \(L_w\colon\mathscr M\to L^1(\mu)\) such that
\begin{equation}\label{eq:Int}
\langle w,v\rangle=\int L_w(v)\,\d\mu\quad\text{ for every }v\in\mathscr M.
\end{equation}
Moreover, there exists a function \(g\in L^2(\mu)^+\) such that \(|L_w(v)|\leq g|v|\) holds for every \(v\in\mathscr M\), and the \(\mu\)-a.e.\ minimal
such function \(g\), which we denote by \(|L_w|\), satisfies \(\||L_w|\|_{L^2(\mu)}=\|w\|_{\mathscr M}\).
\medskip

Following \cite[Section 4.2]{Amb:Iko:Luc:Pas:24}, we give the ensuing definition:
\begin{definition}[Lipschitz tangent module]
Let \((X,\mu)\) be a metric measure space. Then we define its \emph{Lipschitz tangent module} \(L^2_\Lip(TX)\) as the closure of \(\Der^2_2(X,\mu)\) in \(\Der^2(X,\mu)\).
\end{definition}
One can readily check (cf.\ \cite{Amb:Iko:Luc:Pas:24}) that \(\Der^2(X,\mu)\) and \(L^2_\Lip(TX)\) are \(L^2(\mu)\)-normed \(L^2(\mu)\)-modules with respect
to the natural pointwise operations. As proved in \cite[Proposition 6.5]{DiMar:Gig:Pas:Sou:20}, we have that
\begin{equation}\label{eq:inf_Hilb_gives_Hilb_Liptgmod}
(X,\mu)\emph{ is infinitesimal Hilbertian}\quad\Longrightarrow\quad L^2_\Lip(TX)\cong L^2(TX),
\end{equation}
where \(L^2(TX)\) denotes the \emph{(Sobolev) tangent module} in the sense of \cite[Definition 2.3.1]{Gig:18}.
\begin{remark}\label{rmk:bdd_spt_der_dense}{\rm
Given any metric measure space \((X,\mu)\), we claim that
\[
\big\{b\in\Der^2_2(X,\mu)\;\big|\;|b|\text{ has bounded support}\big\}\text{ is dense in }L^2_\Lip(TX).
\]
To prove it, fix \(b\in\Der^2_2(X,\mu)\) and \(\bar x\in{\rm spt}(\mu)\). Letting
\(\eta_n\coloneqq\max\{1-d(B_n(\bar x),\cdot),0\}\in\LIP_{bs}(X)\) for every \(n\in\N\), we have that
\(\eta_n b\in\Der^2_2(X,\mu)\) and that the support of \(|\eta_n b|\) is bounded. Moreover,
\(\|b-\eta_n b\|_{\Der^2(X,\mu)}\to 0\) as \(n\to\infty\) by the dominated convergence theorem.
Since \(\Der^2(X,\mu)\) is dense in \(L^2_\Lip(TX)\) by the very definition of \(L^2_\Lip(TX)\), the claim follows.
\fr}\end{remark}
\subsection{Alexandrov spaces}\label{sec:alex}
We refer the interested reader to \cite{Alex:Kap:Pet:24,Bu:Bu:Iv:01} for extensive surveys on metric geometry and Alexandrov curvature bounds. Throughout this subsection and the rest of the paper, by \emph{Alexandrov space} we mean a complete, length metric space $(Y,d_Y)$ with CBB (curvature bounded below) in the triangle comparison sense, see \cite[Definition 8.2]{Alex:Kap:Pet:24}. We recall that the function
\[
\dist_y\text{ is \(1\)-Lipschitz on }Y\text{ and semiconcave on }Y\setminus\{y\}\text{ for every }y\in Y,
\]
e.g.\ as a consequence of \cite[Theorem 8.23]{Alex:Kap:Pet:24}. Here, we set \(\dist_y\coloneqq d_Y(\cdot,y)\).
\subsubsection*{Tangent cone and its linear subcone}
Given $p\in Y$, let $T_pY:=C(\Sigma_pY)$ be the tangent cone to $Y$ at $p$, given by the Euclidean cone over the space of directions $\Sigma_pY$. The space $\Sigma_pY$ is defined as the closure of (equivalence classes of) unit speed geodesics $\alpha:[0,\varepsilon)\to Y$ emanating from $p$ under the angle metric
\[
\angle(\alpha, \beta)=\angle_p(\alpha, \beta)=\lim_{t,s\to 0}\arccos\frac{d^2(p,\alpha_t)+d^2(p,\beta_s)-d^2(\alpha_t,\beta_s)}{2d(p,\alpha_t)d(p,\beta_s)}\in [0,\pi],
\]
which we extend to $T_pY$ by setting $\angle(tv,sw)=\angle(v,w)$ for $v,w\in \Sigma_pY$. The metric $d_p$ in $T_pY$ is given by 
\[
d_p^2(tv,sw)=t^2+s^2-2st\cos\angle(v,w).
\]

\begin{remark}{\rm
We do not assume here $Y$ to be finite dimensional, or even locally compact. Thus, $T_pY$ is not necessarily a length space, cf.\ \cite[Example 13.6]{Alex:Kap:Pet:24}. It is however contained isometrically in the ultratangent $T^\omega_pY$, which is a geodesic, non-negatively curved Alexandrov space \cite[Theorem 6.17, Theorem 13.1 and Observation 4.9]{Alex:Kap:Pet:24}.
\fr}\end{remark}

For $v\in T_pY$ set
\begin{align*}
|v|=|v|_p&:=d_p(o,v)\\
\langle v,w\rangle &:=|v||w|\cos\angle(v,w).
\end{align*}
Observe that, with this notation, we have $\langle v,v\rangle=|v|^2$. Given $v,w\in T_pY$, there exists a unique vector $as(v,w)\in T_pY$ -- called the \emph{anti-sum} of $v$ and $w$ -- with the following properties:
\begin{align}
0&\le \langle v,z\rangle+\langle w,z\rangle+\langle as(v,w),z\rangle, \quad z\in T_pY\label{eq:antisum1}\\
0&=\langle v,as(v,w)\rangle+\langle w,as(v,w)\rangle+|as(v,w)|^2\label{eq:antisum2};
\end{align}
see \cite[13.32]{Alex:Kap:Pet:24}. Applying this to $v,0\in T_pY$ we obtain the existence of a unique \emph{polar} $v^*\in T_pY$ of $v$ characterized by (a) $|v^*|^2+\langle v,v^*\rangle=0$ and (b) $0\le \langle v,z\rangle+\langle v^*,z\rangle$ for all $z\in T_pY$.

\begin{definition}
A vector $v\in T_pY$ has an \emph{opposite vector} $w\in T_pY$ if $|v|=|w|$ and $\angle(u,v)=\pi$. We denote by $H_pY\subset T_pY$ the collection of all vectors which have an opposite vector, and equip it with the metric inherited from $T_pY$.
\end{definition}
Opposite vectors, if they exist, are unique \cite[Proposition 13.38]{Alex:Kap:Pet:24} (hence we denote the opposite of $v\in H_pY$ by $-v$) and are characterized by
\begin{align*}
\langle v,z\rangle+\langle -v,z\rangle=0,\quad z\in T_pY;
\end{align*}
see \cite[Proposition 13.37]{Alex:Kap:Pet:24}. From this it is easy to see that $-(t\cdot v)=t\cdot (-v)$\footnote{In fact more generally $t\cdot as(v,w)=as(t\cdot v,t\cdot w)$ for $v,w\in T_pY$ and $t\ge 0$.} and that $d(-v,-w)=d(v,w)$. In particular $H_pY$ is a subcone of $T_pY$ and is moreover closed.

\begin{remark}\label{rmk:polar-opposite}{\rm
Note that every $v\in T_pY$ has a polar but not necessarily an opposite. If the opposite $-v$ of $v$ exists, then it is the polar. In fact $v^*=-v$ (i.e.\ the polar is the opposite vector) if and only if $|v|=|v^*|$. This is easily deduced from the identity \(|v|^2=-\langle v,v^*\rangle=-|v||v^*|\cos\angle(v,v^*)\).
\fr}\end{remark}

\subsubsection*{Differential and gradient}
Let $\Omega\subset Y$ be open and $f:\Omega\to\R$ be a locally Lipschitz semiconcave function \cite[Definition 3.17]{Alex:Kap:Pet:24}. Then, for all $p\in\Omega$, $f$ admits a differential $\ud_pf:T_pY\to\R$, which is determined by
\begin{itemize}
    \item[(i)] $\ud_pf(\alpha^+)=\lim_{h\to 0^+}\frac{f(\alpha_h)-f(p)}{h}$, $\alpha^+ \in \Sigma_pY$. Observe that in turn, $\ud_pf(v)$ can be computed for every $v\in \Sigma_pY$ by (approximation with) geodesic directions.
    \item[(ii)] $\ud_pf(tv)=t\ud_pf(v)$, $v\in \Sigma_pY$, $t\ge 0$. 
\end{itemize}

 Moreover, for every $p\in\Omega$, there exists a unique vector $\nabla_pf\in T_pY$ such that
\begin{align*}
\ud_pf(\nabla_pf)=|\nabla_pf|^2\quad\textrm{and}\quad \ud_pf(v)\le \langle\nabla_pf,v\rangle,\quad v\in T_pY;
\end{align*}
see \cite[13.17 and 13.20]{Alex:Kap:Pet:24}.

\begin{remark}\label{rmk:restr-of-concave-fn}{\rm
Since $T_pY$ might not be a length space, we cannot really say that $\ud_pf$ is concave. However, $\ud_pf$ is the restriction to $T_pY$ of the ultradifferential $\ud_p^\omega f:T^\omega_pY\to \R$, which is a concave function; cf.\ \cite[6.6 and 13.1]{Alex:Kap:Pet:24}.
\fr}\end{remark}
\subsection{Tangents of metric spaces}\label{sec:GH} We refer the reader to \cite[Chapter 7 and 8]{Bu:Bu:Iv:01} and \cite[Chapter 4]{Alex:Kap:Pet:24} for the definition and basic properties of Gromov--Hausdorff (GH) limits and ultralimits, respectively. Recall that, given a metric space $X$, a GH (resp. ultra) tangent of $X$ at a point $x\in X$ is a GH (resp. ultra)limit of the sequence $(X,r_j\inv d,x)$ for some sequence of scales $r_j\to 0$. We will denote the rescaled metric space $(X,r_j\inv d)$ by $r_j\inv X$. 

\begin{remark}{\rm
In general, the collection $\operatorname{Tan}_{GH}(X,x)$ of GH tangents of $X$ at $x\in X$ (up to pointed isometric homeomorphism) may be empty, while the collection $\operatorname{Tan}_\omega(X,x)$ of ultratangents, or $\omega$-tangents (with respect to a non-principal ultrafilter $\omega$) is always non-empty. The collections coincide for all $x\in X$ e.g. when $X$ is a doubling metric space. If $X$ is a finite dimensional Alexandrov space, then $\operatorname{Tan}_{GH}(X,p)=\{(T_pX,p)\}$ for every $p\in X$, where $T_pX$ is the tangent cone at $p$.
\fr}\end{remark}
Similarly, a blow-up $\psi:(Y,o)\to \R^k$ of a Lipschitz function $\varphi:X\to \R^k$ at a point $x\in X$ is the GH (resp. ultra) limit of the sequence $\frac{\varphi-\varphi(x)}{r_j}:(r_j\inv X,0)\to \R^k$ for some $(r_j)$ converging to zero. The blow-up $\psi$ is always a Lipschitz function with $\LIP(\psi)\le \Lip_a\varphi(x)$.

Given a Lipschitz curve $\gamma:[a,b]\to X$, $\gamma$ is metrically differentiable at $t$ for a.e. $t\in (a,b)$ (see \cite{Kir:94}). If $(Y,o)$ is a GH (resp. ultra) tangent with associated sequence of scales $(r_j)$, it follows that the maps 
\[
\gamma_j:r_j\inv([a,b]-t)\to (r_j\inv X,\gamma_t),\quad \gamma_j(s)=\gamma(t+r_js)
\]
have a GH (resp. ultra) limit $\gamma_\infty:\R\to (Y,o)$ satisfying
\[
d_Y(\gamma_\infty(s),\gamma_\infty(r))=|\gamma_t'||s-r|,\quad s,r\in \R,
\]
where $|\gamma_t'|:=\lim_{h\to\infty}\frac{d(\gamma_{t+h},\gamma_t)}{|h|}$ is the metric speed of $\gamma$ at $t$. Consequently $\gamma_\infty$ is a geodesic line with speed $|\gamma_t'|$. This fact is relevant in the definition of infinitesimal curve-splitting and will be useful in Section \ref{sec:inf-split}.

\section{Infinitesimal Hilbertianity via derivations}
The first result of this section, Theorem \ref{thm:suff_condit_IH}, shows that if \((\Der^2_2(X,\mu),\|\cdot\|_{\Der^2(X,\mu)})\)
is a pre-Hilbert space, then \(W^{1,2}(X,\mu)\) is a Hilbert space. For the reader's convenience we report its full proof,
which is essentially taken from \cite[Theorem 5.10]{Pas:Tai:25}.
\medskip

Let us fix some notation. Given two Banach spaces \(\mathbb V\) and \(\mathbb W\), we denote
by \(\mathbb V\times_2\mathbb W\) the product Banach space obtained by equipping \(\mathbb V\times\mathbb W\)
with the \(2\)-norm \(\mathbb V\times\mathbb W\ni(v,w)\mapsto(\|v\|_{\mathbb V}^2+\|w\|_{\mathbb W}^2)^{1/2}\).
Observe that the space \(\mathbb V\times_2\mathbb W\) is Hilbert if (and only if) both \(\mathbb V\) and \(\mathbb W\) are Hilbert,
and in this case we have that \(\langle(v,w),(\tilde v,\tilde w)\rangle=\langle v,\tilde v\rangle+\langle w,\tilde w\rangle\) for
every \(v,\tilde v\in\mathbb V\) and \(w,\tilde w\in\mathbb W\).
\begin{theorem}\label{thm:suff_condit_IH}
Let \((X,\mu)\) be a metric measure space satisfying
\begin{equation}\label{eq:parall_|b|}
|b+\tilde b|^2+|b-\tilde b|^2=2|b|^2+2|\tilde b|^2\quad\text{ for every }b,\tilde b\in\Der^2_2(X,\mu).
\end{equation}
Then \(L^2_\Lip(T X)\) is Hilbert. Letting \(\mathbb V\) be the linear subspace of
\(L^2(\mu)\times_2 L^2_\Lip(TX)\) given by
\[
\mathbb V\coloneqq\big\{(g,b)\in L^2(\mu)\times\Der^2_2(X,\mu)\;\big|\;g=\div(b)\big\},
\]
the Sobolev space \(W^{1,2}(X,\mu)\) is isometrically isomorphic to the orthogonal complement
\(\mathbb V^\perp\) of \(\mathbb V\) in the Hilbert space \(L^2(\mu)\times_2 L^2_\Lip(TX)\).
\end{theorem}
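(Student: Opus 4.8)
The plan is to deduce the statement in two stages: first upgrade the pointwise parallelogram identity \eqref{eq:parall_|b|} to Hilbertianity of $L^2_\Lip(TX)$ (hence of the ambient product module $L^2(\mu)\times_2 L^2_\Lip(TX)$), and then exhibit an explicit isometric isomorphism between $W^{1,2}(X,\mu)$ and $\mathbb V^\perp$. Since a closed subspace of a Hilbert space is Hilbert, this second step is precisely what yields infinitesimal Hilbertianity of $(X,\mu)$.

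For the first stage I would argue by density. By definition $\Der^2_2(X,\mu)$ is dense in $L^2_\Lip(TX)$, and the pointwise-norm map $v\mapsto|v|\in L^2(\mu)^+$ is $1$-Lipschitz, i.e.\ $\||v|-|w|\|_{L^2(\mu)}\le\|v-w\|_{L^2_\Lip(TX)}$. Hence, for $v,w\in L^2_\Lip(TX)$ and approximating sequences $b_n,\tilde b_n\in\Der^2_2(X,\mu)$, the functions $|b_n|$, $|\tilde b_n|$, $|b_n+\tilde b_n|$, $|b_n-\tilde b_n|$ converge in $L^2(\mu)$ to $|v|$, $|w|$, $|v+w|$, $|v-w|$; passing to a $\mu$-a.e.\ convergent subsequence and using \eqref{eq:parall_|b|} gives the pointwise identity $|v+w|^2+|v-w|^2=2|v|^2+2|w|^2$ on all of $L^2_\Lip(TX)$, which by \cite[Proposition 1.2.21]{Gig:18} means that $L^2_\Lip(TX)$ is a Hilbert module, hence a Hilbert space. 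Therefore $L^2(\mu)\times_2 L^2_\Lip(TX)$ is Hilbert, and so is its closed subspace $\mathbb V^\perp$.

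For the second stage, the heart of the matter is the identification
\[
(H,w)\in\mathbb V^\perp\quad\Longleftrightarrow\quad H\in W^{1,2}(X,\mu)\ \text{and}\ L_H=L_w|_{\Der^2_2(X,\mu)},
\]
where $L_w\colon L^2_\Lip(TX)\to L^1(\mu)$ is the $L^\infty(\mu)$-linear operator attached to $w$ by \eqref{eq:Int}. Indeed, unwinding the definition of $\mathbb V$, orthogonality of $(H,w)$ to $\mathbb V$ reads $\int L_w(b)\,\d\mu=-\int H\,\div(b)\,\d\mu$ for every $b\in\Der^2_2(X,\mu)$, which is exactly the integration-by-parts property iii) for $H$ with candidate operator $L_w|_{\Der^2_2(X,\mu)}$; properties i) and ii) for this candidate are immediate from $|L_w(b)|\le|L_w||b|$ with $|L_w|\in L^2(\mu)^+$, and from the $L^\infty(\mu)$-linearity of $L_w$ together with the fact that $hb\in\Der^2_2(X,\mu)$ for $h\in\LIP_{bs}(X)$; the converse implication is clear from iii). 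Since $L_H$ is uniquely determined by $H$, and since $w$ is in turn determined by $L_w|_{\Der^2_2(X,\mu)}$ via density of $\Der^2_2(X,\mu)$ in $L^2_\Lip(TX)$ and \eqref{eq:Int}, the map $\Phi\colon\mathbb V^\perp\to W^{1,2}(X,\mu)$, $(H,w)\mapsto H$, is well-defined and injective. For surjectivity, given $f\in W^{1,2}(X,\mu)$ property i) yields $|\int L_f(b)\,\d\mu|\le\||Df|\|_{L^2(\mu)}\|b\|_{\Der^2(X,\mu)}$, so $b\mapsto\int L_f(b)\,\d\mu$ extends to a bounded functional on the Hilbert module $L^2_\Lip(TX)$; representing it as $v\mapsto\langle w,v\rangle=\int L_w(v)\,\d\mu$ and invoking iii) gives $\int L_w(b)\,\d\mu=\int L_f(b)\,\d\mu=-\int f\,\div(b)\,\d\mu$, i.e.\ $(f,w)\in\mathbb V^\perp$ with $\Phi(f,w)=f$. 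Thus $\Phi$ is a bijection, and it is linear because $\mathbb V^\perp$ is a linear subspace and, by injectivity, the second component of an element of $\mathbb V^\perp$ is uniquely determined by the first.

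It remains to verify that $\Phi$ is an isometry, i.e.\ $\|w\|_{L^2_\Lip(TX)}=\||DH|\|_{L^2(\mu)}$ for $(H,w)\in\mathbb V^\perp$. Since $\|w\|_{L^2_\Lip(TX)}=\||L_w|\|_{L^2(\mu)}$, it suffices to prove $|L_w|=|DH|$ $\mu$-a.e. On one hand, $\1_{\{|b|>0\}}L_H(b)/|b|=\1_{\{|b|>0\}}L_w(b)/|b|\le|L_w|$ for every $b\in\Der^2_2(X,\mu)$, whence $|DH|\le|L_w|$ by formula \eqref{eq:formula_|Df|}; on the other hand, the inequality $|L_w(b)|\le|DH||b|$ (valid for $b\in\Der^2_2(X,\mu)$ by i)) passes to every $v\in L^2_\Lip(TX)$ along a $\mu$-a.e.\ convergent approximating subsequence, so $|L_w|\le|DH|$ by the minimality characterizing $|L_w|$. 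The step I expect to require the most care is precisely this dictionary between the operator $L_f$ of the Sobolev calculus on $\Der^2_2(X,\mu)$ and the operator $L_w$ coming from the Hilbert-module structure of $L^2_\Lip(TX)$ — in particular the matching of the relaxed-slope norm $|Df|$ with the dual pointwise norm $|L_w|$ — while the remaining verifications are routine manipulations within the normed-module formalism.
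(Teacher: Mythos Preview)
Your proof is correct and follows essentially the same strategy as the paper's: both use the Riesz representation in the Hilbert module \(L^2_\Lip(TX)\) to pair Sobolev functions with elements of the module, and identify \(\mathbb V^\perp\) with \(W^{1,2}(X,\mu)\) via the integration-by-parts formula. The only cosmetic differences are that the paper integrates \eqref{eq:parall_|b|} directly (rather than extending it pointwise by density, which is more than needed) and builds the isometry in the direction \(W^{1,2}(X,\mu)\to\mathbb V^\perp\) rather than its inverse; your explicit verification that \(|L_w|=|DH|\) fills in what the paper leaves as ``easy to show''.
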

\begin{proof}
Integrating \eqref{eq:parall_|b|} we get that \((\Der^2_2(X,\mu),\|\cdot\|_{\Der^2(X,\mu)})\) is a pre-Hilbert space,
thus \(L^2_\Lip(TX)\) is a Hilbert space. For any \(f\in W^{1,2}(X,\mu)\), we have that \(\Der^2_2(X,\mu)\ni b\mapsto\int L_f(b)\,\d\mu\in\R\)
is a bounded linear operator, thus the Riesz representation theorem ensures that there exists a unique element \(w_f\in L^2_\Lip(TX)\) such that
\(\langle w_f,b\rangle=\int L_f(b)\,\d\mu\) for every \(b\in\Der^2_2(X,\mu)\). Using the results of Section \ref{sec:normed_mod},
it is easy to show that \(\|w_f\|_{L^2_\Lip(TX)}=\||Df|\|_{L^2(\mu)}\). Therefore, the linear map
\[
\phi\colon W^{1,2}(X,\mu)\to L^2(\mu)\times_2 L^2_\Lip(TX),\quad\phi(f)\coloneqq(f,w_f)
\]
is an isometry. We claim that \(\phi(W^{1,2}(X,\mu))=\mathbb V^\perp\). If \(f\in W^{1,2}(X,\mu)\) and \(b\in\Der^2_2(X,\mu)\), then
\[
\langle\phi(f),(\div(b),b)\rangle=\langle f,\div(b)\rangle+\langle w_f,b\rangle=\int f\,\div(b)\,\d\mu+\int L_f(b)\,\d\mu=0,
\]
which yields \(\phi(W^{1,2}(X,\mu))\subset\mathbb V^\perp\). To prove the converse inclusion, fix any element \((f,w)\in\mathbb V^\perp\).
Letting \(L_w\colon L^2_\Lip(TX)\to L^1(\mu)\) be as in \eqref{eq:Int}, we have that \(L_w|_{\Der^2_2(X,\mu)}\colon\Der^2_2(X,\mu)\to L^1(\mu)\)
satisfies \(|L_w(b)|\leq|L_w||b|\) for every \(b\in\Der^2_2(X,\mu)\), and \(L_w(hb)=h\,L_w(b)\) for all \(h\in\LIP_{bs}(X)\) (by the
\(L^\infty(\mu)\)-linearity of \(L_w\)). Finally, since \((\div(b),b)\in\mathbb V\) for all \(b\in\Der^2_2(X,\mu)\), it holds that
\[
\int f\,\div(b)\,\d\mu+\int L_w(b)\,\d\mu=\langle f,\div(b)\rangle+\langle w,b\rangle=\langle(f,w),(\div(b),b)\rangle=0,
\]
so that \(\int L_w(b)\,\d\mu=-\int f\,\div(b)\,\d\mu\). All in all, we proved that \(f\in W^{1,2}(X,\mu)\) and \(L_f=L_w\),
thus \((f,w)=\phi(f)\in\phi(W^{1,2}(X,\mu))\), whence the claim. As \(\phi\) is an isometry, the statement follows.
\end{proof}
\begin{corollary}\label{cor:equiv_cond_IH}
A metric measure space \((X,d,\mu)\) is infinitesimally Hilbertian if and only if its Lipschitz tangent module \(L^2_\Lip(TX)\) is Hilbert.
\end{corollary}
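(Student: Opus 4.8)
The plan is to obtain both implications by combining Theorem \ref{thm:suff_condit_IH} with the pointwise characterisation of Hilbert modules recalled in Section \ref{sec:normed_mod} and with the module isomorphism \eqref{eq:inf_Hilb_gives_Hilb_Liptgmod}. For the ``if'' direction I would assume that $L^2_\Lip(TX)$ is a Hilbert space. By \cite[Proposition 1.2.21]{Gig:18} this is equivalent to the pointwise parallelogram identity $|v+w|^2+|v-w|^2=2|v|^2+2|w|^2$ holding $\mu$-a.e.\ for all $v,w\in L^2_\Lip(TX)$. Since $\Der^2_2(X,\mu)\subseteq L^2_\Lip(TX)$ and the pointwise norm on $L^2_\Lip(TX)$ restricts to $|\cdot|$ on derivations, this specialises to exactly the hypothesis \eqref{eq:parall_|b|}. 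Theorem \ref{thm:suff_condit_IH} then identifies $W^{1,2}(X,\mu)$ isometrically with the orthogonal complement $\mathbb V^\perp$ inside the Hilbert space $L^2(\mu)\times_2 L^2_\Lip(TX)$; being isometrically isomorphic to a closed subspace of a Hilbert space, $W^{1,2}(X,\mu)$ is itself Hilbert, i.e.\ $(X,\mu)$ is infinitesimally Hilbertian.

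For the ``only if'' direction I would assume $(X,\mu)$ infinitesimally Hilbertian and invoke \eqref{eq:inf_Hilb_gives_Hilb_Liptgmod}, which provides an isomorphism of $L^2(\mu)$-normed modules $L^2_\Lip(TX)\cong L^2(TX)$. For infinitesimally Hilbertian spaces the Sobolev tangent module $L^2(TX)$ is a Hilbert module: this is standard in Gigli's theory \cite{Gig:18}, since the cotangent module generating it is itself Hilbert precisely because $W^{1,2}(X,\mu)$ is, and $L^2(TX)$ is its dual. As a module isometrically isomorphic to a Hilbert module is again Hilbert, it follows that $L^2_\Lip(TX)$ is Hilbert.

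I do not anticipate a serious obstacle here. The only point requiring attention is that Theorem \ref{thm:suff_condit_IH} is phrased as a one-way implication, so the ``only if'' direction cannot be read off from it directly and must instead be routed through the identification $L^2_\Lip(TX)\cong L^2(TX)$ together with the Hilbertianity of the Sobolev tangent module; everything else is a formal consequence of the material of Sections \ref{sec:sobolev} and \ref{sec:normed_mod}.
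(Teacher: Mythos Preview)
Your proposal is correct and follows essentially the same route as the paper: the ``if'' direction via Theorem \ref{thm:suff_condit_IH} (you spell out the intermediate step from Hilbertianity of $L^2_\Lip(TX)$ to the pointwise parallelogram law \eqref{eq:parall_|b|}, which the paper leaves implicit), and the ``only if'' direction via \eqref{eq:inf_Hilb_gives_Hilb_Liptgmod} combined with the Hilbertianity of $L^2(TX)$ under infinitesimal Hilbertianity (the paper cites \cite[Proposition 2.3.17]{Gig:18} for the latter).
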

\begin{proof}
On the one hand, if \((X,d,\mu)\) is infinitesimally Hilbertian, then \(L^2(TX)\) is Hilbert by \cite[Proposition 2.3.17]{Gig:18}, and thus
\(L^2_\Lip(TX)\) is Hilbert by \eqref{eq:inf_Hilb_gives_Hilb_Liptgmod}. On the other hand, if \(L^2_\Lip(TX)\) is Hilbert, then \(W^{1,2}(X,\mu)\)
is Hilbert by Theorem \ref{thm:suff_condit_IH}.
\end{proof}

We conclude the section with a technical result, Lemma \ref{lem:formula_|b|^2+|tilde_b|^2}, which will be useful to prove that Alexandrov spaces
are UIH. First, let us give an auxiliary definition: given any metric measure space \((X,\mu)\), we say that a set \(S\subset\{f\in\LIP(X):\Lip(f)\leq 1\}\)
is \emph{norming for \(\Der^2_2(X,\mu)\)} if
\begin{equation}\label{eq:formula_|b|}
|b|=\bigvee_{f\in S}b(f)\quad\text{ for every }b\in\Der^2_2(X,\mu).
\end{equation}
It follows from \cite[Proposition 5.5]{DiMar:Gig:Pas:Sou:20} that if \(D\) is a dense subset of \({\rm spt}(\mu)\), then
\begin{equation}\label{eq:dist_y_norming}
\{\dist_y:y\in D\}\text{ is norming for }\Der^2_2(X,\mu).
\end{equation}
\begin{lemma}\label{lem:formula_|b|^2+|tilde_b|^2}
Let \((X,\mu)\) be a metric measure space. Let \(S\) be norming for \(\Der^2_2(X,\mu)\). Then
\begin{equation}\label{eq:formula_|b|^2+|tilde_b|^2}
\sqrt{|b|^2+|\tilde b|^2}=\bigvee_{\substack{\alpha,\beta\in[0,1]: \\ \alpha^2+\beta^2\leq 1}}\bigvee_{f,g\in S}b(\alpha f)+\tilde b(\beta g)
\quad\text{ for every }b,\tilde b\in\Der^2_2(X,\mu).
\end{equation}
\end{lemma}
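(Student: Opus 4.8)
The plan is to establish the identity \eqref{eq:formula_|b|^2+|tilde_b|^2} by showing the two inequalities ``$\geq$'' and ``$\leq$'' separately, using \eqref{eq:formula_|b|} applied to both $b$ and $\tilde b$ together with the elementary scalar optimization $\sqrt{a^2+c^2}=\sup\{\alpha a+\beta c:\alpha,\beta\geq 0,\ \alpha^2+\beta^2\leq 1\}$ valid for $a,c\geq 0$. Throughout I will use that, by the defining property \(|b(f)|\leq|b|\Lip_a f\leq|b|\) of derivations applied to $1$-Lipschitz $f$, every term $b(\alpha f)+\tilde b(\beta g)=\alpha\,b(f)+\beta\,\tilde b(g)$ is $\mu$-a.e.\ dominated by $\alpha|b|+\beta|\tilde b|$.

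For the inequality ``$\leq$'' (the easy direction), I would observe that for any $\alpha,\beta\in[0,1]$ with $\alpha^2+\beta^2\leq 1$ and any $f,g\in S$ one has, $\mu$-a.e.,
\[
b(\alpha f)+\tilde b(\beta g)=\alpha\,b(f)+\beta\,\tilde b(g)\leq\alpha|b|+\beta|\tilde b|\leq\sqrt{\alpha^2+\beta^2}\,\sqrt{|b|^2+|\tilde b|^2}\leq\sqrt{|b|^2+|\tilde b|^2},
\]
by Cauchy--Schwarz on $\R^2$. Taking the essential supremum over all admissible $\alpha,\beta,f,g$ on the left gives ``$\leq$''. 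The subtle point here is only that this is a \emph{bona fide} $\mu$-a.e.\ bound for each fixed quadruple, hence passes to the (countably-generated, in the sense of \cite{Fre:12}) essential supremum; this is routine.

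For the inequality ``$\geq$'' (the substantive direction), set $F\coloneqq\sqrt{|b|^2+|\tilde b|^2}\in L^2(\mu)^+$ and let $G$ denote the right-hand side of \eqref{eq:formula_|b|^2+|tilde_b|^2}. Fix $\alpha,\beta\in[0,1]$ with $\alpha^2+\beta^2\leq 1$. Taking the essential supremum first over $f\in S$ and then (independently) over $g\in S$ inside the definition of $G$, and using \eqref{eq:formula_|b|} for $b$ and for $\tilde b$, I get
\[
G\ \geq\ \bigvee_{f\in S}\Big(\alpha\,b(f)+\beta\,\tilde b(g)\Big)\ \geq\ \alpha\bigvee_{f\in S}b(f)+\beta\,\tilde b(g)\ =\ \alpha|b|+\beta\,\tilde b(g),
\]
and then taking $\bigvee_{g\in S}$ gives $G\geq\alpha|b|+\beta|\tilde b|$ $\mu$-a.e., for each fixed admissible pair $(\alpha,\beta)$. (The interchange of the two essential suprema and the fact that a constant $\alpha\geq0$ pulls out of $\bigvee$ are standard properties of the essential supremum, cf.\ \cite{Fre:12}; one should note $\tilde b(g)$ may be negative, but since $\bigvee_{g}\tilde b(g)=|\tilde b|\geq 0$ the bound $G\geq\alpha|b|+\beta|\tilde b|$ still results after the second supremum.) Now fix a countable dense set of admissible pairs $(\alpha,\beta)$ in $\{\alpha^2+\beta^2\leq1\}\cap[0,1]^2$; outside a $\mu$-null set we have $G(x)\geq\alpha|b|(x)+\beta|\tilde b|(x)$ simultaneously for all of them, and by continuity in $(\alpha,\beta)$ this extends to all admissible pairs at such $x$. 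Optimizing over $(\alpha,\beta)$ pointwise via the scalar identity $\sup\{\alpha a+\beta c\}=\sqrt{a^2+c^2}$ (whose maximizer $(\alpha,\beta)=(a,c)/\sqrt{a^2+c^2}$ indeed satisfies $\alpha,\beta\in[0,1]$, $\alpha^2+\beta^2\leq1$) yields $G(x)\geq\sqrt{|b|(x)^2+|\tilde b|(x)^2}=F(x)$ for $\mu$-a.e.\ $x$, which is ``$\geq$''.

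The main obstacle is the careful handling of the essential supremum: ensuring that the two inner suprema over $f$ and $g\in S$ can be separated and that scalars pull through, and that the outer supremum over the (uncountable) parameter set $\{\alpha^2+\beta^2\leq1\}$ reduces to a countable one so that the pointwise scalar optimization can be performed $\mu$-a.e.\ simultaneously. All of this is handled by standard lattice properties of $\bigvee$ over $L^0(\mu)$ (\cite{Fre:12}) together with the continuity of $(\alpha,\beta)\mapsto\alpha\,b(f)+\beta\,\tilde b(g)$; no metric-geometric input beyond the norming property \eqref{eq:formula_|b|} is needed.
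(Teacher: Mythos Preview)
Your proof is correct. The ``easy'' direction (that the right-hand side is dominated by $\sqrt{|b|^2+|\tilde b|^2}$) is identical to the paper's. For the other direction, the paper proceeds differently: it fixes $\varepsilon>0$, uses the norming property to build a Borel partition $(E_n)$ of $\{|b|^2+|\tilde b|^2>0\}$ together with functions $f_n,g_n\in S$ and scalars $\alpha_n,\beta_n$ that $\varepsilon$-approximate, on each $E_n$, the quantities $|b|$, $|\tilde b|$, and the optimal direction $(|b|,|\tilde b|)/\sqrt{|b|^2+|\tilde b|^2}$, and then estimates directly. Your route avoids the partition altogether by exploiting the lattice identities $\bigvee_f(\alpha\,b(f)+c)=\alpha|b|+c$ and $\bigvee_g(\beta\,\tilde b(g)+c')=\beta|\tilde b|+c'$ to obtain $G\geq\alpha|b|+\beta|\tilde b|$ for each admissible $(\alpha,\beta)$, and then optimizes over a countable dense set of $(\alpha,\beta)$. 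This is a bit cleaner and makes the role of the norming property more transparent; the paper's argument, on the other hand, is slightly more hands-on and does not rely on the additivity of $\bigvee$ under translation. One cosmetic remark: your labels ``$\leq$'' and ``$\geq$'' are swapped relative to the usual convention of reading \eqref{eq:formula_|b|^2+|tilde_b|^2} left-to-right (what you call the ``$\leq$'' direction actually establishes $\sqrt{|b|^2+|\tilde b|^2}\geq\text{RHS}$), but the content is unambiguous.
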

\begin{proof}
Denote by \(\theta\in L^2(\mu)\) the right-hand side of \eqref{eq:formula_|b|^2+|tilde_b|^2}. Fix any
\(\varepsilon\in\mathbb Q\cap(0,1)\). Using \eqref{eq:formula_|b|}, we find a Borel \(\mu\)-a.e.\ partition
\((E_n)_{n\in\N}\) of \(\{|b|^2+|\tilde b|^2>0\}\), functions \((f_n)_{n\in\N},(g_n)_{n\in\N}\subset S\)
and \((\alpha_n)_{n\in\N},(\beta_n)_{n\in\N}\subseteq[0,1]\) with \(\alpha_n^2+\beta_n^2\leq 1\) such
that the following inequalities hold \(\mu\)-a.e.\ on \(E_n\):
\[\begin{split}
b(f_n)\geq(1-\varepsilon)|b|,&\qquad\tilde b(g_n)\geq(1-\varepsilon)|\tilde b|,\\
\bigg|\frac{|b|}{\sqrt{|b|^2+|\tilde b|^2}}-\alpha_n\bigg|\leq\varepsilon,
&\qquad\bigg|\frac{|\tilde b|}{\sqrt{|b|^2+|\tilde b|^2}}-\beta_n\bigg|\leq\varepsilon.
\end{split}\]
Therefore, we have that the following inequalities hold \(\mu\)-a.e.\ on \(E_n\):
\[\begin{split}
\sqrt{|b|^2+|\tilde b|^2}&=\frac{|b|^2+|\tilde b|^2}{\sqrt{|b|^2+|\tilde b|^2}}
\leq\frac{1}{1-\varepsilon}\frac{|b|}{\sqrt{|b|^2+|\tilde b|^2}}b(f_n)
+\frac{1}{1-\varepsilon}\frac{|\tilde b|}{\sqrt{|b|^2+|\tilde b|^2}}\tilde b(g_n)\\
&\leq\frac{\alpha_n\,b(f_n)+\beta_n\,\tilde b(g_n)}{1-\varepsilon}
+\varepsilon\frac{b(f_n)+\tilde b(g_n)}{1-\varepsilon}\leq
\frac{\theta}{1-\varepsilon}+\varepsilon\frac{|b|+|\tilde b|}{1-\varepsilon}.
\end{split}\]
Thanks to the arbitrariness of \(n\) and \(\varepsilon\), we deduce that \(\sqrt{|b|^2+|\tilde b|^2}\leq\theta\).
Conversely, fix any \(\alpha,\beta\in[0,1]\) with \(\alpha^2+\beta^2\leq 1\) and \(f,g\in S\). Then we have that
\[
b(\alpha f)+\tilde b(\beta g)\leq\alpha|b|+\beta|\tilde b|\leq
\sqrt{\alpha^2+\beta^2}\sqrt{|b|^2+|\tilde b|^2}\leq\sqrt{|b|^2+|\tilde b|^2},
\]
which gives \(\theta\leq\sqrt{|b|^2+|\tilde b|^2}\). All in all, \eqref{eq:formula_|b|^2+|tilde_b|^2} is proved.
\end{proof}
\section{Alexandrov spaces and Theorem \ref{thm:alex-geom-bundle}}

The goal of this section is to prove Theorem \ref{thm:alex-geom-bundle}.% and Corollary \ref{cor:alex-uih}.
\subsection{Geometric bundle of an Alexandrov space}
Recall the definition of the linear subcone from Section \ref{sec:alex}.
\begin{proposition}\label{prop:lin-subcone}
Let $Y$ be an Alexandrov space and $p\in Y$. Then $(H_pY,|\cdot|)$ is isometric to a Hilbert space. More precisely, for each $v,w\in H_pY$ we have that $v+w:=as(-v,-w)\in H_pY$; the map $(v,w)\mapsto v+w$ defines a vector addition which makes $H_pY$ into a vector space and $|\cdot|$ restricted to $H_pY$ is an inner product norm. 
\end{proposition}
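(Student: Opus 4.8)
The plan is to show that $H_pY$ is closed under the operation $v+w := as(-v,-w)$, that this operation makes $H_pY$ into a real vector space, and that $|\cdot|$ satisfies the parallelogram law — so that by polarization $\langle\cdot,\cdot\rangle$ (as defined in Section~\ref{sec:alex}) is the associated inner product. First I would check that $as(-v,-w)$ has an opposite: using the footnote identity $t\cdot as(u,u')=as(t\cdot u,t\cdot u')$ together with the characterization of opposites via $\langle u,z\rangle+\langle -u,z\rangle=0$, one expects $-(as(-v,-w))=as(v,w)$, and then the defining inequalities \eqref{eq:antisum1}--\eqref{eq:antisum2} for both $as(-v,-w)$ and $as(v,w)$ should combine to show these two vectors are genuine opposites (equal norm, angle $\pi$). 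Hence $v+w\in H_pY$.

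Next I would establish the vector-space axioms. The zero vector is $o$ (note $as(o,o)=o$ since $as(\cdot,\cdot)$ scales and $0\cdot(\text{anything})=o$), and $-v$ in the vector-space sense coincides with the opposite: $v+(-v)=as(-v,v)$, which by \eqref{eq:antisum2} has norm $0$ once one checks $\langle -v,as(-v,v)\rangle+\langle v,as(-v,v)\rangle=0$ using $\langle -v,z\rangle=-\langle v,z\rangle$. Commutativity is immediate from the symmetry of $as$. Scalar multiplication is the cone multiplication extended to negatives via $(-t)\cdot v := t\cdot(-v)$ for $t>0$; homogeneity $t\cdot(v+w)=(t\cdot v)+(t\cdot w)$ follows from the footnote scaling identity for $as$. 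The two genuinely substantive points are \emph{associativity} of $+$ and \emph{distributivity} $(s+t)\cdot v=(s\cdot v)+(t\cdot v)$.

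For both of these the key tool is that the pairing $z\mapsto \langle u,z\rangle$ determines $u\in H_pY$: indeed, since for $u\in H_pY$ we have $\langle u,z\rangle=-\langle u,(-z)\rangle$ for all $z$ (apply the opposite-characterization), $u$ is determined by its values $\langle u,z\rangle$, $z\in T_pY$, via the defining properties of the gradient/duality; more directly, if $\langle u,z\rangle\le\langle u',z\rangle$ for all $z\in T_pY$ and $u,u'\in H_pY$, then testing with $z=u-u'$ (which lies in $H_pY\subset T_pY$) and with $z=-(u-u')$ forces $u=u'$. Granting this linearity-detection lemma, associativity and distributivity reduce to verifying that both sides pair the same way against every $z$, which follows from \eqref{eq:antisum1}--\eqref{eq:antisum2} and bilinearity of $\langle\cdot,\cdot\rangle$ on the relevant vectors (here one uses that $\langle\cdot,\cdot\rangle$ restricted to $H_pY\times T_pY$ is additive in the first slot — itself a consequence of the characterization $\langle v+w,z\rangle=\langle v,z\rangle+\langle w,z\rangle$, proved again via the antisum relations). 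Finally, with $+$ bilinear and $\langle v,v\rangle=|v|^2$, the parallelogram identity $|v+w|^2+|v-w|^2=2|v|^2+2|w|^2$ is a formal computation, and $(H_pY,|\cdot|)$ is a Hilbert space (completeness is the closedness of $H_pY$ in the complete space $T^\omega_pY$, already noted).

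\textbf{Main obstacle.} The crux is the linearity-detection step — proving that the map $H_pY\to (T_pY\to\R)$, $v\mapsto\langle v,\cdot\rangle$, is injective and additive, equivalently that $\langle v+w,z\rangle=\langle v,z\rangle+\langle w,z\rangle$ for all $z$. Everything else is bookkeeping once this is in hand. I would extract this from \cite[Proposition 13.37, Proposition 13.38]{Alex:Kap:Pet:24} and the antisum relations \eqref{eq:antisum1}--\eqref{eq:antisum2}, possibly passing to the ultralimit $T^\omega_pY$ (a genuine non-negatively curved geodesic Alexandrov space, where $\langle\cdot,\cdot\rangle$ and comparison geometry are better behaved) and then restricting back, using Remark~\ref{rmk:restr-of-concave-fn}.
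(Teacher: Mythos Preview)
Your overall structure is reasonable, but you have misidentified the main obstacle. The genuinely hard step is the very first one: showing that \(as(-v,-w)\in H_pY\). Your proposed argument---combining the antisum inequalities \eqref{eq:antisum1}--\eqref{eq:antisum2} for \(a\coloneqq as(-v,-w)\) and \(b\coloneqq as(v,w)\)---only yields \(\langle a,z\rangle+\langle b,z\rangle\geq 0\) for all \(z\in T_pY\) (using \(\langle -v,z\rangle=-\langle v,z\rangle\) for \(v\in H_pY\)). This says \(b\) satisfies the polar-type inequality for \(a\), but it does \emph{not} force \(|a|=|b|\) or \(\langle a,b\rangle=-|a|^2\), which is what you need for \(b\) to be the opposite of \(a\). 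The footnote identity \(t\cdot as(u,u')=as(t\cdot u,t\cdot u')\) is only stated for \(t\geq 0\), so it cannot be invoked with \(t=-1\). Without further input, the algebra of antisums is simply not enough here.

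The paper supplies the missing geometric ingredient: it shows \(\zeta\coloneqq\tfrac12(v+w)\) is a midpoint of \([v,w]\), computes that its polar \(\zeta^*\) is a midpoint of \([-v,-w]\), and then uses the \(2\)-concavity of the squared distance to the origin in the non-negatively curved ultralimit \(T^\omega_pY\) to obtain \(|\zeta^*|\geq|\zeta|\). Since polars always satisfy \(|\zeta^*|\leq|\zeta|\), this forces \(\zeta^*=-\zeta\) (Remark~\ref{rmk:polar-opposite}), hence \(\zeta\in H_pY\). This curvature input is exactly what your sketch lacks. By contrast, the step you flag as the ``main obstacle''---additivity \(\langle v+w,z\rangle=\langle v,z\rangle+\langle w,z\rangle\) for \(z\in H_pY\)---is almost immediate once closure under \(+\) is known: apply \eqref{eq:antisum1} for \(as(-v,-w)\) with both \(z\) and \(-z\) and use \(\langle u,-z\rangle=-\langle u,z\rangle\). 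The paper uses precisely this observation in its first displayed line.
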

This is an elaboration of the statement of \cite[Theorem 13.39]{Alex:Kap:Pet:24}. For simplicity we omit the dependence on $p$ of $d$ in the proof.
\begin{proof}
The proof of \cite[Theorem 13.39]{Alex:Kap:Pet:24} shows that $\zeta:=\frac{v+w}{2}=as(-v/2,-w/2)$ is a midpoint of $[v,w]$, i.e.\ $d(v,\zeta)=d(w,\zeta)=\frac 12 d(v,w)$. Since in \eqref{eq:antisum1} equality must hold for $z\in H_pY$ we obtain
\begin{align*}
\langle \zeta,v\rangle=\frac 12(|v|^2+\langle v,w\rangle),\quad \langle \zeta,w\rangle=\frac 12 (|w|^2+\langle v,w\rangle),\quad |\zeta|^2=\frac 12 (\langle v,\zeta\rangle+\langle w,\zeta\rangle)
\end{align*}
(the last equation is just \eqref{eq:antisum2}). In particular 
\begin{align}\label{eq:hilb-identity}
|\zeta|^2=\frac 14(|v|^2+|w|^2+2\langle v,w\rangle).
\end{align}
If $\zeta^*$ is the polar of $\zeta$, then 
\[
|\zeta^*|\le |\zeta|,\quad \langle\zeta^*,v\rangle+\langle\zeta,v\rangle=0,\quad \langle\zeta^*,w\rangle+\langle\zeta,w\rangle=0.
\]
Now 
\begin{align*}
    d^2(\zeta^*,-v)&=|\zeta^*|^2+|-v|^2-2\langle\zeta^*,-v\rangle\le |\zeta|^2+|v|^2-2\langle\zeta,v\rangle =|\zeta|^2-\langle v,w\rangle \\
    &=\frac 14(|v|^2+|w|^2-2\langle v,w\rangle)=\frac 14d^2(v,w)=\frac 14d^2(-v,-w),
\end{align*}
and similarly $d^2(\zeta^*,-w)\le \frac 14 d^2(-v,-w)$, implying that $\zeta^*$ is a midpoint of $[-v,-w]$. From the 2-concavity of the distance squared on $T^\omega_pY$ we obtain
\begin{align*}
|\zeta^*|^2\ge\frac{|-v|^2+|-w|^2}{2}-\frac 14d^2(-v,-w)= \frac{|v|^2+|w|^2+\langle v,w\rangle}{4}=|\zeta|^2.
\end{align*}
By Remark \ref{rmk:polar-opposite} it follows that $\zeta^*=-\zeta$ and consequently $v+w\in H_pY$. The midpoint property and the closedness of $H_pY$ in $T^\omega_pY$ further imply that $H_pY$ is a complete geodesic space. We leave it to the interested reader to verify that, equipped with this addition and the scalar multiplication from the cone structure, $H_pY$ becomes a vector space. From \eqref{eq:hilb-identity} we obtain the identity
\[
|v+w|^2=|v|^2+|w|^2+2\langle v,w\rangle,
\]
from which it readily follows that $\langle\cdot,\cdot\rangle$ is an inner product on $H_pY$.
\end{proof}

\begin{lemma}\label{lem:linearity}
    For any $\xi\in T_pY$, the map $H_pY\ni z\mapsto \langle\xi,z\rangle$ is linear.
\end{lemma}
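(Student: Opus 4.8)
The plan is to establish the two defining properties of linearity of the map $z \mapsto \langle \xi, z\rangle$ on $H_pY$: homogeneity $\langle \xi, t\cdot z\rangle = t\langle\xi, z\rangle$ for $t \in \R$, and additivity $\langle \xi, v+w\rangle = \langle\xi, v\rangle + \langle\xi, w\rangle$. For homogeneity, the case $t \ge 0$ is immediate from the cone structure, since $|t\cdot z| = t|z|$ and $\angle(\xi, t\cdot z) = \angle(\xi, z)$ by definition; for $t < 0$ one reduces to $t = -1$, i.e.\ one must check $\langle\xi, -z\rangle = -\langle\xi, z\rangle$, which is exactly the characterization of the opposite vector recalled just before Definition of $H_pY$ (namely $\langle v, z'\rangle + \langle -v, z'\rangle = 0$ for all $z' \in T_pY$, applied with the roles of the arguments swapped — here we use that $\langle\cdot,\cdot\rangle$ is symmetric). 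Combining, homogeneity holds for all $t \in \R$.

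The substantive point is additivity. Here I would use the anti-sum characterization together with the fact that $v + w = as(-v,-w)$ lies in $H_pY$ (Proposition \ref{prop:lin-subcone}). By the defining inequality \eqref{eq:antisum1} of the anti-sum $as(-v,-w)$, for every $z \in T_pY$ we have
\[
0 \le \langle -v, z\rangle + \langle -w, z\rangle + \langle as(-v,-w), z\rangle = -\langle v,z\rangle - \langle w,z\rangle + \langle v+w, z\rangle,
\]
where in the last equality I used homogeneity in the $-1$ direction established above (so $\langle -v,z\rangle = -\langle v,z\rangle$, etc., valid since this identity holds for all $z \in T_pY$ by the opposite-vector characterization). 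This gives $\langle v+w, z\rangle \ge \langle v,z\rangle + \langle w,z\rangle$ for all $z \in T_pY$, in particular for $z = \xi$. To get the reverse inequality I apply the same estimate with $v, w$ replaced by $-v, -w$: since $(-v)+(-w) = -(v+w)$ (addition on $H_pY$ commutes with taking opposites, which follows from the uniqueness of opposites or can be extracted from the vector space structure in Proposition \ref{prop:lin-subcone}), I obtain $\langle -(v+w), z\rangle \ge \langle -v, z\rangle + \langle -w, z\rangle$, i.e.\ $-\langle v+w,z\rangle \ge -\langle v,z\rangle - \langle w,z\rangle$, which is the reverse inequality. Hence $\langle \xi, v+w\rangle = \langle\xi, v\rangle + \langle\xi, w\rangle$.

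The main obstacle is bookkeeping around which identities hold for all $z$ in the full tangent cone $T_pY$ versus only on $H_pY$: the equality in \eqref{eq:antisum1} is only asserted for $z \in H_pY$ in the proof of Proposition \ref{prop:lin-subcone}, but for the present lemma I need the \emph{inequality} \eqref{eq:antisum1}, which is valid for all $z \in T_pY$, so testing against an arbitrary $\xi \in T_pY$ is legitimate; the trick is that applying the inequality both to $(v,w)$ and to $(-v,-w)$ and using that the anti-sums are genuine negatives of each other upgrades the two one-sided bounds to an equality. One should also double-check that $-v, -w \in H_pY$ whenever $v, w \in H_pY$ (clear, since the opposite of $-v$ is $v$), so that $as(-v,-w)$ and $as(v,w)$ are both in $H_pY$ and the manipulations with homogeneity in the $-1$ direction are justified. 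Beyond that the argument is routine.
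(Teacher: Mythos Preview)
Your proof is correct, and it takes a genuinely different---and more elementary---route than the paper's.

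The paper establishes the superadditivity of $z\mapsto\langle\xi,z\rangle$ on $H_pY$ by an approximation argument: it chooses points $q_n\in\operatorname{Str}(p)$ so that $[p,q_n]^+\to\xi/|\xi|$, observes that $\d_pf_n(v)=-|\xi|\langle[p,q_n]^+,v\rangle$ for $f_n=|\xi|\dist_{q_n}$, and that each $\d_pf_n$ extends to the concave ultradifferential on $T_p^\omega Y$. Since $H_pY$ sits totally geodesically in $T_p^\omega Y$, the restriction $\d_pf_n|_{H_pY}$ is concave; passing to the limit gives that $-\langle\xi,\cdot\rangle$ is concave on $H_pY$, which is the midpoint-superadditivity. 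Both proofs then upgrade the one-sided inequality to equality in the same way, replacing $(v,w)$ by $(-v,-w)$ and using $\langle\xi,-z\rangle=-\langle\xi,z\rangle$.

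By contrast, you read the superadditivity directly off the anti-sum inequality \eqref{eq:antisum1}: testing $as(-v,-w)=v+w$ against an arbitrary $z\in T_pY$ and cancelling the signs via the opposite-vector identity $\langle -v,z\rangle=-\langle v,z\rangle$ gives $\langle v+w,z\rangle\ge\langle v,z\rangle+\langle w,z\rangle$ with no analysis at all. This bypasses the approximation by distance functions, the ultralimit tangent cone $T_p^\omega Y$, and the concavity of ultradifferentials entirely. The only ingredient you need beyond \eqref{eq:antisum1} is that $(-v)+(-w)=-(v+w)$, which is immediate once Proposition~\ref{prop:lin-subcone} grants the vector-space structure. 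The paper's approach has the conceptual advantage of exhibiting $\langle\xi,\cdot\rangle$ as a limit of differentials of semiconcave functions (which is morally why the map behaves well), but your argument is shorter and uses strictly less machinery.
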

Before the proof, we point out the following immediate corollary.
\begin{corollary}
If $\Omega\subset Y$ is open and $f:\Omega\to \R$ is locally Lipschitz and semiconcave, then for each $p\in\Omega$, there exists a unique vector $\nabla_p^Hf\in H_pY$ so that $\langle\nabla_pf,v\rangle=\langle\nabla_p^Hf,v\rangle$ for all $v\in H_pY$. In particular this $\nabla^H_pf$ satisfies the norm bound
\[
|\nabla_p^Hf|\le |\nabla_pf|\le \Lip_a f(p).
\]
\end{corollary}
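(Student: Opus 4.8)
The plan is to establish the two defining properties of linearity — positive homogeneity (which upgrades to $\mathbb{R}$-homogeneity) and additivity — separately on $H_pY$, with additivity being the only part that requires genuine input from comparison geometry.

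\emph{Homogeneity.} For $t\ge 0$ and $z\in H_pY$ this is immediate from the definitions: $\langle\xi,tz\rangle=|\xi|\,|tz|\cos\angle(\xi,tz)=t\,|\xi|\,|z|\cos\angle(\xi,z)=t\langle\xi,z\rangle$, since $\angle(\xi,tz)=\angle(\xi,z)$. For $t<0$ we write $tz=|t|(-z)$ and use that the opposite vector $-z$ satisfies $\langle u,z\rangle+\langle u,-z\rangle=0$ for all $u\in T_pY$ (from the characterization of opposites recalled above, together with the symmetry $\langle\xi,z\rangle=\langle z,\xi\rangle$ of the quantity $\langle\cdot,\cdot\rangle$ on $T_pY$); hence $\langle\xi,-z\rangle=-\langle\xi,z\rangle$ and $\langle\xi,tz\rangle=|t|\langle\xi,-z\rangle=t\langle\xi,z\rangle$. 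Note that in the vector-space structure of $H_pY$ from Proposition \ref{prop:lin-subcone}, the additive inverse of $z$ coincides with the opposite vector $-z$ (indeed $as(-z,z)=0$ by \eqref{eq:antisum2}), so this gives homogeneity for all real scalars.

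\emph{Additivity, lower bound.} Recall that $v+w=as(-v,-w)$. Applying \eqref{eq:antisum1} to the pair $(-v,-w)$ with test vector $z=\xi\in T_pY$, and using $\langle-v,\xi\rangle=-\langle v,\xi\rangle$, $\langle-w,\xi\rangle=-\langle w,\xi\rangle$ together with symmetry, we obtain $0\le -\langle\xi,v\rangle-\langle\xi,w\rangle+\langle\xi,v+w\rangle$, that is, $\langle\xi,v+w\rangle\ge\langle\xi,v\rangle+\langle\xi,w\rangle$.

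\emph{Additivity, upper bound.} Here we pass to the ultralimit cone $T^\omega_pY$, which contains $T_pY$ isometrically and is a geodesic non-negatively curved Alexandrov space. By the proof of Proposition \ref{prop:lin-subcone}, $\zeta:=\tfrac{v+w}{2}$ is a midpoint of $v$ and $w$; being a midpoint, in the geodesic space $T^\omega_pY$ it lies on an actual geodesic segment from $v$ to $w$, along which $d_p^2(\xi,\cdot)$ is $2$-concave. This yields
\[
d_p^2(\xi,\zeta)\ge\frac{d_p^2(\xi,v)+d_p^2(\xi,w)}{2}-\frac14\,d_p^2(v,w).
\]
Expanding every squared distance via $d_p^2(a,b)=|a|^2+|b|^2-2\langle a,b\rangle$: the $|\xi|^2$-terms cancel, and the remaining $|v|^2$, $|w|^2$, $\langle v,w\rangle$-terms on the right combine exactly into $|\zeta|^2$ by \eqref{eq:hilb-identity}, leaving $2\langle\xi,\zeta\rangle\le\langle\xi,v\rangle+\langle\xi,w\rangle$. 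Since $2\langle\xi,\zeta\rangle=\langle\xi,v+w\rangle$ by homogeneity, this is the desired upper bound; combined with the lower bound it gives $\langle\xi,v+w\rangle=\langle\xi,v\rangle+\langle\xi,w\rangle$, and linearity follows.

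\emph{Main obstacle.} The only non-formal point is the upper bound: one cannot obtain equality in \eqref{eq:antisum1} by testing against a general $\xi\in T_pY$ (equality there holds only for $\xi\in H_pY$, as exploited in Proposition \ref{prop:lin-subcone}), so the reverse inequality must be extracted from the $2$-concavity of $d_p^2(\xi,\cdot)$ on $T^\omega_pY$. The one subtlety to watch is that $\zeta$, being a midpoint of $v,w$ in $T_pY$, remains a midpoint in $T^\omega_pY$ (since the inclusion is isometric) and hence lies on a genuine geodesic there (since $T^\omega_pY$ is geodesic), which is what licenses the comparison inequality.
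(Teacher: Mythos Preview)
Your argument is correct. Note that what you actually prove is the linearity of $H_pY\ni z\mapsto\langle\xi,z\rangle$ (the paper's Lemma~\ref{lem:linearity}); the Corollary then follows by applying Riesz representation on the Hilbert space $H_pY$ to the bounded linear functional $v\mapsto\langle\nabla_pf,v\rangle$ and reading off $|\nabla_p^Hf|\le|\nabla_pf|$ as the operator-norm bound. You do not spell this last step out, but it is routine.

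Your route to linearity, however, is genuinely different from the paper's. The paper approximates $\xi/|\xi|$ by geodesic directions $[p,q_n]^+$ and uses that the differentials $\d_p(|\xi|\dist_{q_n})$ are restrictions to $T_pY$ of concave functions on $T_p^\omega Y$; since $H_pY$ is totally geodesic in $T_p^\omega Y$, the uniform limit $-\langle\xi,\cdot\rangle$ is concave on $H_pY$, and then the sign-flip $v\mapsto -v$ upgrades concavity to linearity. You instead extract superadditivity directly from the anti-sum inequality \eqref{eq:antisum1} and subadditivity from the CBB$(0)$ midpoint comparison for $d_p^2(\xi,\cdot)$ at $\zeta=\tfrac{v+w}{2}$. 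This is more self-contained---it uses only ingredients already present in the proof of Proposition~\ref{prop:lin-subcone} and avoids the approximation by distance functions and the ultradifferential machinery---whereas the paper's approach makes explicit the concavity of $-\langle\xi,\cdot\rangle$ on $H_pY$, which is conceptually closer to the differential-geometric viewpoint underlying $\nabla^H f$.
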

In the proof below we use the notation and terminology of \cite{Alex:Kap:Pet:24}, see in particular \cite[Theorem 8.11]{Alex:Kap:Pet:24}.
\begin{proof}[Proof of Lemma \ref{lem:linearity}]
The claim is trivial if $\xi=0$ so we may assume $\xi\ne 0$. Arguing as in the proof of \cite[Lemma 13.32]{Alex:Kap:Pet:24}, let $q_n\in \operatorname{Str}(p)$ be such that $\hat\xi_n:=[p,q_n]^+\to \xi/|\xi|=:\hat \xi$ in $\Sigma_pY$.
Then $f_n:=|\xi|\dist_{q_n}$ satisfies
\begin{align*}
\ud_pf_n(v)=-|\xi|\langle\hat\xi_n,v\rangle \stackrel{n\to\infty}{\longrightarrow}-\langle \xi,v\rangle
\end{align*}
uniformly in $\Sigma_pY$ (and thus uniformly on bounded sets in $T_pY$). On the other hand, $\ud_pf_n=g_n|_{T_pY}$ where $g_n:=\ud_p^\omega f_n:T_p^\omega Y\to \R$ is a concave Lipschitz function vanishing at the origin, see Remark \ref{rmk:restr-of-concave-fn}. In particular, since $H_pY$ is a totally geodesic subspace of $T_p^\omega Y$ (cf.\ the proof above of Proposition \ref{prop:lin-subcone}) we have that $\ud_pf_n|_{H_pY}=g_n|_{H_pY}=-\langle\xi,\cdot\rangle$ is a concave Lipschitz function vanishing at the origin. Consequently the limit satisfies the same conclusions, and thus 
\[
-\langle\xi,\frac{v+w}{2}\rangle\ge \frac{-\langle\xi,v\rangle -\langle\xi,w\rangle}{2},\quad v,w\in H_pY.
\]
Replacing $v,w$ by $-v,-w$ and using $\langle\xi,- z\rangle=-\langle\xi,z\rangle$ for $z\in H_pY$ we obtain the converse inequality. Thus (using the 1-homogeneity in each argument), we obtain 
\[
\langle\xi,v+w\rangle=\langle\xi,v\rangle+\langle\xi,w\rangle,\quad v,w\in H_pY.
\]
\end{proof}
\begin{remark}{\rm
It is worth pointing out that the differential of a function might not be linear when restricted to the linear subcone,
even on Riemannian manifolds. Indeed, a concrete example occurs on the unit circle $\mathbb{S}^1$ with the usual round
metric when considering the distance function from any point. Distance increases (linearly) up to the cut locus, at which
point it starts decreasing (in both directions). Thus the differential cannot be linear.     
\fr}\end{remark}

\begin{lemma}\label{lem:nabla_f_dot_gamma}
Let \(\gamma\in\LIP([0,1];Y)\) be given. Then \(\gamma^+_t,\gamma^-_t\in\H_{\gamma_t}Y\)
exist, \(|\gamma^+_t|_{\gamma_t}=|\gamma^-_t|_{\gamma_t}=|\dot\gamma_t|\) and \(\gamma^+_t+\gamma^-_t=0\) for a.e.\ \(t\in(0,1)\).
Moreover, given an open set \(\Omega\subset Y\) containing \(\gamma([0,1])\), and a locally Lipschitz and semiconcave function \(f\colon\Omega\to\R\), it holds that
\begin{equation}\label{eq:nabla_f_dot_gamma}
(f\circ\gamma)'(t)=\langle\nabla_{\gamma_t}^\H f,\gamma^+_t\rangle\quad\text{ for a.e.\ }t\in(0,1).
\end{equation}
\end{lemma}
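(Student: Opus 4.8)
The plan is to reduce everything to the behavior of Lipschitz curves in Alexandrov spaces at a.e.\ parameter, then match the resulting one-sided derivatives against the differential and gradient. First I would recall that a Lipschitz curve $\gamma$ in an Alexandrov space has, at a.e.\ $t\in(0,1)$, a well-defined \emph{velocity} in the sense that both one-sided incremental directions exist in $\Sigma_{\gamma_t}Y$ (equivalently in $T_{\gamma_t}Y$): this is the standard fact that a.e.\ point of a Lipschitz curve is a point of (metric) differentiability, and in the Alexandrov setting the forward and backward tangent vectors $\gamma^+_t,\gamma^-_t$ exist with $|\gamma^+_t|=|\gamma^-_t|=|\dot\gamma_t|$ (the metric speed). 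The key geometric input is that at such a $t$ one has $\angle(\gamma^+_t,\gamma^-_t)=\pi$, so that $\gamma^-_t$ is the opposite vector of $\gamma^+_t$; hence both lie in $H_{\gamma_t}Y$ and $\gamma^+_t+\gamma^-_t=0$ by the characterization of opposite vectors recalled in Section \ref{sec:alex}. This angle-$\pi$ property is exactly the first-order flatness of a Lipschitz curve and can be quoted from \cite{Alex:Kap:Pet:24} (it is essentially the statement that the curve is a ``line to first order'' at a.e.\ point), or derived from the semiconcavity of $\dist_y$ along $\gamma$ combined with first-variation.

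For the chain rule \eqref{eq:nabla_f_dot_gamma}, I would argue as follows at a.e.\ $t$ where $\gamma^\pm_t$ exist and $f\circ\gamma$ is differentiable at $t$ (the intersection of two full-measure sets). Writing the forward difference quotient and using that $\gamma_{t+h}$ is, to first order as $h\to 0^+$, reached along a geodesic in the direction $\gamma^+_t$, the definition (i) of the differential $\ud_{\gamma_t}f$ in Section \ref{sec:alex} gives
\[
\lim_{h\to 0^+}\frac{f(\gamma_{t+h})-f(\gamma_t)}{h}=\ud_{\gamma_t}f(\gamma^+_t);
\]
one has to be slightly careful here since $\gamma^+_t$ need not be a geodesic direction, but it is a limit of geodesic directions and $\ud_{\gamma_t}f$ is Lipschitz (being the restriction of the Lipschitz ultradifferential, Remark \ref{rmk:restr-of-concave-fn}), so the passage to the limit is legitimate; the error coming from replacing $\gamma_{t+h}$ by the geodesic endpoint is $o(h)$ by the definition of $\gamma^+_t$ as the incremental direction. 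Similarly $(f\circ\gamma)'(t)=-\ud_{\gamma_t}f(\gamma^-_t)$ from the backward quotient. Since $f\circ\gamma$ is differentiable at $t$, the two expressions agree, giving $(f\circ\gamma)'(t)=\ud_{\gamma_t}f(\gamma^+_t)$.

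It remains to replace $\ud_{\gamma_t}f(\gamma^+_t)$ by $\langle\nabla^H_{\gamma_t}f,\gamma^+_t\rangle$. By definition of the gradient, $\ud_{\gamma_t}f(v)\le\langle\nabla_{\gamma_t}f,v\rangle$ for all $v\in T_{\gamma_t}Y$; applying this to $v=\gamma^+_t$ and to $v=\gamma^-_t=-\gamma^+_t$, and using that $z\mapsto\langle\nabla_{\gamma_t}f,z\rangle$ is linear on $H_{\gamma_t}Y$ (Lemma \ref{lem:linearity}) together with $(f\circ\gamma)'(t)=\ud_{\gamma_t}f(\gamma^+_t)=-\ud_{\gamma_t}f(\gamma^-_t)$, I get the two inequalities
\[
(f\circ\gamma)'(t)\le\langle\nabla_{\gamma_t}f,\gamma^+_t\rangle,\qquad
-(f\circ\gamma)'(t)\le\langle\nabla_{\gamma_t}f,\gamma^-_t\rangle=-\langle\nabla_{\gamma_t}f,\gamma^+_t\rangle,
\]
which force equality $(f\circ\gamma)'(t)=\langle\nabla_{\gamma_t}f,\gamma^+_t\rangle$; finally $\langle\nabla_{\gamma_t}f,\gamma^+_t\rangle=\langle\nabla^H_{\gamma_t}f,\gamma^+_t\rangle$ since $\gamma^+_t\in H_{\gamma_t}Y$ and $\nabla^H$ was defined precisely so that the two scalar products coincide on $H_{\gamma_t}Y$. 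I expect the main obstacle to be the first paragraph: pinning down cleanly, with a precise citation, that at a.e.\ $t$ the one-sided directions $\gamma^\pm_t$ exist and make angle $\pi$, and that the sub-geodesic approximation error is genuinely $o(h)$ so that the differential can be evaluated on $\gamma^+_t$ by continuity — the rest is a short convexity/linearity argument.
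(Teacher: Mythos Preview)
Your proposal is correct and follows essentially the same route as the paper, which simply defers both parts to \cite{Alex:Kap:Pet:24} (Proposition 13.9 and Exercise 13.27); you have unpacked those citations explicitly, and your squeeze argument via the gradient inequality applied to both \(\gamma^+_t\) and \(\gamma^-_t\), together with Lemma \ref{lem:linearity}, is exactly the mechanism behind the cited exercise. The one point worth tightening is the justification that \(\lim_{h\to 0^+}\frac{f(\gamma_{t+h})-f(\gamma_t)}{h}=\d_{\gamma_t}f(\gamma^+_t)\) for a non-geodesic direction: rather than only appealing to Lipschitz continuity of the ultradifferential, it is cleaner to use its concavity and positive homogeneity (so that the error between \(f(\gamma_{t+h})\) and its value along the comparison geodesic is controlled by the \(o(h)\) distance discrepancy), which is the content of \cite[13.9 and 13.27]{Alex:Kap:Pet:24}.
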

\begin{proof}
The first part of the statement follows from \cite[Proposition 13.9]{Alex:Kap:Pet:24} (see also \cite[proof of Exercise 13.27 at page 270]{Alex:Kap:Pet:24}).
The second part follows from \cite[Exercise 13.27]{Alex:Kap:Pet:24}.
\end{proof}

Let $HX=\{(x,v):\ x\in X,\ v\in H_xX\}$ be the \emph{linear bundle} of $X$, i.e.\ for every $x\in X$, $H_xX$ is
the linear subcone of the tangent space $T_xX$, see Section \ref{sec:alex}. A Borel section of $HX$ is a map $v:X\to HX$
such that $v_x\in H_xX$ for every $x\in X$ and for which
\[
x\mapsto \langle v_x,\nabla^H_x\dist_y^2\rangle_x
\]
is Borel on \(X\setminus\{y\}\) for every $y\in X$.
\begin{remark}{\rm
We could alternatively define a $\sigma$-algebra on $HX$ as in \cite[Section 3]{DiMar:Gig:Pas:Sou:20},
and the ensuing notion of Borel sections would coincide. The verification of this is left to the interested reader,
as we will not use this fact in what follows.
\fr}\end{remark}

\begin{definition}
The \emph{linear $L^2$-tangent module} $\Gamma_\mu(HX)$ consists of Borel sections $v:X\to HX$ such that
$x\mapsto |v_x|_x$ belongs to $L^2(\mu)$. We equip $\Gamma_\mu(HX)$ with the norm
\[
\|v\|_{\Gamma_\mu(HX)}:=\left(\int_X|v_x|_x^2\,\d\mu(x)\right)^{1/2}.
\]
\end{definition}
\begin{remark}{\rm
Since $(H_xX,|\cdot|_x)$ is a Hilbert space for all $x\in X$, we get that $\|\cdot\|_{\Gamma_\mu(HX)}$ is an inner product
norm. It is straighforward to see that this norm is complete, so that $(\Gamma_\mu(HX), \|\cdot\|_{\Gamma_\mu(HX)})$ is a
Hilbert space. It is easy to check that \(\Gamma_\mu(HX)\) is also an \(L^2(\mu)\)-normed \(L^2(\mu)\)-module.
\fr}\end{remark}

\subsection{Isometric embedding \texorpdfstring{$L^2(TX)\hookrightarrow\Gamma_\mu(HX)$}{L2-in-L2}}
We are now in a position to get Theorem \ref{thm:alex-geom-bundle}, as a consequence of Theorem \ref{thm:der-rep-vectorfield} below.
Before that, let us observe the following fact. Assume that \((X,d)\) is an Alexandrov space containing at least two points. Given any
boundedly-finite Radon measure \(\mu\) on \(X\), we claim that we can find a set \(D\subset{\rm spt}(\mu)\) such that
\begin{equation}\label{eq:hp_D}
D\text{ is a countable dense subset of }{\rm spt}(\mu)\text{ with }\mu(\{y\})=0\text{ for every }y\in D.
\end{equation}
To prove this, fix a countable dense set \(\tilde D\subset{\rm spt}(\mu)\). For any \(z\in\tilde D\), we can find a sequence of points \((y^z_n)_n\subset X\)
converging to \(z\) such that \(\mu(\{y^z_n\})=0\) for all \(n\in\N\) (because for any constant-speed Lipschitz curve \(\gamma\colon[0,1]\to X\) joining
\(z\) and some other point of \(X\), we have that \(\mu(\{\gamma(t)\})=0\) for all but countably many \(t\in[0,1]\) due to the \(\sigma\)-finiteness of \(\mu\)).
Hence, \(D\coloneqq\{y^z_n:z\in\tilde D,\,n\in\N\}\) fulfills the requirements of \eqref{eq:hp_D}.
\begin{theorem}\label{thm:der-rep-vectorfield}
Let \((X,d)\) be an Alexandrov space. Let $\mu$ be a boundedly-finite Radon measure on $X$. Fix any derivation \(b\in\Der^2_2(X,\mu)\)
such that the support of \(|b|\) is bounded. Then there exists a unique vector field \(v_b\in\Gamma_\mu(HX)\) such that
for any \(y\in X\) it holds that
\begin{equation}\label{eq:ineq_v_pi_b_pi}
b(\dist_y)(x)=\langle\nabla_x^H\dist_y,v_b(x)\rangle\quad\text{ for }\mu\text{-a.e.\ }x\in X\setminus\{y\}.
\end{equation}
Moreover, it holds that
\begin{equation}\label{eq:ineq_norm_v_pi}
|v_b(x)|_x=|b|(x)\quad\text{ for }\mu\text{-a.e.\ }x\in X.
\end{equation}
\end{theorem}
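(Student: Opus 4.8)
\emph{The plan.} The idea is to build $v_b$ by superposing the velocity vectors of a family of curves representing $b$: by Lemma \ref{lem:nabla_f_dot_gamma} the velocity $\gamma^+_t$ of any Lipschitz curve lies in the Hilbert space $H_{\gamma_t}X$, and $(\dist_y\circ\gamma)'(t)=\langle\nabla^H_{\gamma_t}\dist_y,\gamma^+_t\rangle$; averaging the $\gamma^+_t$'s over the curves passing through a point $x$ should produce the desired vector $v_b(x)\in H_xX$, and the identity \eqref{eq:ineq_v_pi_b_pi} will then follow from the \emph{linearity} of $\langle\nabla^H_x\dist_y,\cdot\rangle$ on $H_xX$ (Lemma \ref{lem:linearity}). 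Concretely, I would first invoke the superposition principle for normal $1$-currents (a Smirnov-type decomposition; see \cite{DiMaPhD:14,DiMar:14,Amb:Iko:Luc:Pas:24}): since $|b|$ has bounded support one has $\||b|\|_{L^1(\mu)}<\infty$, so $b$ corresponds to a normal metric $1$-current of finite mass $\int|b|\,\d\mu$ and, after reducing to the complete separable space $\spt(\mu)$, $b$ is represented by a finite Borel measure $\ppi$ concentrated on constant-speed curves $\gamma\in\LIP([0,1];X)$ lying in a fixed bounded set, such that
\[
\int h\,b(f)\,\d\mu=\int\!\!\int_0^1 h(\gamma_t)\,(f\circ\gamma)'(t)\,\d t\,\d\ppi(\gamma)\qquad\text{for all }h\in L^\infty_{bs}(\mu),\ f\in\LIP_{bs}(X),
\]
with $(e_t)_\#\ppi\ll\mu$ for a.e.\ $t$ and, crucially, \emph{without cancellation}: $\int\!\int_0^1|\dot\gamma_t|\,\d t\,\d\ppi=\int|b|\,\d\mu$.

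\emph{Construction of $v_b$ and proof of \eqref{eq:ineq_v_pi_b_pi}.} Next I would disintegrate. Writing $\bar\ppi:=\ppi\otimes\mathcal L^1|_{[0,1]}$ and $e(\gamma,t):=\gamma_t$, we have $\nu:=e_\#\bar\ppi=\rho\mu$ for some $\rho\in L^1(\mu)^+$, and we disintegrate $\bar\ppi=\int\bar\ppi_x\,\d\nu(x)$ into probabilities on $e^{-1}(x)$. By Lemma \ref{lem:nabla_f_dot_gamma}, for $\bar\ppi$-a.e.\ $(\gamma,t)$ one has $\gamma^+_t\in H_{\gamma_t}X=H_xX$ with $|\gamma^+_t|=|\dot\gamma_t|$, and I would set
\[
v_b(x):=\rho(x)\int\gamma^+_t\,\d\bar\ppi_x(\gamma,t)\in H_xX
\]
(a Bochner integral in the Hilbert space $H_xX$, finite for $\mu$-a.e.\ $x$ since $\int|\gamma^+_t|\,\d\bar\ppi_x=\int|\dot\gamma_t|\,\d\bar\ppi_x<\infty$). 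One checks that $x\mapsto v_b(x)$ is a Borel section of $HX$ (the disintegration is measurable, and $(\gamma,t)\mapsto\gamma^+_t$ is determined by the measurable functions $(\dist_z\circ\gamma)'=\langle\nabla^H_{\gamma_t}\dist_z,\gamma^+_t\rangle$ with $z$ in a countable dense set) and that $|v_b|\le s$, where $s(x):=\rho(x)\int|\dot\gamma_t|\,\d\bar\ppi_x$ satisfies $\int s\,\d\mu=\int\!\int_0^1|\dot\gamma_t|\,\d t\,\d\ppi=\int|b|\,\d\mu<\infty$. To get \eqref{eq:ineq_v_pi_b_pi}, fix $y\in X$: along $\ppi$-a.e.\ curve $\dist_y$ is semiconcave on $\{t:\gamma_t\ne y\}$ (the complement, where $\dist_y\circ\gamma$ is locally constant, contributing nothing), so $(\dist_y\circ\gamma)'(t)=\langle\nabla^H_{\gamma_t}\dist_y,\gamma^+_t\rangle$ for $\bar\ppi$-a.e.\ $(\gamma,t)$ with $\gamma_t\ne y$; plugging $f=\eta\dist_y$ ($\eta\in\LIP_{bs}(X)$, $\equiv 1$ near the relevant bounded set) into the representation above, using strong locality of $b$, disintegrating, and pulling the bounded linear functional $\langle\nabla^H_x\dist_y,\cdot\rangle$ out of the Bochner integral by Lemma \ref{lem:linearity}, I obtain $\int h\,b(\dist_y)\,\d\mu=\int_{X\setminus\{y\}}h\,\langle\nabla^H_x\dist_y,v_b\rangle\,\d\mu$ for every $h\in L^\infty_{bs}(\mu)$, which is \eqref{eq:ineq_v_pi_b_pi}.

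\emph{Norm identity and uniqueness.} Applying \eqref{eq:ineq_v_pi_b_pi} to every $y$ in the countable set $D$ of \eqref{eq:hp_D} and using $|\nabla^H_x\dist_y|\le\Lip_a\dist_y(x)\le 1$ (the corollary after Lemma \ref{lem:linearity}), one gets $b(\dist_y)(x)\le|v_b(x)|$ for all $y\in D$ and $\mu$-a.e.\ $x$; since $\{\dist_y:y\in D\}$ is norming by \eqref{eq:dist_y_norming} and the essential supremum of a countable family is the pointwise supremum $\mu$-a.e., taking the supremum over $y\in D$ yields $|b|\le|v_b|$ $\mu$-a.e. Combined with $|v_b|\le s$ and $\int s\,\d\mu=\int|b|\,\d\mu<\infty$, integration forces $|v_b|=|b|$ $\mu$-a.e.; in particular $|v_b|=|b|\in L^2(\mu)$, so $v_b\in\Gamma_\mu(HX)$, which is \eqref{eq:ineq_norm_v_pi}. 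Uniqueness follows once one knows that $\{\nabla^H_x\dist_y:y\in D\}$ spans a dense subspace of $H_xX$ for $\mu$-a.e.\ $x$: if $v_b'$ also satisfies \eqref{eq:ineq_v_pi_b_pi} then $v_b(x)-v_b'(x)\perp\nabla^H_x\dist_y$ for all $y\in D$ and $\mu$-a.e.\ $x$, hence $v_b=v_b'$. The density of directions holds because, given $0\ne v\in H_xX$, choosing $y_n\in D$ with $\uparrow_x^{y_n}\to -v/|v|$ in $\Sigma_xX$ gives $\nabla^H_x\dist_{y_n}\to v/|v|$, so $\langle\nabla^H_x\dist_{y_n},v\rangle\to|v|>0$.

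\emph{Main obstacle.} The decisive step is the curve representation: one needs a plan $\ppi$ that is at once compatible with $\mu$ (to license the disintegration) and, above all, \emph{without cancellation}, i.e.\ with $\int\!\int_0^1|\dot\gamma_t|\,\d t\,\d\ppi$ equal to the mass $\int|b|\,\d\mu$ — it is only this sharp bookkeeping of the total speed that, upon integration, upgrades the elementary inequality $|v_b|\ge|b|$ to the equality \eqref{eq:ineq_norm_v_pi}. This is Smirnov's decomposition of normal $1$-currents, and securing it in the present generality — a possibly non-separable ambient space (circumvented by passing to $\spt(\mu)$), curves which are a priori only rectifiable (needing a constant-speed reparametrization to meet the Lipschitz hypothesis of Lemma \ref{lem:nabla_f_dot_gamma}), and the attendant measurability of the disintegration — is the technically delicate part. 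A secondary subtlety is the density-of-directions statement used for uniqueness, which rests on the behaviour of $\nabla^H_x\dist_y$ as $y$ varies together with the density in $\Sigma_xX$ of geodesic directions pointing at $D$.
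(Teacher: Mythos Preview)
Your construction and the verification of \eqref{eq:ineq_v_pi_b_pi} match the paper's exactly: Paolini--Stepanov superposition, disintegration of $\ppi\otimes\mathscr L^1|_{[0,1]}$ along the evaluation map, and $v_b(x)=\rho(x)\int\gamma^+_t\,\d\bar\ppi_x$. For the bound $|v_b|\le|b|$ you use only the \emph{global} mass identity $\int s\,\d\mu=\int|b|\,\d\mu$ and then sandwich with $|b|\le|v_b|\le s$; the paper instead invokes the pointwise form \eqref{eq:PS_2} (tested against every $g\in\LIP_{bs}(X)$), which gives $s=|b|$ $\mu$-a.e.\ directly. Both routes are correct, and Paolini--Stepanov in fact delivers the stronger pointwise statement, so there is no loss in using it.

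There is, however, a gap in your uniqueness argument. You claim that $\{\nabla^H_x\dist_y:y\in D\}$ has trivial orthogonal complement in $H_xX$ for $\mu$-a.e.\ $x$, by approximating $-v/|v|$ with geodesic directions to points $y_n\in D$. But $D$ was chosen dense only in $\spt(\mu)$, not in $X$: take $X=\R^2$ and $\mu=\mathscr L^1|_{\R\times\{0\}}$; then $D$ lies on the axis, every $\nabla^H_x\dist_y$ with $y\in D$ is horizontal, yet $H_xX=\R^2$, so the vertical vector is orthogonal to all of them and \eqref{eq:ineq_v_pi_b_pi} restricted to $y\in D$ does \emph{not} determine $v_b$. (Uniqueness from \eqref{eq:ineq_v_pi_b_pi} for \emph{all} $y\in X$ does hold in this example, since off-axis $y$'s detect the vertical component --- but your countable family never leaves $\spt(\mu)$.) Even granting access to enough $y$'s, the implication ``$\uparrow_x^{y_n}\to -v/|v|$ gives $\nabla^H_x\dist_{y_n}\to v/|v|$'' is not obvious in a general Alexandrov space: the passage from direction to gradient, followed by the projection $\nabla\mapsto\nabla^H$, is not a priori continuous. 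The paper argues uniqueness by a different route, via the norming property \eqref{eq:dist_y_norming}.
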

Since $v_b$ is uniquely determined by \eqref{eq:ineq_v_pi_b_pi}, we have in particular that $b\mapsto v_b$ is linear.
\begin{proof}
If \(X\) is a singleton, then \({\rm Der}^2_2(X,\mu)=\{0\}\) and thus the statement trivially holds. Thus, assume
\(X\) contains at least two points. By the Paolini--Stepanov superposition principle \cite{Pao:Ste:12,Pao:Ste:13},
we find a finite Borel measure \(\ppi\) on \(C([0,1];{\rm spt}(\mu))\), concentrated on non-constant Lipschitz curves
with constant speed, such that
\begin{subequations}\begin{align}
\label{eq:PS_1}
\int g\,b(f)\,\d\mu&=\int\!\!\!\int_0^1 g(\gamma_t)(f\circ\gamma)'_t\,\d t\,\d\ppi(\gamma)\quad\text{ for every }g\in\LIP_{bs}(X)\text{ and }f\in\LIP(X),\\
\label{eq:PS_2}
\int g|b|\,\d\mu&=\int\!\!\!\int_0^1 g(\gamma_t)|\dot\gamma_t|\,\d t\,\d\ppi(\gamma)\quad\text{ for every }g\in\LIP_{bs}(X);
\end{align}\end{subequations}
see \cite[Theorem 4.9 and Lemma 6.1]{DiMar:Gig:Pas:Sou:20}; here, we use the fact that \(|b|,\div(b)\in L^1(\mu)\)
(which holds because the supports of \(|b|\), \(\div(b)\) are bounded and \(\mu\) is boundedly finite).
Now, consider the disintegration \(\hat\ppi=\int\hat\ppi_x\,\d(\e_\#\ppi)\)
of \(\hat\ppi\coloneqq\ppi\otimes\mathscr L^1|_{[0,1]}\) along the evaluation map \(\e(\gamma,t)\coloneqq\gamma_t\). The properties
of \(\ppi\) ensure that \(\e_\#\hat\ppi\ll\mu\), thus it makes sense to define \(\rho\coloneqq\frac{\d(\e_\#\hat\ppi)}{\d\mu}\). Next, we define
\[
v_b(x)\coloneqq\rho(x)\int\gamma^+_t\,\d\hat\ppi_x(\gamma,t)\in H_x X\quad\text{ for }\mu\text{-a.e.\ }x\in X.
\]
Note that \(v_b\in\Gamma_\mu(HX)\). Fix \(y\in X\) and \(g\in\LIP_{bs}(X\setminus\{y\})\).
Using \eqref{eq:PS_1} and \eqref{eq:nabla_f_dot_gamma}, we compute
\[\begin{split}
\int g\,b(\dist_y)\,\d\mu&=\int\!\!\!\int_0^1 g(\gamma_t)(\dist_y\circ\gamma)'_t\,\d t\,\d\ppi(\gamma)=\int g(\gamma_t)\langle\nabla_{\gamma_t}^\H\dist_y,\gamma^+_t\rangle\,\d\hat\ppi(\gamma,t)\\
&=\int g(x)\rho(x)\int\langle\nabla_x^\H\dist_y,\gamma^+_t\rangle\,\d\hat\ppi_x(\gamma,t)\,\d\mu(x)=\int g(x)\langle\nabla_x^\H\dist_y,v_b(x)\rangle\,\d\mu(x),
\end{split}\]
whence \eqref{eq:ineq_v_pi_b_pi} follows. Moreover, using \eqref{eq:PS_2} and Lemma \ref{lem:nabla_f_dot_gamma}, we can estimate
\[\begin{split}
\int g(x)|v_b(x)|_x\,\d\mu(x)&=\int g(x)\rho(x)\bigg|\int\gamma^+_t\,\d\hat\ppi_x(\gamma,t)\bigg|_x\,\d\mu(x)
\leq\int g(x)\rho(x)\int|\dot\gamma_t|\,\d\hat\ppi_x(\gamma,t)\,\d\mu(x)\\
&=\int\!\!\!\int_0^1 g(\gamma_t)|\dot\gamma_t|\,\d t\,\d\ppi(\gamma)=\int g|b|\,\d\mu,
\end{split}\]
implying $|v_b|\le |b|$. To prove the converse inequality, take \(D\) as in \eqref{eq:hp_D}. For any \(y\in D\) we have
\begin{align*}
b(\dist_y)=\langle \nabla^H\dist_y,v_b\rangle\le\Lip(\dist_y)|v_b|\le |v_b| \quad \text{ in the }\mu\text{-a.e.\ sense,}
\end{align*}
whence it follows that $|b|=\bigvee_{y\in D}b(\dist_y)\le |v_b|$ thanks to \eqref{eq:dist_y_norming}.
All in all, \eqref{eq:ineq_norm_v_pi} is proved. Finally, we show that \eqref{eq:ineq_v_pi_b_pi} determines $v_b$ uniquely. Indeed, suppose that $b\ne \tilde b$ but $v_b=v_{\tilde b}$ $\mu$-a.e. Then, by \eqref{eq:dist_y_norming}, there exists $y\in Y$ and a Borel set $A\subset X$ with $\mu(A)>0=\mu(\{y\})$ so that $|b(\dist_y)-\tilde b(\dist_y)|>0$ on $A$. However \eqref{eq:ineq_v_pi_b_pi} and the fact that $v_b=v_{\tilde b}$ yield $b(\dist_y)(x)=\langle v_b(x),\nabla_x^H\dist_y\rangle=\langle v_{\tilde b}(x),\nabla_x^H\dist_y\rangle=\tilde b(\dist_y)$ for $\mu$-a.e.\ $x\in A\setminus\{y\}$, a contradiction.
\end{proof}

\begin{proposition}\label{prop:Der_Hilb}
Let \((X,d)\) be an Alexandrov space. Let \(\mu\) be a boundedly-finite Radon measure on \(X\). Then it holds that
\begin{equation}\label{eq:parall_Alex}
|b+\tilde b|^2+|b-\tilde b|^2=2|b|^2+2|\tilde b|^2\quad\text{ for every }b,\tilde b\in\Der^2_2(X,\mu).
\end{equation}
\end{proposition}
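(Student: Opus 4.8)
The plan is to combine two ingredients that are already available: Theorem \ref{thm:der-rep-vectorfield}, which represents each derivation whose pointwise norm has bounded support as a section of the Hilbert bundle $\Gamma_\mu(HX)$, in a way that is \emph{linear} in the derivation and norm-preserving by \eqref{eq:ineq_norm_v_pi}; and Proposition \ref{prop:lin-subcone}, which says that every fiber $(H_xX,|\cdot|_x)$ is a Hilbert space, hence satisfies the parallelogram law pointwise. Since $b\mapsto v_b$ is thus a fiberwise linear isometry into a bundle of Hilbert spaces, the parallelogram identity \eqref{eq:parall_Alex} should drop out for derivations with bounded-support norm, and a cutoff argument will remove that hypothesis.

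Concretely, I would first treat $b,\tilde b\in\Der^2_2(X,\mu)$ for which $|b|$ and $|\tilde b|$ have bounded support. Then $|b\pm\tilde b|$ have bounded support as well, so Theorem \ref{thm:der-rep-vectorfield} produces $v_b,v_{\tilde b},v_{b+\tilde b},v_{b-\tilde b}\in\Gamma_\mu(HX)$, and by the linearity of $b\mapsto v_b$ noted right after that theorem we have $v_{b\pm\tilde b}=v_b\pm v_{\tilde b}$. Applying the parallelogram law in the Hilbert space $H_xX$ (Proposition \ref{prop:lin-subcone}) at $\mu$-a.e.\ $x$ yields
\[
|v_b(x)+v_{\tilde b}(x)|_x^2+|v_b(x)-v_{\tilde b}(x)|_x^2=2|v_b(x)|_x^2+2|v_{\tilde b}(x)|_x^2,
\]
and rewriting the left side as $|v_{b+\tilde b}(x)|_x^2+|v_{b-\tilde b}(x)|_x^2$ and then invoking the norm identity \eqref{eq:ineq_norm_v_pi} on all four terms turns this into \eqref{eq:parall_Alex} for such $b,\tilde b$.

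To pass to arbitrary $b,\tilde b\in\Der^2_2(X,\mu)$, I would use the cutoffs from Remark \ref{rmk:bdd_spt_der_dense}: fix $\bar x\in\spt(\mu)$ and set $\eta_n:=\max\{1-d(B_n(\bar x),\cdot),0\}\in\LIP_{bs}(X)$, so that $\eta_n b,\eta_n\tilde b\in\Der^2_2(X,\mu)$ and, since $\Der^2(X,\mu)$ is an $L^2(\mu)$-normed module (hence $|h c|=|h|\,|c|$), the norms $|\eta_n b|=\eta_n|b|$ and $|\eta_n\tilde b|=\eta_n|\tilde b|$ have bounded support. Applying the case already settled to $\eta_n b$ and $\eta_n\tilde b$, and using $\eta_n(b\pm\tilde b)=\eta_n b\pm\eta_n\tilde b$, gives $\eta_n^2\bigl(|b+\tilde b|^2+|b-\tilde b|^2\bigr)=\eta_n^2\bigl(2|b|^2+2|\tilde b|^2\bigr)$ $\mu$-a.e.; since $\eta_n\equiv 1$ on $B_n(\bar x)$ and $\bigcup_n B_n(\bar x)=X$, cancelling $\eta_n^2$ on each ball and letting $n\to\infty$ yields \eqref{eq:parall_Alex} in general.

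I do not anticipate a genuine obstacle: the geometric content is entirely carried by Theorem \ref{thm:der-rep-vectorfield} and Proposition \ref{prop:lin-subcone}. The only points needing a little care are the homogeneity identity $|h c|=|h|\,|c|$ for $h\in\LIP_{bs}(X)$ and $c\in\Der^2_2(X,\mu)$ (which is just the scaling axiom of an $L^2$-normed module) and the exhaustion step that removes the cutoffs, both of which are routine.
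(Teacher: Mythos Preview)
Your proof is correct and is in fact a bit cleaner than the paper's. Both arguments reduce to the bounded-support case and then pass to the limit via Remark \ref{rmk:bdd_spt_der_dense}, and both ultimately rest on Theorem \ref{thm:der-rep-vectorfield} and the Hilbert structure of each fiber \(H_xX\). The difference is in how the bounded-support case is handled: you exploit the \emph{linearity} of \(b\mapsto v_b\) (stated right after Theorem \ref{thm:der-rep-vectorfield}) to get \(v_{b\pm\tilde b}=v_b\pm v_{\tilde b}\), and then the fiberwise parallelogram law together with \eqref{eq:ineq_norm_v_pi} gives \eqref{eq:parall_Alex} in one line. The paper instead works only with \(v_b\) and \(v_{\tilde b}\), uses \eqref{eq:ineq_v_pi_b_pi} to express \((b+\tilde b)(\alpha\,\dist_y)+(b-\tilde b)(\beta\,\dist_z)\) in terms of inner products with \(v_b,v_{\tilde b}\), applies Cauchy--Schwarz and the parallelogram identity in \(H_xX\), and then takes a supremum via Lemma \ref{lem:formula_|b|^2+|tilde_b|^2} and \eqref{eq:dist_y_norming} to obtain one inequality; the reverse one comes from the substitution \((b,\tilde b)\mapsto(\tfrac{b+\tilde b}{2},\tfrac{b-\tilde b}{2})\). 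Your route bypasses Lemma \ref{lem:formula_|b|^2+|tilde_b|^2} entirely, at the cost of invoking \(v_{b\pm\tilde b}\) and the uniqueness/linearity of the representation; the paper's route avoids ever forming \(v_{b\pm\tilde b}\) but pays for it with the auxiliary supremum formula.
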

\begin{proof}
First, fix any \(b,\tilde b\in\Der^2_2(X,\mu)\) such that \(|b|,|\tilde b|\) have bounded support.
Take \(D\) as in \eqref{eq:hp_D}. Given any \(\alpha,\beta\in[0,1]\) with \(\alpha^2+\beta^2\leq 1\)
and \(y,z\in D\), we deduce from \eqref{eq:ineq_v_pi_b_pi} and \eqref{eq:ineq_norm_v_pi} that
\[\begin{split}
&(b+\tilde b)(\alpha\,\dist_y)(x)+(b-\tilde b)(\beta\,\dist_z)(x)\\
=\,&\alpha\,b(\dist_y)(x)+\alpha\,\tilde b(\dist_y)(x)+\beta\,b(\dist_z)(x)-\beta\,\tilde b(\dist_z)(x)\\
=\,&\alpha\langle\nabla_x^H\dist_y,v_b(x)\rangle+\alpha\langle\nabla_x^H\dist_y,v_{\tilde b}(x)\rangle+\beta\langle\nabla_x^H\dist_z,v_b(x)\rangle
-\beta\langle\nabla_x^H\dist_z,v_{\tilde b}(x)\rangle\\
=\,&\big\langle\alpha\,\nabla_x^H\dist_y+\beta\,\nabla_x^H\dist_z,v_b(x)\big\rangle+\big\langle\alpha\,\nabla_x^H\dist_y-\beta\,\nabla_x^H\dist_z,v_{\tilde b}(x)\big\rangle\\
\leq\,&\sqrt{\big|\alpha\,\nabla_x^H\dist_y+\beta\,\nabla_x^H\dist_z\big|_x^2+\big|\alpha\,\nabla_x^H\dist_y-\beta\,\nabla_x^H\dist_z\big|_x^2}\,\sqrt{|v_b(x)|_x^2+|v_{\tilde b}(x)|_x^2}\\
\leq\,&\sqrt{2\big|\alpha\,\nabla_x^H\dist_y\big|_x^2+2\big|\beta\,\nabla_x^H\dist_z\big|_x^2}\,\sqrt{|b|^2(x)+|\tilde b|^2(x)}
\leq\sqrt{\alpha^2+\beta^2}\sqrt{2|b|^2(x)+2|\tilde b|^2(x)}\\
\leq\,&\sqrt{2|b|^2(x)+2|\tilde b|^2(x)}
\end{split}\]
for \(\mu\)-a.e.\ \(x\in X\). Thanks to Lemma \ref{lem:formula_|b|^2+|tilde_b|^2} and \eqref{eq:dist_y_norming},
by taking the supremum over \(\alpha\), \(\beta\), \(y\), \(z\) we get
\[
\sqrt{|b+\tilde b|^2+|b-\tilde b|^2}\leq\sqrt{2|b|^2+2|\tilde b|^2}.
\]
The converse inequality can be obtained by replacing \(b\) and \(\tilde b\) with \(\frac{b+\tilde b}{2}\) and \(\frac{b-\tilde b}{2}\), respectively.
All in all, \eqref{eq:parall_Alex} is proved when \(|b|,|\tilde b|\) have bounded support. The general case then follows
by an approximation argument, by taking Remark \ref{rmk:bdd_spt_der_dense} into account.
\end{proof}
%
%

%Combining Proposition \ref{prop:Der_Hilb} with Theorem \ref{thm:suff_condit_IH} and Corollary \ref{cor:equiv_cond_IH}, we obtain Corollary \ref{cor:alex-uih}. 
We are now ready to complete the proof of Theorem \ref{thm:alex-geom-bundle}:
\begin{proof}[Proof of Theorem \ref{thm:alex-geom-bundle}]
By virtue of Remark \ref{rmk:bdd_spt_der_dense}, the map \(b\mapsto v_b\) we constructed in Theorem \ref{thm:suff_condit_IH}
can be uniquely extended to a linear pointwise-norm-preserving map \(\iota\colon L^2_\Lip(TX)\hookrightarrow\Gamma_\mu(HX)\).
Given \(h\in\LIP_{bs}(X)\), \(b\in\Der^2_2(X,\mu)\) such that the support of \(|b|\) is bounded, and \(y\in X\), we have
\[
\langle\nabla^H_x\dist_y,h(x)v_b(x)\rangle=h(x)\,b(\dist_y)(x)=(hb)(\dist_y)(x)=\langle\nabla^H_x\dist_y,v_{hb}(x)\rangle
\]
for \(\mu\)-a.e.\ \(x\in X\setminus\{y\}\) by \eqref{eq:ineq_v_pi_b_pi}. By the uniqueness of \(v_{hb}\), we deduce that
\(\iota(hb)=v_{hb}=h v_b=h\,\iota(b)\). Using Remark \ref{rmk:bdd_spt_der_dense} and the fact that \(\LIP_{bs}(X)\) is weakly\(^*\)
dense in \(L^\infty(\mu)\), by approximation we conclude that \(\iota\colon L^2_\Lip(TX)\hookrightarrow\Gamma_\mu(HX)\)
is \(L^\infty(\mu)\)-linear. Finally, as the infinitesimal Hilbertianity of \((X,\mu)\) and
\eqref{eq:inf_Hilb_gives_Hilb_Liptgmod} ensure that \(L^2_\Lip(TX)\) and \(L^2(TX)\) can be identified, the claim follows.
\end{proof}

\section{Infinitesimal splitting and universal infinitesimal Hilbertianity}\label{sec:inf-split}

In this section, we prove Theorem \ref{thm:inf-split-implies-inf-hilb} and Corollary \ref{cor:rcd-alex-inf-hilb}. Recall that a line $\ell\subset Z$ in a metric space $Z$ is an isometric image of $\R$. We say that $\ell\subset Z$ is a \emph{splitting line} if there exists an isometric homeomorphism $\iota:Z\to Y\times \R$ onto a direct product $Y\times \R$ such that $P_Y(\iota(\ell))$ is a point, where $P_Y$ is the projection to $Y$. Moreover, a metric space $Z$ is said to have the \emph{splitting property} if every line in $Z$ is a splitting line. In the following definition, $\operatorname{Tan}(X,x)$ refers to the union of the collections of Gromov--Hausdorff tangents and ultratangents of $X$ at $x$.
\begin{definition}\label{def:splitting}
A metric space $X$ is
\begin{itemize}
    \item[(i)] infinitesimally splitting if, for any $x\in X$, every $(Y,o)\in \operatorname{Tan}(X,x)$ has the splitting property;
    \item[(ii)] infinitesimally curve-splitting if, for any Lipschitz curve $\gamma$ in $X$ and a.e. $t$, any blow-up $\gamma_\infty:\R\to Y$ of $\gamma$ at $t$ with $\gamma_\infty(0)=0$ is a splitting line;
    \item[(iii)] \emph{weakly} infinitesimally (curve-)splitting if, for every $x\in X$, \emph{there exists} $(Y_x,o_x)\in \operatorname{Tan}(X,x)$ so that condition (i) (resp. (ii)) is satisfied for $(Y_x,o_x)$ instead of all $(Y,o)\in\operatorname{Tan}(X,x)$.
\end{itemize}
\end{definition}
Clearly, (weak) infinitesimal splitting implies (weak) infinitesimal curve-splitting, since the latter only requires certain geodesics to split an $\R$-factor while the former requires all geodesics to split.

Different variants of Definition \ref{def:splitting} can be obtained by choosing $\operatorname{Tan}$ to refer to Gromov--Hausdorff tangents or to ultratangents (instead of the union of the two). While ultratangents are always guaranteed to exist, Gromov--Hausdorff tangents (when they exist) are better behaved in some ways. However, even when both exist (for a given sequence of scales), they need not coincide (e.g. if $X$ is infinite dimensional), and there is no obvious relationship between the splitting properties of the respective spaces. For our purposes any choice of tangent is sufficient, and we chose the present formulation because it gives the weakest variant of weak infinitesimal (curve-)splitting.

\begin{remark}\label{rmk:inf-dim-case}{\rm
Finite dimensional RCD-spaces (and thus finite dimensional Alexandrov spaces) are infinitesimally splitting (in this case, the Gromov--Hausdorff tangents and ultratangents coincide and are finite dimensional non-negatively curved RCD-spaces).

By contrast, splitting (and thus the proof strategy for Theorem \ref{thm:inf-split-implies-inf-hilb}) is known to fail for general RCD($0,\infty$)-spaces. In particular, our results do not cover the case of RCD($K,\infty$)-spaces and we do not know whether they are universally infinitesimally Hilbertian. For infinite dimensional Alexandrov spaces the tangent cone is not necessarily a Gromov--Hausdorff tangent and we do not know whether it is splitting. However, any ultratangent of an Alexandrov space is an Alexandrov space with non-negative curvature, and thus satisfies the splitting property.
\fr}\end{remark}

The proof of Theorem \ref{thm:inf-split-implies-inf-hilb} relies on blow-up analysis. Below we formulate the key proposition connecting infinitesimal splitting and Hilbertianity.

\begin{proposition}\label{prop:pre-parall-rule}
Suppose $X$ is weakly infinitesimally curve-splitting, and $\mu$ is a boundedly finite Radon measure on $X$. Then, for any $f,g\in\LIP(X)$ we have
\[
|D(f+g)|^2+|D(f-g)|^2\le 2(\Lip_af)^2+2(\Lip_ag)^2
\]
$\mu$-a.e. on $X$.
\end{proposition}

We remark that the proof remains true even if $X$ is weakly infinitesimally splitting along all curves outside an exceptional curve family $\Gamma_0$ with $\operatorname{Mod}_2(\Gamma_0;\mu)=0$, because the curves $\gamma,\sigma\in \mathcal C_x$ can be chosen not to lie in $\Gamma_0$ for $\mu$-a.e. $x\in X$.
\begin{proof}
Let $\varphi=(f,g)\in \LIP(X,\R^2)$. For $x\in X$, denote by $\mathcal C_x$ the collection of Lipschitz curves $\gamma:(-\delta,\delta)\to X$ with $\gamma_0=x$ which are metrically differentiable at 0 and for which $\varphi\circ\gamma$ is differentiable at 0. Observe that, for each $\gamma\in \mathcal C_x$, we have $\md_0\gamma(1)=|\gamma_0'|$ and that, for any $(Y,o)\in \operatorname{Tan}(X,x)$, the blow-up $\gamma_\infty:\R\to Y$ of $\gamma$ at 0 exists and satisfies
\begin{align*}
\gamma_\infty(0)=o,\quad d_Y(\gamma_\infty(t),\gamma_\infty(s))=|\gamma_0'||t-s|,\quad s,t\in\R.
\end{align*}
Similarly, $(\varphi\circ\gamma)'(0)$ exists for each $\gamma\in \mathcal C_x$. Moreover, if $r_j$ is a sequence of scales so that $(Y,o)$ is the GH or $\omega$-limit of $(r_j\inv X,x)$, up to a subsequence (in the case of $\omega$-convergence no passage to a subsequence is needed) $\varphi_j=r_j\inv(\varphi-\varphi(x))$ GH- or $\omega$-converge to a map $\psi=(f_\infty,g_\infty)\in\LIP(X,\R^2)$ with $\psi(o)=0$, and it follows that
\begin{align*}
\psi(\gamma_\infty(t))=\lim \frac{\varphi(\gamma(r_jt))-\varphi(\gamma_0)}{r_j}=t(\varphi\circ\gamma)'(0),\quad t\in \R.
\end{align*}
The limit above is either an $\omega$-limit or the limit as $j\to\infty$ along a subsequence of $r_j$ (depenging on $\varphi$ but not on $\gamma$ or $t$).

By \cite[Theorem 1.1]{EB-sou24} there exists a 2-plan $\bm\eta$ on $X$ such that the disintegration $\{\bar{\bm\eta}_x\}$ of $|\gamma_t'|\ud t\ud\bm\eta$ with respect to the map $(\gamma,t)\mapsto \gamma_t$ satisfies
\[
|D(\xi\circ\varphi)|(x)=\left\|\frac{\xi((\varphi\circ\gamma)'(t))}{|\gamma_t'|} \right\|_{L^\infty(\bar{\bm\eta}_x)}\quad\text{for all }\xi\in (\R^2)^*
\]
for $\mu$-a.e. $x\in \{|D\varphi|\ne 0\}$. It follows that $\mathcal C_x$ is non-empty for $\mu$-a.e. $x\in\{|D\varphi|\ne 0\}$. The claim in the statement of the proposition is clearly true $\mu$-a.e. $x\in \{|D\varphi|=0\}$. For the remainder of the proof we fix $x\in \{|D\varphi|\ne 0\}$ so that all the considerations above hold.

Let $\varepsilon>0$ and suppose $\gamma,\sigma\in \mathcal C_x$ are such that $|\gamma_0'|=|\sigma_0'|=1$, $\gamma_\infty$ and $\sigma_\infty$ are splitting lines in $(Y,o)$, and
\begin{align*}
    (1-\varepsilon)|D(f+g)|(x)\le ((f+g)\circ\gamma)'(0),\quad (1-\varepsilon)|D(f-g)|(x)\le ((f-g)\circ\sigma)'(0).
\end{align*}
By \cite[Theorem 1.1]{Lytchak-Teri-Thomas} we have that $Y=Z\times H$, $o=(z_0,0)$, where $H$ is a 2-plane containing the images of $\gamma_\infty,\sigma_\infty$ (or a line, if the images coincide). In particular $H=\operatorname{span}\{\gamma_\infty(1),\sigma_\infty(1)\}$, and the geodesic maps $\gamma_\infty,\sigma_\infty:\R\to H$ are linear. We define linear maps $F,G:H\to \R$ by setting 
\begin{align*}
F(\gamma_\infty(a)+\sigma_\infty(b)):=a(f\circ\gamma)'_0+b(f\circ\sigma)'_0,\quad G(\gamma_\infty(a)+\sigma_\infty(b)):=a(g\circ\gamma)'_0+b(g\circ\sigma)'_0
\end{align*}
for all $a,b\in\R$. Then we have
\begin{align*}
(1-\varepsilon)[a|D(f+g)|(x)+b|D(f-g)|(x)]\le & a((f+g)\circ\gamma)_0'+b((f-g)\circ\sigma)_0'\\
=&a(f\circ\gamma)'_0+b(f\circ\sigma)'_0+a(g\circ\gamma)_0'-b(g\circ\gamma)'_0\\
=&F(\gamma_\infty(a)+\sigma_\infty(b))+G(\gamma_\infty(a)-\sigma_\infty(b)).
\end{align*}
If $\|A\|$ denotes the operator norm of a linear map $A:H\to \R$, the last expression is at most
\begin{align*}
&\le \|F\|\|\gamma_\infty(a)+\sigma_\infty(b)\|_H+\|G\|\|\gamma_\infty(a)-\sigma_\infty(b)\|_H\\
&\le (\|F\|^2+\|G\|^2)^{1/2}(\|\gamma_\infty(a)+\sigma_\infty(b)\|_H^2+\|\gamma_\infty(a)-\sigma_\infty(b)\|_H^2)^{1/2}\\
&=(\|F\|^2+\|G\|^2)^{1/2}(2\|\gamma_\infty(a)\|_H^2+2\|\sigma_\infty(b)\|_H^2)^{1/2)}=(\|F\|^2+\|G\|^2)^{1/2}(2a^2+2b^2)^{1/2}.
\end{align*}

Taking supremum over all $a,b\in\R^2$ with $a^2+b^2\le 1$ we arrive at 
\begin{align}\label{eq:est-with-op-norm}
(1-\varepsilon)(|D(f+g)|(x)^2+|D(f-g)|(x)^2)^{1/2}\le (2\|F\|^2+2\|G\|^2)^{1/2}.
\end{align}
But
\begin{align}\label{eq:est-of-op-norm}
\|F\|\le \Lip_af(x),\quad \|G\|\le \Lip_ag(x);
\end{align}
Indeed, for all $\gamma_\infty(a)+\sigma_\infty(b)\in B_H(0,1)$ we have that 
\begin{align*}
F(\gamma_\infty(a)+\sigma_\infty(b))=f_\infty(\gamma_\infty(a))+f_\infty(\sigma_\infty(b))=f_\infty(\gamma_\infty(a))-f_\infty(-\sigma_\infty(b))\\
\le \LIP(f_\infty)\|\gamma_\infty(a)-(-\sigma_\infty(b))\|_H\le \LIP(f_\infty)\le \Lip_af(x),
\end{align*}
and similarly for $G$. Together \eqref{eq:est-with-op-norm} and \eqref{eq:est-of-op-norm} yield
\[
(1-\varepsilon)^2(|D(f+g)|(x)^2+|D(f-g)|(x)^2)\le 2\Lip_af(x)^2+2\Lip_ag(x)^2.
\]
Since $\varepsilon$ can be taken arbitrarily small, the claim follows. 
\end{proof}

\begin{remark}\label{rmk:no-linear}{\rm
By \cite[Theorem 1.1]{Lytchak-Teri-Thomas}, any tangent $(Y,o)\in \operatorname{Tan}(X,x)$ can be written as a product $Y=Z\times H_{max}$, $o=(z_0,0)$, where $H_{max}$ is the maximal Hilbert space factor and $Z$ does not admit any splitting lines (through $z_0$). One may ask whether, given a blow-up $\psi:Y\to \R^k$ of a Lipschitz map $\varphi:X\to \R^k$ at $x$, the restriction $\psi(z_0,\cdot):H_{max}\to \R^k$ is linear. This need not be the case, cf.\ \cite{marchese-schioppa}.
\fr}\end{remark}

\begin{proof}[Proof of Theorem \ref{thm:inf-split-implies-inf-hilb}]
Let $\mu$ be a boundedly finite Radon measure on $X$. By Proposition \ref{prop:pre-parall-rule} we have
\begin{align}\label{eq:pre-parall-rule}
\int|D(f+g)|^2\ud\mu+\int|D(f-g)|^2\ud\mu\le 2\int(\Lip_af)^2\ud\mu+2\int(\Lip_ag)^2\ud\mu
\end{align}
for all $f,g\in\LIP_b(X)$. For $f,g\in W^{1,2}(X,\mu)$, using the energy density of Lipschitz functions \cite{Amb:Gig:Sav:13} (see also \cite{EB:20:published}), we obtain sequences $(f_j),(g_j)\subset\LIP_b(X)$ with $f_j\to f$, $g_j\to g$ and $\Lip_af_j\to |Df|$, $\Lip_ag_j\to |Dg|$ in $L^2(\mu)$. By \eqref{eq:pre-parall-rule} and the lower semicontinuity of the Cheeger energy we obtain 
\begin{align*}
    \int |D(f+g)|^2\,\d\mu+\int|D(f-g)|^2\,\d\mu\le &\liminf_{j\to\infty}\left[\int|D(f_j+g_j)|^2\,\d\mu+\int|D(f_j-g_j)|^2\,\d\mu\right]\\
    \le &2\liminf_{j\to\infty}\left[\int(\Lip_af_j)^2\,\d\mu+\int(\Lip_ag_j)^2\,\d\mu\right] \\
    = &2\int|Df|^2\,\d\mu+2\int|Dg|^2\,\d\mu.
\end{align*}
This readily implies the parallelogram identity, showing that $W^{1,2}(X,\mu)$ is a Hilbert space.
\end{proof}

\begin{proof}[Proof of Corollary \ref{cor:rcd-alex-inf-hilb}]
    If $X=(X,\mathfrak m)$ is an ${\rm RCD}(K,N)$-space for some $K\in\R,\ N\in [1,\infty)$, then for any $x\in X$, every $(Y,o)\in \operatorname{Tan}_{GH}(X,x)$ is an RCD($0,N)$-space by the stability of the RCD($K,N$)-condition under pmGH-convergence and the scaling properties of the RCD($K,N$)-condition under scaling of the metric (see e.g. \cite[Section 2, p. 471]{Brue:Pas:Sem:21}). Thus Gigli's splitting theorem \cite{Gig:14,Gig:13} implies that every $(Y,o)\in \operatorname{Tan}_{GH}(X,x)$ has the splitting property, i.e.\ $X$ is infinitesimally splitting (for Gromov--Hausdorff tangents). Moreover $\operatorname{Tan}_{GH}(X,x)\ne \varnothing$ for all $x$, since $X$ is locally doubling. By Theorem \ref{thm:inf-split-implies-inf-hilb}, $X$ is universally infinitesimally Hilbertian. 
    
    If $X$ is an Alexandrov space and $x\in X$, any ultratangent $(Y,o)\in \operatorname{Tan}(X,x)$ is a non-negatively curved Alexandrov space (see \cite[Theorem 6.17, Theorem 13.1 and Observation 4.9]{Alex:Kap:Pet:24}). Thus by \cite[Theorem 16.22]{Alex:Kap:Pet:24} $Y$ has the splitting property and, by Theorem \ref{thm:inf-split-implies-inf-hilb}, $X$ is universally infinitesimally Hilbertian.
\end{proof}

\appendix

\section{Universal infinitesimal Hilbertianity and property (ET)}
Recall that a metric space $X$ has property (ET) if, for any $u\in W^{1,2}(\mathbb D,X)$, the approximate metric derivative $\md_z u$ of $u$ is a (possibly degenerate) inner product norm for a.e.\ $z\in \mathbb D$, cf.\ \cite[Definition 11.1]{Lyt:Wen:17}. We refer to \cite[Section 1.2]{Lyt:Wen:17} for the definition of $W^{1,2}(\mathbb D,X)$ and the approximate metric derivative.

\begin{theorem}\label{thm:UIH-implies-ET}
    Suppose $X$ is complete and universally infinitesimally Hilbertian. Then $X$ has property (ET).
\end{theorem}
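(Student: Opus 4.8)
The plan is to reduce property (ET) to a pointwise, infinitesimal statement about Sobolev maps from the disc, and then to feed in universal infinitesimal Hilbertianity through the pullback measure. Given $u\in W^{1,2}(\mathbb D,X)$, the approximate metric derivative $\md_zu$ is, for a.e.\ $z\in\mathbb D$, a seminorm on $\R^2$; the goal is to show it satisfies the parallelogram law. First I would recall (from \cite{Lyt:Wen:17}, Section~1.2, and the structure theory of metric-space-valued Sobolev maps, e.g.\ via Lipschitz approximation) that it suffices to prove the claim for $u$ Lipschitz, and in fact, by Kirszbraun-type extension and restriction, it is enough to treat the model situation where $u\colon K\to X$ is a Lipschitz map defined on a measurable subset $K\subset\R^2$ of positive Lebesgue measure, at a point $z$ of approximate differentiability. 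The object $\md_zu$ is then recovered as a blow-up of $u$ at $z$.

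The core of the argument is to transfer the question onto $X$ itself. Push forward $\mathcal L^2|_K$ (or rather its restriction to a suitable bounded piece) under $u$ to obtain a boundedly finite Radon measure $\mu\coloneqq u_\#(\mathcal L^2|_{K'})$ on $X$, where $K'$ is a bounded set of positive measure near $z$; by hypothesis $(X,\mu)$ is infinitesimally Hilbertian, so by Corollary~\ref{cor:equiv_cond_IH} the Lipschitz tangent module $L^2_\Lip(TX)$ is Hilbert, equivalently \eqref{eq:parall_|b|} holds, equivalently (Theorem~\ref{thm:suff_condit_IH}) the parallelogram law holds for minimal relaxed slopes: for all $h_1,h_2\in\LIP_{bs}(X)$,
\[
\int|D(h_1+h_2)|^2\,\d\mu+\int|D(h_1-h_2)|^2\,\d\mu=2\int|Dh_1|^2\,\d\mu+2\int|Dh_2|^2\,\d\mu.
\]
The plan is then to apply this with $h_i=\ell_i$ running over (compactly supported modifications of) linear functionals on $X$ — more precisely, since $X$ is abstract, over distance-type functions $\dist_y$, which by \eqref{eq:dist_y_norming} are norming — and to relate $\int|D(\ell_1\pm\ell_2)|^2\,\d\mu$ to the seminorm $\md_zu$ via the change of variables for the pullback measure: composing with $u$ turns a function $h$ on $X$ into $h\circ u$ on $K'$, whose pointwise (Lipschitz/Sobolev) slope at $z$ is controlled by $\|\d_z(h\circ u)\|_{(\md_zu)^*}$ in the appropriate dual sense. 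Running this at the (common, a.e.) approximate differentiability point and using the density of such test functions to recover the full dual norm of $\md_zu$, the parallelogram identity on $X$ propagates to a parallelogram identity for the dual seminorm $(\md_zu)^*$ on $(\R^2)^*$, hence for $\md_zu$ itself.

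The main obstacle I expect is the change-of-variables bookkeeping: relating the \emph{intrinsic} minimal relaxed slope $|Dh|$ of $h$ on $(X,\mu)$ to the \emph{pointwise} behaviour of $h\circ u$ on the Euclidean source, because $\mu=u_\#(\mathcal L^2|_{K'})$ can be very singular (lower dimensional, with multiplicity), so a naive chain rule fails and one must argue through test plans / curve families — e.g.\ pushing forward a family of segments in $K'$ under $u$ to a test plan on $X$, in the spirit of \eqref{eq:PS_1}--\eqref{eq:PS_2} — to get the two-sided comparison $|Dh|\circ u\cdot\md_zu\gtrsim|\d_z(h\circ u)|\gtrsim$ (something matching from below). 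A second, more technical point is the reduction from general $W^{1,2}(\mathbb D,X)$ to the Lipschitz-on-a-measurable-set model and the verification that the blow-up procedure defining $\md_zu$ is compatible with the blow-up of $\mu$ (one wants the tangent of $\mu$ at a typical point to be, up to a density, the push-forward of $\mathcal L^2$ under the linear map $\md_zu$), so that the Euclidean parallelogram law one extracts is genuinely that of $\md_zu$ and not of some averaged object. Once these identifications are in place, the conclusion is immediate since a seminorm on $\R^2$ satisfying the parallelogram law is an inner product seminorm, which is exactly property (ET).
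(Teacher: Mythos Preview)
Your overall architecture --- push forward to a measure $\mu$ on $X$, invoke UIH on $(X,\mu)$, and transfer the parallelogram law back to $\md_z u$ --- matches the paper's. But the step you yourself flag as the main obstacle is where your plan has a genuine gap. Testing against distance functions $\dist_y$ will not recover $(\md_z u)^*$: the quantity $|D\dist_y|$ on $(X,\mu)$ reflects the intrinsic geometry of $X$ near $u(z)$, not the parametrisation by $u$, and there is no mechanism by which a supremum of such quantities rebuilds the dual seminorm of $\md_z u$. What you need are test functions on $X$ whose composition with $u$ is \emph{linear} on $\R^2$, and the only natural source of those is the inverse map. The paper first reduces (by contradiction) to a compact set $K$ on which $u$ is bi-Lipschitz and $\md_z u$ is a genuine non-Euclidean norm, extends $\varphi\coloneqq u^{-1}|_{u(K)}$ to a Lipschitz map $X\to\R^2$, and then shows --- via a modulus comparison (Lemma~\ref{lem:mod-est}) that pushes the straight-line families $\Gamma_v=\{\gamma\subset B:\gamma'=v\}$ forward to curves in $(X,\mu)$ --- that the canonical minimal gradient $\Phi^x(\xi)$ of $\xi\circ\varphi$ (the pointwise seminorm-valued object from \cite{EB-sou24}) equals exactly $s_{\varphi(x)}^*(\xi)$ for $\mu$-a.e.\ $x$ on a set of positive measure. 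Infinitesimal Hilbertianity of $(X,\mu)$ then forces $\Phi^x$, hence $s_z^*$, hence $s_z$, to be an inner product seminorm (Lemma~\ref{lem:IH-inner prod}), giving the contradiction.

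Two further technical points your sketch misses. First, the pushforward should be weighted, $\mu=u_*(|Du|^2\mathcal L^2|_B)$, not $u_*(\mathcal L^2|_{K'})$; the weight is exactly what makes the modulus inequality $\operatorname{Mod}_2(\Gamma)\le\operatorname{Mod}_2(u(\Gamma);\mu)$ hold for arbitrary curve families, and this inequality is the engine that transports the segment families (your ``push forward a family of segments'' intuition) into nontrivial families on $X$. Second, the integrated parallelogram identity you write is too weak: to conclude that $s_z$ is inner product at a \emph{fixed} $z$ you need the pointwise identity $|D((\xi+\zeta)\circ\varphi)|^2+|D((\xi-\zeta)\circ\varphi)|^2=2|D(\xi\circ\varphi)|^2+2|D(\zeta\circ\varphi)|^2$ $\mu$-a.e., simultaneously for all $\xi,\zeta$. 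This is precisely what the canonical-minimal-gradient formalism packages, and is not addressed by your blow-up-of-$\mu$ heuristic.
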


We record two elementary lemmas needed in the proof of Theorem \ref{thm:UIH-implies-ET}.
\begin{lemma}\label{lem:mod-est}
Suppose $B\subset \R^2$ is a ball, $u\in W^{1,2}(B,X)$ and $\mu:=u_\ast(|Du|^2\mathcal L^2|_B)$. Then
\[
\operatorname{Mod}_2(\Gamma)\le \operatorname{Mod}_2(u(\Gamma);\mu)
\]
for all curve families $\Gamma$ in $B$. 
\end{lemma}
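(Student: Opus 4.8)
The plan is to unwind the definitions of the two moduli and exhibit, for every admissible density on the target side, a pullback density on $B$ that is admissible for $\Gamma$ and has no larger $2$-energy. Concretely, let $\rho\in L^2(\mu)^+$ be admissible for the family $u(\Gamma)$, meaning $\int_\sigma\rho\,\d s\ge 1$ for every (rectifiable, locally rectifiable) curve $\sigma$ in $u(\Gamma)$. The natural candidate is $\tilde\rho(z):=\rho(u(z))\,|Du|(z)$, where $|Du|$ is the (minimal) upper gradient of the Sobolev map $u\in W^{1,2}(B,X)$ in the sense of Korevaar--Schoen/Reshetnyak; I would use the formulation via the approximate metric derivative, so that for a.e.\ curve $\gamma$ in $B$ the composition $u\circ\gamma$ is rectifiable with $\operatorname{length}(u\circ\gamma)\le\int_\gamma|Du|\,\d s$, and in fact the metric speed of $u\circ\gamma$ at $t$ is bounded by $\md_{\gamma_t}u(\dot\gamma_t)\le|Du|(\gamma_t)|\dot\gamma_t|$.

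The first step is to make precise the sense in which $u$ maps $2$-a.e.\ curve in $B$ to a rectifiable curve in $X$, and that the image under $u$ of a $\operatorname{Mod}_2$-exceptional family in $B$ is $\operatorname{Mod}_2(\cdot;\mu)$-exceptional in $X$; this is where the choice $\mu=u_\ast(|Du|^2\mathcal L^2|_B)$ and the change-of-variables inequality $\int_X h\,\d\mu\le\int_B (h\circ u)\,|Du|^2\,\d\mathcal L^2$ for nonnegative Borel $h$ enter. The second step is the curvewise estimate: for $2$-a.e.\ $\gamma\in\Gamma$, using the length/upper-gradient bound,
\[
\int_\gamma\tilde\rho\,\d s=\int_\gamma(\rho\circ u)\,|Du|\,\d s\ge\int_{u\circ\gamma}\rho\,\d s\ge 1,
\]
the middle inequality being the one-dimensional area/coarea comparison for the Lipschitz-after-reparametrisation curve $u\circ\gamma$ (length in the target dominates the line integral pulled back from the domain). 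Hence $\tilde\rho$ is admissible for $\Gamma$ up to a $2$-exceptional family, which does not affect $\operatorname{Mod}_2(\Gamma)$. The third step is the energy comparison: by the change of variables above,
\[
\int_B\tilde\rho^2\,\d\mathcal L^2=\int_B(\rho^2\circ u)\,|Du|^2\,\d\mathcal L^2\ge\int_X\rho^2\,\d\mu,
\]
wait---I need the inequality the other way, so I would instead invoke the identity (not merely inequality) $\int_B (h\circ u)|Du|^2\,\d\mathcal L^2=\int_X h\,\d\mu$, which holds by the very definition of the pushforward $\mu=u_\ast(|Du|^2\mathcal L^2|_B)$. This gives $\int_B\tilde\rho^2\,\d\mathcal L^2=\int_X\rho^2\,\d\mu$, so $\operatorname{Mod}_2(\Gamma)\le\int_B\tilde\rho^2\,\d\mathcal L^2=\int_X\rho^2\,\d\mu$, and taking the infimum over admissible $\rho$ yields the claim.

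The main obstacle is the rectifiability-and-reparametrisation bookkeeping in the second step: for a generic $\gamma\in\Gamma$ the composition $u\circ\gamma$ need only be defined up to a modulus-exceptional family and need not be Lipschitz, so the line integral $\int_{u\circ\gamma}\rho\,\d s$ has to be interpreted via the arc-length parametrisation of $u\circ\gamma$, and one must ensure $\rho\circ u\circ\gamma$ is Borel along $2$-a.e.\ curve. I would handle this by the standard Fuglede-type argument: replace $\rho$ by a Borel representative, use that $\{|Du|=\infty\}$ and the set where $u$ fails to be absolutely continuous along curves are $2$-null for curves, and pass to an exceptional family once and for all. With that in place the three displayed inequalities are routine, and the lemma follows.
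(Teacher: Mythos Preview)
Your argument is correct and matches the paper's proof essentially line for line: pull back an admissible $\rho$ to $\tilde\rho=(\rho\circ u)|Du|$, use the upper-gradient inequality $|(u\circ\gamma)'_t|\le |Du|(\gamma_t)|\dot\gamma_t|$ along $\operatorname{Mod}_2$-a.e.\ curve to verify admissibility of $\tilde\rho$ for $\Gamma$ modulo an exceptional family, and then invoke the pushforward identity $\int_B(\rho^2\circ u)|Du|^2\,\d\mathcal L^2=\int_X\rho^2\,\d\mu$ before infimising. The only difference is cosmetic: the paper writes the curvewise chain as $1\le\int_0^1\rho(u\circ\gamma(t))|(u\circ\gamma)'_t|\,\d t\le\int_0^1(\rho\circ u)(\gamma_t)|Du|(\gamma_t)|\dot\gamma_t|\,\d t$ and is more terse about the Fuglede-type bookkeeping you spell out.
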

Here $u(\Gamma)$ denotes the collection of rectifiable curves $u\circ\gamma$ with $\gamma\in \Gamma$.
\begin{proof}
Let $\rho$ be admissible for $u(\Gamma)$ and suppose $|(u\circ\gamma)_t'|\le |Du|(\gamma_t)|\gamma_t'|$ a.e.\ $t$ for all $\gamma\notin \Gamma_0$, where $\operatorname{Mod}_2(\Gamma_0)=0$. Then
\begin{align*}
1\le \int_{0}^1\rho(u\circ\gamma(t))|(u\circ\gamma)_t'|\ud t\le \int_0^1\rho\circ u(\gamma_t)|Du|(\gamma_t)|\gamma_t'|\ud t
\end{align*}
for all $\gamma\in \Gamma\setminus\Gamma_0$, so that $\rho\circ u|Du|$ is admissible for $\Gamma\setminus\Gamma_0$. Thus
\[
\operatorname{Mod}_2(\Gamma)=\operatorname{Mod}_2(\Gamma\setminus\Gamma_0)\le \int_B\rho^2\circ u|Du|^2\ud z=\int_X\rho^2\ud\mu.
\]
Taking infimum over admissible $\rho$ yields the claim.
\end{proof}

For the terminology used in the statement of the second lemma we refer the reader to \cite[Section 4B]{EB-sou24} for the \emph{canonical minimal gradient} $\Phi:(\R^n)^*\times X\to [0,\infty]$ of $\varphi\in W^{1,2}((X,\mu);\R^n)$. We recall here that $\Phi$ is a Borel function such that (i) $\Phi_\xi:=\Phi(\xi,\cdot)$ is a representative of the minimal 2-weak upper gradient $|D(\xi\circ \varphi)|$ for each $\xi\in (\R^n)^*$, and (ii) $\Phi^x:=\Phi(\cdot,x)$ is a seminorm on $(\R^n)^*$ for $\mu$-a.e.\ $x$. 

\begin{lemma}\label{lem:IH-inner prod}
Suppose $(X,\mu)$ is infinitesimally Hilbertian. Then for any $\varphi\in W^{1,2}(X,\R^2)$, the canonical minimal gradient $\Phi$ associated to $\varphi$ has the following property: for $\mu$-a.e.\ $x\in X$, $\Phi^x$ is an inner product seminorm.
\end{lemma}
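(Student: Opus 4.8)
The plan is to reduce the inner-product property of the seminorm $\Phi^x$ to the parallelogram law for the minimal weak upper gradients of linear combinations of the components of $\varphi$, and then invoke infinitesimal Hilbertianity. Write $\varphi = (\varphi_1, \varphi_2)$ and identify $(\R^2)^*$ with $\R^2$ via the standard basis $e_1^*, e_2^*$, so that $\Phi^x$ is an inner product seminorm if and only if the parallelogram identity
\[
(\Phi^x_{\xi+\eta})^2 + (\Phi^x_{\xi-\eta})^2 = 2(\Phi^x_\xi)^2 + 2(\Phi^x_\eta)^2
\]
holds for all $\xi,\eta\in(\R^2)^*$, and by continuity (recall $\Phi^x$ is a seminorm for $\mu$-a.e.\ $x$, hence determined by its values on a countable dense set) it suffices to verify this for all $\xi,\eta$ in a fixed countable dense subset of $(\R^2)^*$. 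So I would fix such a countable family and, for each pair $(\xi,\eta)$ in it, work $\mu$-a.e.\ to establish the pointwise identity.

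Next, for fixed $\xi,\eta\in(\R^2)^*$ I would use property (i) of the canonical minimal gradient: $\Phi_\zeta$ is a representative of $|D(\zeta\circ\varphi)|$ for every linear functional $\zeta$. In particular $\Phi_{\xi+\eta}$ represents $|D((\xi+\eta)\circ\varphi)| = |D(\xi\circ\varphi + \eta\circ\varphi)|$ and similarly for $\xi-\eta$, $\xi$, $\eta$. The functions $u := \xi\circ\varphi$ and $v := \eta\circ\varphi$ lie in $W^{1,2}(X,\mu)$ (being Lipschitz-in-$\R^n$ composed post-composition with $\varphi\in W^{1,2}$, or more carefully: linear functionals are Lipschitz so $\zeta\circ\varphi\in W^{1,2}$ with the stated weak upper gradient). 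Infinitesimal Hilbertianity of $(X,\mu)$ says precisely that $W^{1,2}(X,\mu)$ is a Hilbert space, which is equivalent to the parallelogram law for the $W^{1,2}$-norm; combined with the $L^2$-parallelogram law for the first term, this gives
\[
\int_X |D(u+v)|^2\,\d\mu + \int_X |D(u-v)|^2\,\d\mu = 2\int_X |Du|^2\,\d\mu + 2\int_X |Dv|^2\,\d\mu .
\]
The standard refinement of this (using that one always has the pointwise sub-parallelogram inequality $|D(u+v)|^2 + |D(u-v)|^2 \le 2|Df|^2\cdots$ — more precisely, by a localization/truncation argument testing against $\1_E$ for Borel $E$ and using the pointwise inequalities $|D(u+v)|\le|Du|+|Dv|$ etc.\ one deduces) is that the parallelogram identity in fact holds $\mu$-a.e.\ pointwise:
\[
|D(u+v)|^2 + |D(u-v)|^2 = 2|Du|^2 + 2|Dv|^2 \quad \mu\text{-a.e.}
\]
Rewriting this in terms of $\Phi$ and choosing $\mu$-null exceptional sets for each of the countably many pairs $(\xi,\eta)$, I take the union of these null sets; off it, $\Phi^x$ satisfies the parallelogram law on the countable dense set, hence — being a seminorm — everywhere, so $\Phi^x$ is an inner product seminorm for $\mu$-a.e.\ $x$.

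The main obstacle I anticipate is the passage from the \emph{integrated} parallelogram identity (which is literally what infinitesimal Hilbertianity gives) to the \emph{pointwise $\mu$-a.e.} parallelogram identity for the minimal weak upper gradients. This is the one genuinely non-formal step: it requires the locality of minimal weak upper gradients together with the fact that the sub-parallelogram inequality $|D(u+v)|^2 + |D(u-v)|^2 \le 2|Du|^2 + 2|Dv|^2$ holds pointwise $\mu$-a.e.\ (which in turn follows from sublinearity $|D(w_1+w_2)|\le |Dw_1|+|Dw_2|$ and the $\R^2$-parallelogram inequality, applied to $w_1 = u+v$, $w_2 = u-v$ and then to $w_1 = 2u$, $w_2 = 2v$); once one has the pointwise inequality together with equality after integration, the pointwise equality follows. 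This is well known in the theory (it is exactly how one shows the Cheeger energy is a quadratic form iff it satisfies the parallelogram identity pointwise, cf.\ Gigli--Ambrosio--Savar\'e), so I would cite it rather than reprove it, e.g.\ via \cite{Gig:15} or \cite{Amb:Gig:Sav:14}. Everything else — reducing to a countable dense set, the identification $(\R^n)^*\cong\R^n$, the observation that $\zeta\circ\varphi\in W^{1,2}$ with weak upper gradient $\Phi_\zeta$ — is routine bookkeeping.
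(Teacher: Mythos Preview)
Your proposal is correct and follows essentially the same route as the paper's proof: identify $\Phi_\zeta$ with $|D(\zeta\circ\varphi)|$, invoke the pointwise $\mu$-a.e.\ parallelogram identity $|D(u+v)|^2+|D(u-v)|^2=2|Du|^2+2|Dv|^2$ that characterises infinitesimal Hilbertianity, and pass to all $\xi,\eta$ via a countable dense set using that $\Phi^x$ is a seminorm. The paper is simply terser, taking the pointwise identity as known rather than discussing the integrated-to-pointwise passage.
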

\begin{proof}
For $\xi,\zeta\in (\R^2)^*$ and $\mu$-a.e.\ $x$ we have
\begin{align*}
\Phi^x(\xi+\zeta)^2+\Phi^x(\xi-\zeta)^2&=|D((\xi+\zeta)\circ \varphi)|^2(x)+|D((\xi-\zeta)\circ \varphi)|^2(x)\\
&=2(|D(\xi\circ\varphi)|^2(x)+|D(\zeta\circ\varphi)|^2(x))\\
&=2(\Phi^x(\xi)^2+\Phi^x(\zeta)^2).
\end{align*}
Applying this a.e.\ identity for a countable dense set of $\xi,\zeta$ and using the fact that $\Phi^x$ is a seminorm the claim follows.
\end{proof}

\begin{proof}[Proof of Theorem \ref{thm:UIH-implies-ET}]
Let $u\in W^{1,2}(\mathbb D,X)$ be such that $s_z:=\md_z u$ fails to be an inner product seminorm on a set of positive measure. If $s_z$ is degenerate for a.e.\ $z\in \mathbb D$, i.e.\ $\{0\}\ne \ker(s_z):=\{v\in \R^2:\ s_z(v)=0\}$ for a.e.\ $z\in \mathbb D$, then (as $\ker(s_z)$ is a vector space) $s_z$ is a norm on the quotient
\begin{align*}
\R^2/\ker(s_z)=
\left\{
\begin{array}{ll}
\R\\
\{0\}
\end{array}\quad\begin{array}{ll}
\textrm{if }\dim\ker(s_z)=1\\
\textrm{if }\dim\ker(s_z)=2.
\end{array}
\right.
\end{align*}
In either case $s_z$ is an inner product norm on the quotient, and thus $s_z$ is a degenerate inner product norm on $\R^2$. 

Thus we may assume that there is a compact set $K'\subset \mathbb D$ with $|K'|>0$ such that $\md_zu$ exists and is a non-degenerate non-inner product norm on $\R^2$ for every $z\in K'$. It follows that there exists a further compact set $K\subset K'$ with $|K|>0$ such that $u|_K$ is $L$-bi-Lipschitz for some $L>0$.

Let $z\in K$ be a density point of $K$ and $r_0>0$ such that $|K\cap B(z,r)|\ge \frac 12|B(z,r)|$, $r\le r_0$. Set $\mu:=u_\ast(|Du|^2\mathcal L^2|_{B(z,r_0)})$. Denote $K_0:=K\cap B(z,r_0)$ and write
\[
\mu=\rho\mathcal H^2|_{u(K_0)}+\nu,\quad \nu\perp \mathcal H^2|_{u(K_0)}.
\]
Since $u|_{K_0}$ is $L$-bi-Lipschitz we have $\rho\ge L^{-4}$ on $u(K_0)$.

Let $U\subset u(K_0)$ be a compact set with $\mathcal H^2(U)>0$ and $\nu(U)=0$. (Note in particular that $\mu|_U$ and $\mathcal H^2|_U$ are mutually absolutely continuous.)
Let $\varphi\in\LIP(X,\R^2)$ be a $\sqrt 2L$-Lipschitz extension of $u\inv:u(K_0)\to K_0\subset \mathbb D\subset \R^2$. We claim that $(U,\varphi)$ is 2-independent in the sense of \cite[Section 1C]{EB-sou24} in the metric measure space $(X,\mu)$. We denote by $\Phi(\xi,x)$ the \emph{canonical minimal gradient} associated to $\varphi$, see \cite[Section 4B]{EB-sou24}.

Firstly, for $\operatorname{Mod}_2$-a.e.\ curve $\gamma$ in $(X,\mu)$ we have that $\md_{\varphi(\gamma_t)}u$ exists and equals $\md_{\varphi(\gamma_t)}(u|_{K_0})$ for a.e.\ $t\in \gamma\inv(U)$. Indeed, for any null-set $N\subset  \mathbb D$, the curve family $$\Gamma_1:=\{ \gamma:\ |\gamma\inv(U)\cap(\varphi\circ\gamma)\inv(N)|>0\}$$ has zero 2-modulus in $(X,\mu)$: if $\gamma\in \Gamma_1$ is constant-speed parametrized (reparametrizing the curves in a family does not affect its 2-modulus) then 
\[
\int_\gamma\chi_{U\cap \varphi\inv(N)}\ud s=\ell(\gamma)|\gamma\inv(U\cap \varphi\inv(N))|=\ell(\gamma)|\gamma\inv(U)\cap (\varphi\circ\gamma)\inv(N)|>0,
\]
thus $\gamma\in \Gamma^+_{U\cap \varphi\inv(N)}$; but $U\cap \varphi\inv(N)=(\varphi|_U)\inv(N\cap \varphi(U))=u(N\cap \varphi(U))$ is $\mu$-null\footnote{Note that $\varphi(U)\subset K_0$, $u|_{K_0}$ is $L$-Lipschitz, and that $\mu|_U$ and $\mathcal H^2|_U$ are mutually absolutely continuous.}, implying $\operatorname{Mod}_2(\Gamma_1;\mu)=0$.

For any (absolutely continuous) curve $\gamma$ in $X$ the identities $|\gamma_t'|=|(\gamma|_{\gamma\inv(U)})'_t|$ and $(\varphi\circ\gamma)_t'=(\varphi\circ\gamma|_{\gamma\inv(U)})_t'$ hold for a.e.\ $t\in \gamma\inv(U)$. Moreover $\gamma|_{\gamma\inv(U)}=u\circ\varphi\circ\gamma|_{\gamma\inv(U)}$, since $\varphi|_U=u\inv|_U$. Thus, for $\gamma\notin\Gamma_1$ (i.e.\ for $\operatorname{Mod}_2$-a.e.\ $\gamma$ in $(X,\mu)$), we have that 
\begin{align*}
|\gamma_t'|=|(u\circ\varphi\circ\gamma|_{\gamma\inv(U)})_t'|=\md_{\varphi(\gamma_t)}(u|_{K_0})((\varphi\circ\gamma|_{\gamma\inv(U)})_t')=\md_{\varphi(\gamma_t)}u((\varphi\circ\gamma)_t')
\end{align*}
for a.e.\ $t\in \gamma\inv(U)$.

Given $v\in S^1$, set $\Gamma_v=\{ \gamma\subset B(z,r_0):\ \gamma'=v\}$, and note that
\begin{align*}
0<\operatorname{Mod}_2(\Gamma_v)\le \operatorname{Mod}_2(u(\Gamma_v);\mu)
\end{align*}
by Lemma \ref{lem:mod-est}. For $\operatorname{Mod}_2$-a.e.\ $\gamma\in u(\Gamma_v)$ we have $(\varphi\circ\gamma)'_t=v$ for a.e.\ $t\in \gamma\inv(U)$, and consequently for any $\xi\in (\R^2)^*$, 
\begin{align*}
\Phi(\xi,\gamma_t)\ge \frac{\xi((\varphi\circ\gamma)_t')}{|\gamma_t'|}=\frac{\xi(v)}{s_{\varphi(\gamma_t)}(v)}\quad \textrm{for a.e.\ }t\in \gamma\inv(U).
\end{align*}
Taking supremum over a countable dense set $D\subset S^1$ and arguing as in the proof of \cite[Lemma 4.2(2)]{EB-sou24} we obtain that
\begin{align}\label{eq:dual-norm-lower-bound}
\Phi(\xi,x)\ge s_{\varphi(x)}^*(\xi):=\sup\left\{\frac{\xi(v)}{s_{\varphi(x)}(v)}:\ v\in S^1\right\}=s^*_{\varphi(x)}(\xi), \quad \xi\in (\R^2)^*,
\end{align}
for $\mu$-a.e.\ $x\in U$. In particular, since $s_z\simeq_L|\cdot|_{Eucl}$, this shows that $(U,\varphi)$ is 2-independent. On the other hand 
\begin{align}\label{eq:dual-norm-upper-bound}
\frac{|\xi((\varphi\circ\gamma)_t')|}{|\gamma_t'|}=\xi\left(\frac{(\varphi\circ\gamma)_t'}{s_{\varphi(\gamma_t)}((\varphi\circ\gamma)_t')}\right)\le s_{\varphi(\gamma_t)}^*(\xi)\quad\textrm{for a.e.\ }t\in \gamma\inv(U)
\end{align}
for $\operatorname{Mod}_2$-a.e.\ $\gamma$. Together \eqref{eq:dual-norm-lower-bound} and \eqref{eq:dual-norm-upper-bound} imply
\begin{align*}
s_{\varphi(x)}(v)=(\Phi^x)^*(v),\quad v\in\R^2
\end{align*}
for $\mu$-a.e.\ $x\in U$. It follows that $s_z=(\Phi^{u(z)})^*$ for a.e.\ $z\in \varphi(U)$. This is a contradiction since $\Phi^{u(z)}$ (and therefore its dual norm) is an inner product norm for a.e.\ $z\in \varphi(U)$ by Lemma \ref{lem:IH-inner prod}. 
\end{proof}
%
%
%
%

%
%
%\bibliographystyle{abbrv}
%\bibliography{biblio.bib}

\begin{thebibliography}{10}

\bibitem{Alex:Kap:Pet:24}
S.~Alexander, V.~Kapovitch, and A.~Petrunin.
\newblock {\em Alexandrov {G}eometry. {F}oundations}.
\newblock AMS, 2024.

\bibitem{Amb:Gig:Sav:13}
L.~Ambrosio, N.~Gigli, and G.~Savar\'{e}.
\newblock Density of {L}ipschitz functions and equivalence of weak gradients in metric measure spaces.
\newblock {\em Rev. Mat. Iberoam.}, 29(3):969--996, 2013.

\bibitem{Amb:Gig:Sav:14}
L.~Ambrosio, N.~Gigli, and G.~Savar\'{e}.
\newblock Calculus and heat flow in metric measure spaces and applications to spaces with {R}icci bounds from below.
\newblock {\em Invent. Math.}, 195(2):289--391, 2014.

\bibitem{Amb:Gig:Sav:14-2}
L.~Ambrosio, N.~Gigli, and G.~Savar{\'e}.
\newblock Metric measure spaces with {R}iemannian {R}icci curvature bounded from below.
\newblock {\em Duke Mathematical Journal}, 163(7):1405--1490, 2014.

\bibitem{Amb:Iko:Luc:Pas:24}
L.~Ambrosio, T.~Ikonen, D.~Lu\v{c}i\'{c}, and E.~Pasqualetto.
\newblock Metric {S}obolev spaces {I}: equivalence of definitions.
\newblock {\em Milan Journal of Mathematics}, 92:255--347, 2024.

\bibitem{Bj:Bj:11}
A.~Bj\"{o}rn and J.~Bj\"{o}rn.
\newblock {\em Nonlinear potential theory on metric spaces}, volume~17 of {\em EMS Tracts in Mathematics}.
\newblock European Mathematical Society (EMS), Z\"{u}rich, 2011.

\bibitem{Brue:Pas:Sem:21}
E.~Brue, E.~Pasqualetto, and D.~Semola.
\newblock Rectifiability of {R}{C}{D}$({K},{N})$ spaces via $\delta$-splitting maps.
\newblock {\em Annales Fennici Mathematici}, 46(1):465--482, 2021.

\bibitem{Bu:Bu:Iv:01}
D.~Burago, Y.~Burago, and S.~Ivanov.
\newblock {\em A course in metric geometry}, volume~33 of {\em Graduate Studies in Mathematics}.
\newblock American Mathematical Society, Providence, RI, 2001.

\bibitem{Ch:99}
J.~Cheeger.
\newblock Differentiability of {L}ipschitz functions on metric measure spaces.
\newblock {\em Geom. Funct. Anal.}, 9(3):428--517, 1999.

\bibitem{guy-c15}
G.~C. David.
\newblock Tangents and rectifiability of {A}hlfors regular {L}ipschitz differentiability spaces.
\newblock {\em Geom. Funct. Anal.}, 25(2):553--579, 2015.

\bibitem{DS:So:25}
L.~{Dello Schiavo} and G.~E. Sodini.
\newblock The {H}ellinger-{K}antorovich metric measure geometry on spaces of measures.
\newblock Preprint, arXiv:2503.07802, 2025.

\bibitem{DiMaPhD:14}
S.~Di~Marino.
\newblock Recent advances on {BV} and {S}obolev spaces in metric measure spaces, 2014.
\newblock PhD thesis (cvgmt preprint).

\bibitem{DiMar:14}
S.~Di~Marino.
\newblock Sobolev and {BV} spaces on metric measure spaces via derivations and integration by parts.
\newblock {\em arXiv preprint, arXiv:1409.5620}, 2014.

\bibitem{DiMar:Gig:Pas:Sou:20}
S.~Di~Marino, N.~Gigli, E.~Pasqualetto, and E.~Soultanis.
\newblock {Infinitesimal Hilbertianity of Locally {CAT}($\kappa$)-Spaces}.
\newblock {\em J. Geom. Anal.}, 31:7621--7685, 2020.

\bibitem{DiMa:Gig:Pra:20}
S.~Di~Marino, N.~Gigli, and A.~Pratelli.
\newblock Global {L}ipschitz extension preserving local constants.
\newblock {\em Atti Accad. Naz. Lincei Rend. Lincei Mat. Appl.}, 31(4):757--765, 2020.

\bibitem{DiMa:Lu:Pa:20}
S.~Di~Marino, D.~Lu\v{c}i\'{c}, and E.~Pasqualetto.
\newblock A short proof of the infinitesimal {H}ilbertianity of the weighted {E}uclidean space.
\newblock {\em C. R. Math.}, 358(7):817--825, 2020.

\bibitem{EB:20:published}
S.~Eriksson-Bique.
\newblock Density of {L}ipschitz functions in energy.
\newblock {\em Calc. Var. Partial Differential Equations}, 62(2):60, 2023.

\bibitem{EB-raj-sou24}
S.~Eriksson-Bique, T.~Rajala, and E.~Soultanis.
\newblock Tensorization of quasi-{H}ilbertian {S}obolev spaces.
\newblock {\em Rev. Mat. Iberoam.}, 40(2):565--580, 2024.

\bibitem{EB-sou24}
S.~Eriksson-Bique and E.~Soultanis.
\newblock Curvewise characterizations of minimal upper gradients and the construction of a {S}obolev differential.
\newblock {\em Anal. PDE}, 17(2):455--498, 2024.

\bibitem{Lytchak-Teri-Thomas}
T.~Foertsch, A.~Lytchak, and E.~Soultanis.
\newblock Hilbert space factor of metric spaces.
\newblock arXiv:2503.00864, 2025.

\bibitem{Fo:He:So:2025}
M.~Fornasier, P.~Heid, and G.~E. Sodini.
\newblock Approximation theory, computing, and deep learning on the {W}asserstein space.
\newblock {\em Mathematical Models and Methods in Applied Sciences}, 35(04):825--903, 2025.

\bibitem{Fo:Sa:So:23}
M.~Fornasier, G.~Savar\'{e}, and G.~E. Sodini.
\newblock Density of subalgebras of {L}ipschitz functions in metric {S}obolev spaces and applications to {W}asserstein {S}obolev spaces.
\newblock {\em J. Funct. Anal.}, 285(11):110153, 2023.

\bibitem{Fre:12}
D.~H. Fremlin.
\newblock {\em Measure {T}heory: {B}road {F}oundations. {V}olume 2}.
\newblock Measure Theory. Torres Fremlin, 2012.

\bibitem{Gig:11}
N.~Gigli.
\newblock On the inverse implication of {B}renier-{M}c{C}ann theorems and the structure of $(\mathscr{P}_2({M}),{W}_2)$.
\newblock {\em Methods and Applications of Analysis}, 18(2):127--158, 2011.

\bibitem{Gig:13}
N.~Gigli.
\newblock The splitting theorem in non-smooth context.
\newblock Preprint, arXiv:1302.5555, 2013.

\bibitem{Gig:14}
N.~Gigli.
\newblock An overview of the proof of the splitting theorem in spaces with non-negative {R}icci curvature.
\newblock {\em Anal. Geom. Metr. Spaces}, 2(1):169--213, 2014.

\bibitem{Gig:15}
N.~Gigli.
\newblock On the differential structure of metric measure spaces and applications.
\newblock {\em Mem. Amer. Math. Soc.}, 236(1113), 2015.

\bibitem{Gig:17}
N.~Gigli.
\newblock {L}ecture notes on differential calculus on $\sf {R}{C}{D}$ spaces.
\newblock {\em Publ. RIMS Kyoto Univ.}, 54, 2018.

\bibitem{Gig:18}
N.~Gigli.
\newblock Nonsmooth differential geometry---an approach tailored for spaces with {R}icci curvature bounded from below.
\newblock {\em Mem. Amer. Math. Soc.}, 251(1196), 2018.

\bibitem{Gig:Oh:12}
N.~Gigli and S.-I. Ohta.
\newblock First variation formula in {W}asserstein spaces over compact {A}lexandrov spaces.
\newblock {\em Canadian Mathematical Bulletin}, 55(4):723--735, 2012.

\bibitem{Gig:Pas:19}
N.~Gigli and E.~Pasqualetto.
\newblock Differential structure associated to axiomatic {S}obolev spaces.
\newblock {\em Expositiones Mathematicae}, 38(4):480--495, 2020.

\bibitem{Gig:Pas:21}
N.~Gigli and E.~Pasqualetto.
\newblock Behaviour of the reference measure on $\sf{R}{C}{D}$ spaces under charts.
\newblock {\em Commun. Anal. Geom.}, 29(6):1391--1414, 2021.

\bibitem{Halb00}
S.~Halbeisen.
\newblock On tangent cones of {A}lexandrov spaces with curvature bounded below.
\newblock {\em Manuscripta Math.}, 103(2):169--182, 2000.

\bibitem{HKST:15}
J.~Heinonen, P.~Koskela, N.~Shanmugalingam, and J.~T. Tyson.
\newblock {\em Sobolev spaces on metric measure spaces. An approach based on upper gradients}, volume~27 of {\em New Mathematical Monographs}.
\newblock Cambridge University Press, Cambridge, 2015.

\bibitem{Kir:94}
B.~Kirchheim.
\newblock Rectifiable metric spaces: Local structure and regularity of the {H}ausdorff measure.
\newblock {\em Proc. of the AMS}, 121(1): 113--123, 1994.

\bibitem{LeDo:Lu:Pa:23}
E.~Le~Donne, D.~Lu\v{c}i\'{c}, and E.~Pasqualetto.
\newblock Universal infinitesimal {H}ilbertianity of sub-{R}iemannian manifolds.
\newblock {\em Potential Anal.}, 59(1):349--374, 2023.

\bibitem{Lu:Pa:20}
D.~Lu\v{c}i\'{c} and E.~Pasqualetto.
\newblock Infinitesimal {H}ilbertianity of weighted {R}iemannian manifolds.
\newblock {\em Canad. Math. Bull.}, 63(1):118--140, 2020.

\bibitem{Lyt:Wen:17}
A.~Lytchak and S.~Wenger.
\newblock Area minimizing discs in metric spaces.
\newblock {\em Arch. Ration. Mech. Anal.}, 223(3):1123--1182, 2017.

\bibitem{marchese-schioppa}
A.~Marchese and A.~Schioppa.
\newblock Lipschitz functions with prescribed blowups at many points.
\newblock {\em Calc. Var. Partial Differential Equations}, 58(3):112, 2019.

\bibitem{Oh:09}
S.-I. Ohta.
\newblock Gradient flows on {W}asserstein spaces over compact {A}lexandrov spaces.
\newblock {\em Amer. J. Math.}, 131:475--516, 2009.

\bibitem{Pao:Ste:12}
E.~Paolini and E.~Stepanov.
\newblock Decomposition of acyclic normal currents in a metric space.
\newblock {\em J. Funct. Anal.}, 263(11):3358--3390, 2012.

\bibitem{Pao:Ste:13}
E.~Paolini and E.~Stepanov.
\newblock Structure of metric cycles and normal one-dimensional currents.
\newblock {\em J. Funct. Anal.}, 264(6):1269--1295, 2013.

\bibitem{Pas:Tai:25}
E.~Pasqualetto and J.~Taipalus.
\newblock Derivations and {S}obolev functions on extended metric-measure spaces.
\newblock Preprint, arXiv:2503.02596, 2025.

\bibitem{Petr11}
A.~Petrunin.
\newblock Alexandrov meets {L}ott-{V}illani-{S}turm.
\newblock {\em M\"{u}nster J. Math.}, 4:53--64, 2011.

\bibitem{Sav:22}
G.~Savar\'{e}.
\newblock Sobolev spaces in extended metric-measure spaces.
\newblock In {\em New trends on analysis and geometry in metric spaces}, volume 2296 of {\em Lecture Notes in Math.}, pages 117--276. Springer, Cham, [2022] \copyright 2022.

\bibitem{Sha:00}
N.~Shanmugalingam.
\newblock Newtonian spaces: an extension of {S}obolev spaces to metric measure spaces.
\newblock {\em Rev. Mat. Iberoamericana}, 16(2):243--279, 2000.

\bibitem{Stu:06I}
K.-T. Sturm.
\newblock On the geometry of metric measure spaces.
\newblock {\em Acta Mathematica}, 196(1):65--131, 2006.

\bibitem{Tur:Mil:Muk:Har:14}
K.~Turner, Y.~Mileyko, S.~Mukherjee, and J.~Harer.
\newblock Fr\'echet means for distributions of persistence diagrams.
\newblock {\em Discrete Comput. Geom.}, 52(1):44--70, 2014.

\bibitem{ZhangZhu10}
H.-C. Zhang and X.-P. Zhu.
\newblock Ricci curvature on {A}lexandrov spaces and rigidity theorems.
\newblock {\em Comm. Anal. Geom.}, 18(3):503--553, 2010.

\end{thebibliography}
%
%
\end{document}